\providecommand{\U}[1]{\protect\rule{.1in}{.1in}}
\newtheorem{theorem}{Theorem}[section]
\newtheorem{corollary}{Corollary}[section]
\newtheorem{definition}{Definition}[section]
\newtheorem{lemma}{Lemma}[section]
\newtheorem{proposition}{Proposition}[section]
\newtheorem{remark}{Remark}[section]
\newcommand{\halmos}{{\mbox{\, \vspace{3mm}}} \hfill
\mbox{$\Box$}}
\newenvironment{proof}[1][Proof]{\noindent\textbf{#1.} }{\halmos}
\numberwithin{equation}{section}
\newcommand{\ind}{1\hspace{-2.1mm}{1}}
\newcommand{\be}{\begin{equation}}
\newcommand{\ee}{\end{equation}}
\newcommand{\bq}{\begin{eqnarray}}
\newcommand{\eq}{\end{eqnarray}}
\newcommand{\WW}{\mathcal{W}}
\newcommand{\bo}{\boldsymbol}
\newcommand{\Lpq}{\bo{\Lambda}_{q}^{+}}
\newcommand{\Lmq}{\bo{\Lambda}_{q}^{-}}
\newcommand{\Lmgd}{\bo{\Lambda}_{\gamma_{0}+\delta}^{-}}
\newcommand{\Lpgd}{\bo{\Lambda}_{\gamma_{0}+\delta}^{+}}
\newcommand{\Lp}{\bo{\Lambda}^{+}}
\newcommand{\Lm}{\bo{\Lambda}^{-}}
\newcommand{\WWomega}{\WW^{(\omega)}}
\begin{document}
\author{Irmina Czarna$^a$, Adam Kaszubowski$^b$, Shu Li$^c$ and Zbigniew Palmowski$^a$\footnote{Corresponding author: \texttt{zbigniew.palmowski@pwr.edu.pl}}}
\title{Fluctuation identities for omega-killed Markov additive processes and dividend problem}
\date{{\small \today}}
\maketitle

\noindent \thanks{$^a$Faculty of Pure and Applied Mathematics, Hugo Steinhaus Centre, Wroc{\l}aw University of Science and Technology, Poland\\$^b$Mathematical Institute, University of Wroc{\l}aw, Poland
\\$^c$Department of Mathematics, University of Illinois at Urbana-Champaign, USA
}

\begin{abstract}
In this paper we solve the exit problems for an one-sided Markov additive process (MAP) which is exponentially killed with a bivariate
killing intensity $\omega(\cdot,\cdot)$ dependent on the present level of the process and the present state of the environment. Moreover, we analyze respective resolvents.
All identities are given in terms of new generalizations of classical scale matrices for the MAP.
We also remark on a number of applications of the obtained identities to (controlled) insurance risk processes.
In particular, we show that our results can be applied to the so-called Omega model,
where bankruptcy occurs at rate $\omega(\cdot,\cdot)$ when the surplus process becomes negative.
Finally, we consider the Markov modulated Brownian motion (MMBM) and present the results for the particular choice
of piecewise intensity function $\omega(\cdot,\cdot)$.

\vspace{3mm}

\noindent {\sc Keywords:} Markov modulation, Omega model, Potential measures, Fluctuation theory, Dividends.

\end{abstract}
 \vspace{3mm}

\section{Introduction}
In the fields of risk theory, financial mathematics, environmental problems, queueing and so forth, there are various applications of a Markov additive process (MAP) which
in continuous time is a natural generalization of a L\'evy process (see, e.g., \cite{AS03,LB12, DIKM10,DM09,IM10}).
Furthermore, MAP can be seen as a L\'evy process in Markov environment, which provides
rich modeling possibilities. This paper solves exit problems for spectrally negative MAP which is exponentially killed with a bivariate
killing intensity $\omega(\cdot,\cdot)$ dependent on the present states of the process and the environment. Moreover, we analyze respective $\omega$-killed resolvents.
Recently, Li and Palmowski \cite{LP16} investigated $\omega$-killed exit identities and resolvents for a general (reflected)
spectrally negative L\'evy process. This paper generalized their results to Markov additive framework.

Before entering our discussion of this subject, we shall begin by defining the class of processes we intend to work with.
Let $(\Omega,\mathcal{F},\mathbf{F}, \mathbb{P})$ be a filtrated probability space, with filtration
$\mathbf{F} = \lbrace \mathcal{F}_t : t \geq 0 \rbrace$ which satisfies usual conditions.
Throughout this article, we will consider a bivariate process $(X,J)=\{(X_t,J_t)\}_{t\ge0}$ such that $X$ is a real-valued c\'adl\'ag $($right-continuous with left limits$)$
process and $J$ is a right-continuous jump process with a finite state space $E = \lbrace 1,2,...,N\rbrace$.
We say that $(X,J)$ is a MAP if, given $\lbrace J_t = i \rbrace$, the vector $(X_{t+s}-X_{t},J_{t+s})$
is independent of $\mathcal{F}_t$ and has the same law as $(X_s - X_0, J_s)$ given $\lbrace J_0 = i \rbrace$ for all $s,t \geq 0$ and $i \in E$.
Usually $X$ is called an additive component and $J$ is a background process representing the environment.
Moreover, we find the following representation of every MAP important. Straightforward conclusion from the definition gives that $J$ is a Markov chain.
Furthermore, one can observe that process $X$ evolves as some L\'evy process $X^{i}$ when $J$ is in state $i$.
In addition, when $J$ transits to state $j \neq i$, process $X$ jumps according to the distribution of the random variable $U_{ij}$, where $i,j \in E$. All above components are assumed to be independent.
The above structure explains why the another name for MAP is ``Markov-modulated L\'evy process".
Furthermore, let us remark that when $J$ lives on a single state, $X$ reduces to a L\'evy process.
Throughout this paper we assume that process $X$ has no positive jumps, thus $X^i$ is a spectrally negative L\'evy process and $U_{ij} \leq 0$ a.s.\ (for every $i,j \in E$).
We exclude the case when $X$ has monotone paths. We assume that $J$ is an irreducible Markov chain, with
$\bo{Q}$ being its transition probability matrix and $\bo{\pi}$ being its unique stationary vector.

One of the main contributions of this paper is the identification of the so-called $\omega$-scale matrices $\mathcal{W}^{(\omega)}$ and
$\mathcal{Z}^{(\omega)}$, which appear as the solutions of exit problems for MAPs with $\omega$-killing. Moreover, it is shown that these new generalizations of scale
matrices are solutions to some integral equations. In the case where the killing intensity $\omega$ is constant for every $i \in E$ and $x \in \mathbb{R}$, our results are consistent with the classical exit identities and resolvents obtained in \cite{IP12} and \cite{I14} in terms of the so-called scale matrices.

The paper is organized as follows. Section \ref{Prel} recalls some basic definitions and properties of MAPs and also introduces the
bivariate $\omega$-function and $\omega$-killed exit problems. In Section \ref{Main}, we formally define $\omega$-scale
matrices and present our main results. In Section \ref{Div}, we apply our results to find the value function for dividends paid until
ruin in the so-called Omega model. Section \ref{sec: examples} is dedicated to the analysis of some particular examples of $\omega$. Further numerical computations are provided. Finally, we postpone the existence of $\omega$-killed scale matrices as well as the proofs of the main results in Appendixes \ref{Lemma2.1} and \ref{Proof Main} for conciseness.

\section{Preliminaries}\label{Prel}
In this section, we present some basic definitions and properties of MAPs. Let $\mathbf{F}(\alpha)$ be the matrix analogue of the Laplace exponent of the spectrally negative L\'evy process, namely
\[
\mathbb{E}\left( e^{\alpha X_t}, J_t=j|J_0=i\right)=\left(e^{\mathbf{F}(\alpha)t} \right)_{ij}, \quad \textrm{for $\alpha \geq 0$},
\]
which has an explicit representation
\[
\mathbf{F}(\alpha)=\text{diag}(\psi_1(\alpha),...,\psi_N(\alpha))+\mathbf{Q}\circ \mathbb{E}(e^{\alpha U_{ij}}).
\]
Recall that $\mathbf{Q}$ is the $N\times N$ transition rate matrix of $J$. Further we denote by $\psi_{i}$ the Laplace exponent of L\'evy process
$X^{i}$ when $J_t=i$ $($i.e., $\mathbb{E}( e^{\alpha X^i_t})=e^{\psi_{i}(\alpha)t}$$)$, and $\bo{A} \circ \bo{B}=(a_{ij}b_{ij})$ stands for entry-wise $($Hadamard$)$ matrix product.
Note that $\mathbf{F}(0)$ is the transition rate matrix of $J$, and hence our MAP is non-defective if and only if $\mathbf{F}(0) \vec{1}= \vec{{0}}$, where $\vec{{0}}$ and $\vec{1}$ denote the (column) vectors of $0$s and $1$s respectively (whereas the identity and the zero matrices are denoted by $\mathbf{I}$ and $\mathbf{0}$ respectively.)
Throughout this article, the law of $(X,J)$ such that $X_0 = x$ and $J_0 = i$ is denoted by $\mathbb{P}_{x,i}$ and its expectation by
$\mathbb{E}_{x,i}$. We will also use equivalently $\mathbb{E}_x[\cdot| J_0 = i]$ for $\mathbb{E}_{x,i}[\cdot]$ to emphasis the starting state. When $x =0$, we will write $\mathbb{P}(\cdot|J_0 = i)$ and $\mathbb{E}[\cdot|J_0  =i]$ respectively.
For a stopping time $\kappa$, the notation
$\mathbb{E}_x[\cdot,J_{\kappa}|J_0]$ is used to denote a $N \times N$ matrix whose $(i,j)$ entry equals to $\mathbb{E}_{x}[\cdot, J_{\kappa} = j|J_0=i]$.

In the study of exit problems of spectrally negative MAPs, the so-called scale matrices play an essential role, which can be defined analogously as the scale functions of spectrally negative L\'evy processes. First, let us define the first passage times:
\[
\tau_{x}^{+} = \inf \lbrace t > 0 : X_t \geq x \rbrace,
\qquad
\tau_{x}^{-} = \inf \lbrace t > 0 : X_t < x \rbrace.
\]
From Kyprianou and Palmowski \cite{KP08}, for $q \geq 0$, there exists a continuous, invertible matrix
function $\bo{W}^{(q)}:[0,\infty) \rightarrow \mathbb{R}^{N \times N}$ such that for all $0 \leq x \leq a$,
\begin{equation}\label{two-sided_W}
\mathbb{E}_x \Bigl[ e^{-q \tau_a^{+}}, \tau_a^{+} < \tau_0^{-}, J_{\tau_{a}^{+}}|J_0\Bigr] = \bo{W}^{(q)}(x)\bo{W}^{(q)}(a)^{-1}.
\end{equation}
Moreover, Ivanovs \cite{I11} and Ivanovs and Palmowski \cite{IP12} showed that $\bo{W}^{(q)}$ can be characterized by
\begin{equation}\label{matrix_W_def}
\widetilde{\mathbf{W}}^{(q)}(\alpha)=\left( \mathbf{F}(\alpha)-q\mathbf{I}\right)^{-1}, \quad \textrm{for large enough $\alpha$,}
\end{equation}
where $\widetilde{f}(\alpha)= \int_0^{\infty} e^{-\alpha x} f(x)  dx$ denotes the Laplace transform of the matrix function $f$.
Furthermore, the domain of $\bo{W}^{(q)}$ can extended to the negative half line by taking $\mathbf{W}^{(q)}(x) = \mathbf{0}$ for $x < 0$.
The basis of the above transform lies a probabilistic construction of the scale matrix $\mathbf{W}^{(q)}$ which
involves the first hitting time at level $x$ and can be written as
\begin{equation}
\nonumber
\mathbf{W}^{(q)}(x)= e^{-\mathbf{\Lambda}^q x} \mathbf{L}^{q}(x),
\end{equation}
where $\mathbf{\Lambda}^q$ is the transition rate matrix of Markov chain $\{J_{\tau_x^+}\}_{x\geq 0}$, i.e.,
$\mathbb{P}(\tau_x^+<\textrm{e}_q, J_{\tau_x^+})=e^{\mathbf{\Lambda}^q x}$ with $\textrm{e}_q$ being an
independent exponential random variable of rate $q > 0$.
Moreover, $\mathbf{L}^q(x)$ is a matrix of expected occupation times at $0$ up to the first passage time over $x$.
In addition, the matrix $\mathbf{L}^q:=\mathbf{L}^q(\infty)$ is the expected occupation density at $0$ and it
is known that $\mathbf{L}^q$ has finite entries and is invertible unless the process is non-defective and $\bo{\pi}\mathbb{E}[X_1, J_1|J_0]\vec{1}=0$ (see \cite{IP12}).
Hence, we have
\begin{equation}\label{lim_W}
\lim_{x \to \infty}e^{\mathbf{\Lambda}^q x}\mathbf{W}^{(q)}(x)=
\lim_{x \to \infty}\mathbf{W}^{(q)}(x) e^{\mathbf{R}^q x}=\mathbf{L}^{q},
\end{equation}
where the matrix 
$\mathbf{R}^q:=\left(\mathbf{L}^{q}\right)^{-1}\mathbf{\Lambda}^q \mathbf{L}^{q}.$
Moreover, it is easy to see that $\lim_{a \to \infty}\mathbf{W}^{(q)}(a)^{-1}=\mathbf{0}$,
since the Expectation (\ref{two-sided_W}) tends to $\mathbf{0}$ when $a \to \infty$, therefore, from the above argument,
\begin{equation}
\nonumber
\lim_{x \to \infty}e^{\mathbf{\Lambda}^q x}= \lim_{x \to \infty}\mathbf{W}^{(q)}(x)^{-1}\mathbf{L}^{q}(x)=\mathbf{0}.
\end{equation}
The second scale matrix $\mathbf{Z}^{(q)}$ is then defined through the $\mathbf{W}^{(q)}$ matrix function:
\[
\mathbf{Z}^{(q)}(x)=\mathbf{I}-\int_0^{x}\mathbf{W}^{(q)}(y)dy \; (\mathbf{F}(0)-q\mathbf{I}).
\]
Note that $\mathbf{Z}^{(q)}(x)$ is continuous in $x$ with $\mathbf{Z}^{(q)}(0)=\mathbf{I}$.
Furthermore,
\begin{equation}
\nonumber
\lim_{x \to \infty}e^{\mathbf{\Lambda}^q x}\mathbf{Z}^{(q)}(x)=\int_0^{\infty} e^{\mathbf{\Lambda}^q z} dz\mathbf{L}^{q} \; (q\mathbf{I}-\mathbf{F}(0)).
\end{equation}
\begin{remark}\rm
For the case without exponential killing ($q=0$), the upper subscript $q$ will be omitted in all above quantities, which write as $\mathbf{W}(x), \mathbf{Z}(x), \mathbf{L}(x), \mathbf{\Lambda}$, etc.
\end{remark}
For more details of the scale matrices, we refer the reader to \cite{I14, IP12}.
\begin{definition}
Let $\omega : E \times \mathbb{R} \rightarrow \mathbb{R}^{+}$ be a function defined as $\omega(i,x) = \omega_{i}(x)$,
where for a fixed $i \in E$, $\omega_i : \mathbb{R} \rightarrow \mathbb{R}^{+}$ is a bounded, nonnegative measurable function and its
value formulates the matrix $\bo{\omega}(x):=\text{diag}(\omega_1(x),...,\omega_N(x))$.
Let $\lambda>0$ be the upper bound of $|\omega_i(x)|$ on $[0,\infty)$ for all $i \in E$.
\end{definition}
Our main interest of this paper is deriving closed-form formulas for the occupation times (up to some exit times), weighted by the $\omega$ function defined above. More specifically, for $d\leq x\leq c$ and $1\le i,j \le N$, we are interested in the expectation matrices
whose $(i,j)$-th elements are, respectively,
\[
\mathbb{E}_{x} \left[e^{-\int_0^{\tau_c^+}\omega_{J_s}(X_s)ds}, \tau_c^+<\tau_d^-, J_{\tau_c^+}=j | J_0 = i \right] \text{ and } \mathbb{E}_{x} \left[e^{-\int_0^{\tau_d^-}\omega_{J_s}(X_s)ds},\tau_d^- < \tau_c^+, J_{\tau_d^-}=j | J_0 = i \right].
\]
Further discussions about applications with some particular $\omega$ will be presented in Section~\ref{sec: examples}.

\section{Main results}\label{Main}

\subsection{Omega scale matrices}

Before presenting our main results, we shall devote a little time to establishing some further notations.
Our main aim is to represent fluctuation identities for MAPs with $\omega$-killing in the terms of new $\omega$-scale matrices defined as the unique
solutions of the following equations:
\begin{align}
\mathcal{W}^{(\omega)}(x)&=\mathbf{W}(x)+ \mathbf{W}*\left(\boldsymbol{\omega}\mathcal{W}^{(\omega)}\right)(x), \label{W}\\
\mathcal{Z}^{(\omega)}(x)&=\mathbf{I}+ \mathbf{W}*\left(\boldsymbol{\omega}\mathcal{Z}^{(\omega)}\right)(x),\nonumber
\end{align}
where $f*g(x)= \int_0^{x} f(x-y) g(y) dy$ denotes the convolution of two matrix functions $f$ and~$g$.
The following lemma shows that the above $\omega$-scale matrices $\mathcal{W}^{(\omega)}$ and $\mathcal{Z}^{(\omega)}$ are well-defined and exist uniquely (see Appendix \ref{Lemma2.1} for the proof).

\begin{lemma} \label{lem unique} For every $i,j \in E$, let us assume that ${h}_{ij}$ is a locally bounded function and $\omega_i$ is a bounded function on $\mathbb{R}$. There exists an unique solution to the following equation:
\begin{equation}
\mathbf{H}(x)=\mathbf{h}(x)+\mathbf{W}*\left( \boldsymbol{\omega}\mathbf{H}\right)(x),\label{Hh}
\end{equation}
where $\mathbf{H}(x)=\mathbf{h}(x)$ for $x<0$. Furthermore, for any fixed $\delta>0$, $\mathbf{H}$ satisfies (\ref{Hh}) if and only if $\mathbf{H}$ satisfies:
\begin{equation}
\mathbf{H}(x)=\mathbf{h}_{\delta}(x)+ \mathbf{W}^{(\delta)}*\left((\boldsymbol{\omega}-\delta\mathbf{I})\mathbf{H}\right)(x), \label{Hhdelta}
\end{equation}
where $\mathbf{h}_{\delta}(x)=\mathbf{h}(x)+\delta \mathbf{W}^{(\delta)}*\mathbf{h}(x)$.
 \end{lemma}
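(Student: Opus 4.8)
The plan is to read (\ref{Hh}) and (\ref{Hhdelta}) as linear Volterra integral equations of the second kind and to solve them by Picard iteration: the convolution with $\mathbf{W}$ produces the familiar factorial gain, so the iteration converges on every compact interval. Fix $a>0$ and work with matrix functions on $[0,a]$ under the supremum norm $\|\cdot\|_a$, where $\|\cdot\|$ is a fixed submultiplicative matrix norm. Since $\mathbf{W}$ is continuous on $[0,\infty)$ it is bounded by some $w_a$ on $[0,a]$, and $\|\boldsymbol{\omega}(x)\|\le\lambda$ for all $x$. For the affine operator $\mathcal{T}\mathbf{H}:=\mathbf{h}+\mathbf{W}*(\boldsymbol{\omega}\mathbf{H})$ and $0\le x\le a$,
\[
\|\mathcal{T}\mathbf{H}_1(x)-\mathcal{T}\mathbf{H}_2(x)\|\le w_a\lambda\int_0^x\|\mathbf{H}_1(y)-\mathbf{H}_2(y)\|\,dy ,
\]
and iterating this bound gives $\|\mathcal{T}^n\mathbf{H}_1-\mathcal{T}^n\mathbf{H}_2\|_a\le (w_a\lambda a)^n\|\mathbf{H}_1-\mathbf{H}_2\|_a/n!$. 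Hence a power of $\mathcal{T}$ is a contraction, so $\mathcal{T}$ has a unique fixed point on $[0,a]$; equivalently, the Neumann iteration $\mathbf{H}_0=\mathbf{h}$, $\mathbf{H}_{k+1}=\mathcal{T}\mathbf{H}_k$ converges uniformly on $[0,a]$ to a locally bounded solution. Solutions on $[0,a]$ and $[0,a']$ with $a<a'$ must agree on $[0,a]$, so one obtains a unique solution on $[0,\infty)$; the prescription $\mathbf{H}(x)=\mathbf{h}(x)$ for $x<0$ is only a convention, since the convolution in (\ref{Hh}) integrates over $[0,x]$ and $\mathbf{W}\equiv\mathbf{0}$ on $(-\infty,0)$. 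The identical argument, now with $\mathbf{W}$ replaced by $\mathbf{W}^{(\delta)}$ (continuous, hence locally bounded), $\boldsymbol{\omega}$ by $\boldsymbol{\omega}-\delta\mathbf{I}$ (bounded), and $\mathbf{h}$ by $\mathbf{h}_\delta$ (locally bounded, being $\mathbf{h}$ plus the continuous term $\delta\,\mathbf{W}^{(\delta)}*\mathbf{h}$), shows that (\ref{Hhdelta}) has a unique solution too.

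The equivalence of the two equations rests on the resolvent-type identity for scale matrices,
\[
\mathbf{W}^{(\delta)}=\mathbf{W}+\delta\,\mathbf{W}^{(\delta)}*\mathbf{W},\qquad\delta>0 ,
\]
which follows by Laplace transforms: using (\ref{matrix_W_def}) and $A^{-1}-B^{-1}=A^{-1}(B-A)B^{-1}$ with $A=\mathbf{F}(\alpha)-\delta\mathbf{I}$ and $B=\mathbf{F}(\alpha)$ one gets, for $\alpha$ large,
\[
\widetilde{\mathbf{W}}^{(\delta)}(\alpha)-\widetilde{\mathbf{W}}(\alpha)=(\mathbf{F}(\alpha)-\delta\mathbf{I})^{-1}-\mathbf{F}(\alpha)^{-1}=\delta\,(\mathbf{F}(\alpha)-\delta\mathbf{I})^{-1}\mathbf{F}(\alpha)^{-1}=\delta\,\widetilde{\mathbf{W}}^{(\delta)}(\alpha)\widetilde{\mathbf{W}}(\alpha) ,
\]
and two continuous matrix functions of at most exponential growth sharing a Laplace transform coincide on $[0,\infty)$; this identity is also recorded in \cite{IP12,I14}.

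Granting this, suppose first that $\mathbf{H}$ solves (\ref{Hh}), so $\mathbf{h}=\mathbf{H}-\mathbf{W}*(\boldsymbol{\omega}\mathbf{H})$. Expanding $\mathbf{h}_\delta$, distributing the convolution, substituting this expression for $\mathbf{h}$ in the term $\delta\,\mathbf{W}^{(\delta)}*\mathbf{h}$, cancelling the resulting $\pm\delta\,\mathbf{W}^{(\delta)}*\mathbf{H}$ contributions, and using associativity of matrix convolution together with the fact that the scalar $\delta$ commutes with every matrix, one computes
\begin{align*}
\mathbf{h}_\delta+\mathbf{W}^{(\delta)}*\big((\boldsymbol{\omega}-\delta\mathbf{I})\mathbf{H}\big)
&=\mathbf{h}+\delta\,\mathbf{W}^{(\delta)}*\mathbf{h}+\mathbf{W}^{(\delta)}*(\boldsymbol{\omega}\mathbf{H})-\delta\,\mathbf{W}^{(\delta)}*\mathbf{H}\\
&=\mathbf{h}-\delta\,\mathbf{W}^{(\delta)}*\mathbf{W}*(\boldsymbol{\omega}\mathbf{H})+\mathbf{W}^{(\delta)}*(\boldsymbol{\omega}\mathbf{H})\\
&=\mathbf{h}+\big(\mathbf{W}^{(\delta)}-\delta\,\mathbf{W}^{(\delta)}*\mathbf{W}\big)*(\boldsymbol{\omega}\mathbf{H})\\
&=\mathbf{h}+\mathbf{W}*(\boldsymbol{\omega}\mathbf{H})=\mathbf{H} ,
\end{align*}
so $\mathbf{H}$ solves (\ref{Hhdelta}). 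The converse follows from uniqueness: the unique solution of (\ref{Hh}), just shown to solve (\ref{Hhdelta}), must be the unique solution of (\ref{Hhdelta}), so the two equations have the same solution. Alternatively, running the same manipulations backwards shows that any solution $\mathbf{H}$ of (\ref{Hhdelta}) makes $\mathbf{G}:=\mathbf{H}-\mathbf{h}-\mathbf{W}*(\boldsymbol{\omega}\mathbf{H})$ satisfy the homogeneous equation $\mathbf{G}=-\delta\,\mathbf{W}^{(\delta)}*\mathbf{G}$, whence $\mathbf{G}\equiv\mathbf{0}$ by the factorial estimate of the first paragraph.

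I expect the only delicate points to be bookkeeping ones: keeping the order of the matrix factors inside the iterated convolutions straight (matrices do not commute, though $\delta$ does), and justifying that equality of Laplace transforms for large $\alpha$ forces the scale-matrix identity on all of $[0,\infty)$ — which uses the continuity and exponential growth bound of $\mathbf{W}^{(q)}$ afforded by its probabilistic representation $\mathbf{W}^{(q)}(x)=e^{-\mathbf{\Lambda}^q x}\mathbf{L}^q(x)$. Everything else is the standard Volterra/Picard machinery.
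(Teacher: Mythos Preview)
Your proof is correct, but it takes a genuinely different route from the paper's. For existence and uniqueness the paper works entirely in the Laplace domain: uniqueness is obtained by taking transforms of the homogeneous equation and then proving, via a block-matrix induction on the dimension of $\mathbf{F}(\alpha)$, that $\|\mathbf{F}(\alpha)^{-1}\|<\tfrac{1}{2\lambda}$ for large $\alpha$; existence is shown by building a Cauchy sequence of Laplace transforms under an operator $\mathcal{G}$ and inverting. You bypass all of this with the classical Volterra/Picard argument and the factorial bound $\|\mathcal{T}^n\mathbf{H}_1-\mathcal{T}^n\mathbf{H}_2\|_a\le (w_a\lambda a)^n/n!\cdot\|\mathbf{H}_1-\mathbf{H}_2\|_a$, which is more elementary, entirely self-contained, and avoids any analysis of $\mathbf{F}(\alpha)^{-1}$. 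The paper's approach, on the other hand, has the advantage that the Laplace-domain machinery it sets up is reused elsewhere in the article. For the equivalence of (\ref{Hh}) and (\ref{Hhdelta}) both proofs rest on the same resolvent identity $\mathbf{W}^{(\delta)}-\mathbf{W}=\delta\,\mathbf{W}^{(\delta)}*\mathbf{W}$; the paper convolves (\ref{Hh}) on the left by $\delta\mathbf{W}^{(\delta)}$ and adds, whereas you substitute $\mathbf{h}=\mathbf{H}-\mathbf{W}*(\boldsymbol{\omega}\mathbf{H})$ directly into the right-hand side of (\ref{Hhdelta}) --- the algebra is the same. Your treatment of the converse direction (via uniqueness of both equations, or via the homogeneous equation $\mathbf{G}=-\delta\,\mathbf{W}^{(\delta)}*\mathbf{G}$) is in fact cleaner than the paper's terse ``by letting $\delta=0$''.
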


We further introduce more general scale matrices $\mathcal{W}^{(\omega)}(x,y)$ and $\mathcal{Z}^{(\omega)}(x,y)$ to allow shifting:
\begin{align}
\mathcal{W}^{(\omega)}(x,y)&=\mathbf{W}(x-y)+\int_{y}^{x} \mathbf{W}(x-z) \boldsymbol{\omega}(z)\mathcal{W}^{(\omega)}(z,y)dz, \label{Woxy}\\
\mathcal{Z}^{(\omega)}(x,y)&=\mathbf{I}+\int_{y}^{x} \mathbf{W}(x-z)\boldsymbol{\omega}(z)\mathcal{Z}^{(\omega)}(z,y)dz. \label{Zoxy}
\end{align}

Also note that $\mathcal{W}^{(\omega)}(x,0)=\mathcal{W}^{(\omega)}(x)$, $\mathcal{Z}^{(\omega)}(x,0)=\mathcal{Z}^{(\omega)}(x)$, as well as
\begin{equation}
\mathcal{W}^{(\omega^*)}(x-y)=\mathcal{W}^{(\omega)}(x,y),\quad \mbox{and}\quad \mathcal{Z}^{(\omega^*)}(x-y)=\mathcal{Z}^{(\omega)}(x,y), \label{shiftw}
\end{equation}
with $\omega^*(\cdot,z)=\omega(\cdot,z+y)$, and  

Since $ \mathbf{W}^{(\delta)}- \mathbf{W}= \delta \mathbf{W}^{(\delta)}* \mathbf{W}$ and $ \mathbf{Z}^{(\delta)}- \mathbf{Z}=
\delta \mathbf{W}^{(\delta)}* \mathbf{Z}$, it is easy to check that
\begin{align}
\mathcal{W}^{(\omega)}(x,y)&=\mathbf{W}^{(\delta)}(x-y)+\int_{y}^{x} \mathbf{W}^{(\delta)}(x-z) (\boldsymbol{\omega}(z)-\delta\mathbf{I})\mathcal{W}^{(\omega)}(z,y)dz, \label{Wxyd}\\
\mathcal{Z}^{(\omega)}(x,y)&=\mathbf{Z}^{(\delta)}(x-y)+\int_{y}^{x} \mathbf{W}^{(\delta)}(x-z)(\boldsymbol{\omega}(z)-\delta\mathbf{I})\mathcal{Z}^{(\omega)}(z,y)dz.\label{Wxyd2}
\end{align}

To solve the one-sided upward problem (i.e., to get Corollary \ref{One-sided} (i)) we have to assume additionally that
\begin{equation}\label{omega_for_corollary}
\omega_i(x) \equiv \beta \geq 0, \quad \textrm{for all } x\leq 0 \textrm{ and } i \in E.
\end{equation}
Hence we define a matrix function $\mathcal{H}^{(\omega)}$ which satisfies the following integral equation
\begin{equation}\label{matrixH}
\mathcal{H}^{(\omega)}(x)= e^{-\mathbf{R}^{\beta}x}+\int_{0}^{x} \mathbf{W}^{(\beta)}(x-z) (\boldsymbol{\omega}(z)-\beta\mathbf{I})\mathcal{H}^{(\omega)}(z)dz.
\end{equation}


%
%
%
%
%
%
%
%

\subsection{Exit problems and resolvents}
In this section, we establish our main results of fluctuation identities and resolvents for spectrally negative $\omega$-killed MAPs.
The proofs of the below theorems and corollary are postponed to Appendix \ref{Proof Main}, since the arguments tend to be technical,
and the results intuitively hold in a similar manner as presented in \cite{LP16}.

\begin{theorem} \textbf{(Two-sided exit problem)}\label{Two-sided}\\
For invertible matrix function $\mathcal{W}^{(\omega)}$ and for
$\mathcal{Z}^{(\omega)}$ given in (\ref{Woxy}) and (\ref{Zoxy}) respectively, the following hold:
\begin{itemize}
\item[(i)] For $d \leq x \leq c$,
\begin{equation*}
\mathbf{A}^{(\omega)}_d(x,c):=\mathbb{E}_{x} \left[e^{-\int_0^{\tau_c^+}\omega_{J_s}(X_s)ds}, \tau_c^+<\tau_d^-, J_{\tau_c^+}|J_0 \right]
=\mathcal{W}^{(\omega)}(x,d) \mathcal{W}^{(\omega)}(c,d)^{-1}.
\end{equation*}
\item[(ii)] For $d \leq x \leq c$, \begin{align*}
\mathbf{B}^{(\omega)}_d(x,c)&:=\mathbb{E}_{x} \left[e^{-\int_0^{\tau_d^-}\omega_{J_s}(X_s)ds},\tau_d^- < \tau_c^+, J_{\tau_d^-}|J_0 \right]\\
&\ =\mathcal{Z}^{(\omega)}(x,d)-\mathcal{W}^{(\omega)}(x,d) \mathcal{W}^{(\omega)}(c,d)^{-1}\mathcal{Z}^{(\omega)}(c,d).
\end{align*}
\end{itemize}
\end{theorem}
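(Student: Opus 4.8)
The plan is to adapt the classical martingale / perturbation argument for omega-killed scale functions in the spectrally negative L\'evy setting (as in \cite{LP16}) to the matrix-valued MAP framework, exploiting the shifting identity (\ref{shiftw}) to reduce to the case $d=0$. First I would verify the boundary behavior: using (\ref{Woxy})--(\ref{Zoxy}) one has $\mathcal{W}^{(\omega)}(d,d)=\mathbf{W}(0)$ and $\mathcal{Z}^{(\omega)}(d,d)=\mathbf{I}$, and by construction $\mathcal{W}^{(\omega)}(x,d)=\mathbf{0}$ for $x<d$ (since $\mathbf{W}$ vanishes on the negative half-line), so both candidate expressions have the correct form at $x=c$ and the right jump structure at $x=d$. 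This confirms $\mathbf{A}^{(\omega)}_d(c,c)=\mathbf{I}$ and $\mathbf{B}^{(\omega)}_d(d,d)=\mathbf{I}$, matching the left-hand sides.

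Next I would establish the identities first under the extra assumption that $\boldsymbol{\omega}(\cdot)\equiv\delta\mathbf{I}$ is constant, where $\mathbf{A}^{(\omega)}_d(x,c)=\mathbf{W}^{(\delta)}(x-d)\mathbf{W}^{(\delta)}(c-d)^{-1}$ and $\mathbf{B}^{(\omega)}_d(x,c)=\mathbf{Z}^{(\delta)}(x-d)-\mathbf{W}^{(\delta)}(x-d)\mathbf{W}^{(\delta)}(c-d)^{-1}\mathbf{Z}^{(\delta)}(c-d)$; these are precisely the classical two-sided exit identities for MAPs from \cite{IP12,I14}, so nothing new is needed there. Then I would treat the general bounded $\boldsymbol{\omega}$ by a change-of-measure / iteration argument: condition on the successive times at which the ``extra'' killing clock (run at the constant rate $\delta$ that dominates $\omega$) would have fired but the actual $\omega$-clock does not, i.e. write
\begin{equation*}
e^{-\int_0^{t}\omega_{J_s}(X_s)ds}=e^{-\delta t}+\int_0^{t}(\delta-\omega_{J_s}(X_s))\,e^{-\int_0^{s}\omega_{J_u}(X_u)du}\,e^{-\delta(t-s)}\,ds,
\end{equation*}
apply it with $t=\tau_c^+\wedge\tau_d^-$, take expectations, use the strong Markov property at time $s$ together with the constant-rate result, and recognize the resulting integral equation as exactly (\ref{Wxyd}) (respectively (\ref{Wxyd2})) for the matrix $\mathbf{A}^{(\omega)}_d(\cdot,c)$ composed with $\mathcal{W}^{(\omega)}(c,d)^{-1}$ on the right (respectively for $\mathbf{B}^{(\omega)}_d$). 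By the uniqueness statement in Lemma \ref{lem unique} (applied with $\mathbf{h}(x)=\mathbf{W}^{(\delta)}(x-d)\mathcal{W}^{(\omega)}(c,d)^{-1}$, which is locally bounded, and with the $\delta$-version of the equation), the solution must coincide with $\mathcal{W}^{(\omega)}(x,d)\mathcal{W}^{(\omega)}(c,d)^{-1}$; similarly for part (ii) with the appropriate $\mathbf{h}$.

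For part (ii) I would alternatively, or as a consistency check, use the decomposition
\begin{equation*}
\mathbf{B}^{(\omega)}_d(x,c)=\mathbb{E}_x\!\left[e^{-\int_0^{\tau_d^-\wedge\tau_c^+}\omega_{J_s}(X_s)ds},J_{\tau_d^-\wedge\tau_c^+}\,\middle|\,J_0\right]\,\mathbf{1}_{\{\tau_d^-<\tau_c^+\}}\text{-part}
\end{equation*}
together with the fact that $\mathcal{Z}^{(\omega)}(x,d)$ is, up to the $\omega$-potential correction, the value of the full (no upper barrier) occupation functional, so that subtracting the overshoot contribution through $c$ — which by the strong Markov property at $\tau_c^+$ factors as $\mathbf{A}^{(\omega)}_d(x,c)$ times $\mathcal{Z}^{(\omega)}(c,d)$ — yields the claimed formula. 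The main obstacle I anticipate is the careful justification of the integral-equation manipulation in the matrix setting: one must check integrability and the interchange of expectation and the $ds$-integral (using the uniform bound $\lambda$ on $\omega$ and local boundedness of $\mathbf{W}$), handle the left limit $J_{\tau_d^-}$ correctly since $X$ jumps downward by $U_{ij}$ at state changes of $J$, and ensure the resulting equation is genuinely of the form covered by Lemma \ref{lem unique} so that uniqueness applies; the invertibility hypothesis on $\mathcal{W}^{(\omega)}$ is used only to write the answer in the stated quotient form, exactly as in the $q$-scale case (\ref{two-sided_W}).
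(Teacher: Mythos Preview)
Your overall plan---reduce to $d=0$, start from the constant-rate identities, perturb to general $\boldsymbol{\omega}$, and invoke Lemma \ref{lem unique}---is the paper's strategy, and your informal description (conditioning on the firing times of a dominating rate-$\delta$ clock) is exactly the marked Poisson thinning the paper uses. But the displayed Duhamel identity is oriented the wrong way for the argument you announce: with $e^{-\int_0^s\omega}$ on $[0,s]$ and $e^{-\delta(t-s)}$ on $[s,t]$, the Markov property at $s$ does produce the constant-rate exit matrix for the future, but integrating over $s$ with the $\omega$-weight on the past yields the \emph{unknown} $\omega$-resolvent $\boldsymbol{U}^{(\omega)}_{(d,c)}$ acting on a known function, not a Volterra equation for $\mathbf{A}^{(\omega)}_d(\cdot,c)$, so there is nothing of the form (\ref{Wxyd}) to recognise. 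What your clock description actually encodes is the reversed identity
\[
e^{-\int_0^t\omega_{J_u}(X_u)\,du}=e^{-\delta t}+\int_0^t e^{-\delta s}\bigl(\delta-\omega_{J_s}(X_s)\bigr)\,e^{-\int_s^t\omega_{J_u}(X_u)\,du}\,ds,
\]
so that the Markov property at $s$ leaves $\mathbf{A}^{(\omega)}_d(X_s,c)$ on the future side and the $s$-integral pairs with the known $\delta$-resolvent on $(d,c)$ from \cite{I14}. Substituting that resolvent and separating the $\mathbf{W}^{(\delta)}(x-d)$ prefactor gives (this is the analytic counterpart of the paper's relation (\ref{relation})) an equation $\mathbf{A}^{(\omega)}_d(x,c)=\mathbf{W}^{(\delta)}(x-d)\mathbf{C}+\int_d^x\mathbf{W}^{(\delta)}(x-y)(\boldsymbol{\omega}(y)-\delta\mathbf{I})\mathbf{A}^{(\omega)}_d(y,c)\,dy$, whence Lemma \ref{lem unique} and the boundary value at $x=c$ force $\mathbf{C}=\mathcal{W}^{(\omega)}(c,d)^{-1}$. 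The paper, incidentally, proves the invertibility of $\mathcal{W}^{(\omega)}$ from this very representation rather than assuming it.

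For part (ii) your primary route inherits the same orientation issue. The paper proceeds somewhat differently: it first treats $c=\infty$, establishing a general resolvent identity $\mathbf{K}^{(\omega)}\boldsymbol{f}=\mathbf{K}^{(0)}(\boldsymbol{f}-\boldsymbol{\omega}\mathbf{K}^{(\omega)}\boldsymbol{f})$ and applying it with $\boldsymbol{f}=\boldsymbol{\omega}$ and lifetime $\tau_0^-$ (using the explicit one-sided potential density from \cite{I14}) to obtain $\mathbf{B}^{(\omega)}(x)=\mathcal{Z}^{(\omega)}(x)-\mathcal{W}^{(\omega)}(x)\mathbf{C}_{B^{(\omega)}}$; the finite-$c$ formula then follows from $\mathbf{B}^{(\omega)}(x,c)=\mathbf{B}^{(\omega)}(x)-\mathbf{A}^{(\omega)}(x,c)\mathbf{B}^{(\omega)}(c)$, which is precisely your ``consistency check'' in the last paragraph. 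So that alternative of yours is on target; only the displayed perturbation identity needs its two exponential weights swapped.
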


\begin{remark}\rm
When $d=0$, we use simplified notations:
$\mathbf{A}^{(\omega)}(x,c):=\mathbf{A}^{(\omega)}_0(x,c)$ and $\mathbf{B}^{(\omega)}(x,c):=\mathbf{B}^{(\omega)}_0(x,c)$.
\end{remark}

Now, taking the limits $d \to -\infty$ and $c \to \infty$ (as well as $d=$0) in Theorem \ref{Two-sided} (i) and (ii) respectively, we obtain the following corollary.

\begin{corollary} \textbf{(One-sided exit problem)}\label{One-sided}
\begin{itemize}
\item[(i)] Under the assumption (\ref{omega_for_corollary}), for $ x \leq c$, \begin{equation*}
\mathbb{E}_{x} \left[e^{-\int_0^{\tau_c^+}\omega_{J_s}(X_s)ds}, \tau_c^+<\infty, J_{\tau_c^+}|J_0 \right]
=\mathcal{H}^{(\omega)}(x) \mathcal{H}^{(\omega)}(c)^{-1},
\end{equation*}
for invertible matrix function $\mathcal{H}^{(\omega)}$ given in (\ref{matrixH}).
\item[(ii)] For $x \geq 0 $ and $\lambda>0$,
\begin{equation*}
\mathbb{E}_{x} \left[e^{-\int_0^{\tau_0^-}\omega_{J_s}(X_s)ds},\tau_0^- < \infty, J_{\tau_0^-}|J_0 \right]
=\mathcal{Z}^{(\omega)}(x)- \mathcal{W}^{(\omega)}(x)\mathbf{C}_{\mathcal{W}(\infty)^{-1}\mathcal{Z}(\infty)},
\end{equation*}
where matrix
\begin{equation*}
 \mathbf{C}_{\mathcal{W}(\infty)^{-1}\mathcal{Z}(\infty)}:=
 \lim_{c \to \infty} \mathcal{W}^{(\omega)}(c)^{-1}\mathcal{Z}^{(\omega)}(c)
\end{equation*}
exists and has finite entries.
\end{itemize}
\end{corollary}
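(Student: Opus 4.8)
The plan is to derive both parts by passing to a limit in Theorem~\ref{Two-sided}: part~(i) by letting the lower level $d\to-\infty$ in Theorem~\ref{Two-sided}(i), and part~(ii) by letting the upper level $c\to\infty$ in Theorem~\ref{Two-sided}(ii) with $d=0$. In each case the probabilistic side converges for soft reasons, and the work lies in identifying the analytic limit.

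For part~(i), since $X$ is c\'adl\'ag and real-valued we have $\inf_{t\le T}X_t>-\infty$ a.s.\ for every $T$, so $\tau_d^-\uparrow\infty$ a.s.\ as $d\downarrow-\infty$; hence $\{\tau_c^+<\tau_d^-,\,J_{\tau_c^+}=j\}\uparrow\{\tau_c^+<\infty,\,J_{\tau_c^+}=j\}$, and as the weight $e^{-\int_0^{\tau_c^+}\omega_{J_s}(X_s)ds}$ is nonnegative and bounded by $1$, monotone convergence gives, entrywise,
\[
\lim_{d\to-\infty}\mathbf{A}^{(\omega)}_d(x,c)=\mathbb{E}_{x}\left[e^{-\int_0^{\tau_c^+}\omega_{J_s}(X_s)ds},\,\tau_c^+<\infty,\,J_{\tau_c^+}\,|\,J_0\right].
\]
It then remains to show $\mathcal{W}^{(\omega)}(x,d)\mathcal{W}^{(\omega)}(c,d)^{-1}\to\mathcal{H}^{(\omega)}(x)\mathcal{H}^{(\omega)}(c)^{-1}$. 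For this I would use \eqref{Wxyd} with $\delta=\beta$: by assumption \eqref{omega_for_corollary} the matrix $\boldsymbol{\omega}(z)-\beta\mathbf{I}$ vanishes on $(-\infty,0]$, so $\mathcal{W}^{(\omega)}(0,d)=\mathbf{W}^{(\beta)}(-d)$ and, for $x\ge 0$,
\[
\mathcal{W}^{(\omega)}(x,d)=\mathbf{W}^{(\beta)}(x-d)+\int_0^x\mathbf{W}^{(\beta)}(x-z)\bigl(\boldsymbol{\omega}(z)-\beta\mathbf{I}\bigr)\mathcal{W}^{(\omega)}(z,d)\,dz .
\]
Normalising on the right by $\mathbf{W}^{(\beta)}(-d)^{-1}$ (invertible for $-d$ large) and letting $d\to-\infty$, the source $\mathbf{W}^{(\beta)}(x-d)\mathbf{W}^{(\beta)}(-d)^{-1}$ converges locally uniformly in $x$ by the asymptotics \eqref{lim_W}; dominated convergence inside the convolution (using that $\boldsymbol{\omega}$ is bounded and $\mathbf{W}^{(\beta)}$ locally bounded) then shows that $\mathcal{W}^{(\omega)}(\cdot,d)\mathbf{W}^{(\beta)}(-d)^{-1}$ converges locally uniformly to a matrix function solving \eqref{matrixH}, and the uniqueness in Lemma~\ref{lem unique} (applied through \eqref{Hhdelta}) identifies it as $\mathcal{H}^{(\omega)}$. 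Consequently
\[
\mathcal{W}^{(\omega)}(x,d)\mathcal{W}^{(\omega)}(c,d)^{-1}=\bigl[\mathcal{W}^{(\omega)}(x,d)\mathbf{W}^{(\beta)}(-d)^{-1}\bigr]\bigl[\mathcal{W}^{(\omega)}(c,d)\mathbf{W}^{(\beta)}(-d)^{-1}\bigr]^{-1}\longrightarrow\mathcal{H}^{(\omega)}(x)\mathcal{H}^{(\omega)}(c)^{-1},
\]
and since the inverses $\mathbf{W}^{(\beta)}(-d)\mathcal{W}^{(\omega)}(c,d)^{-1}=\mathbf{A}^{(\omega)}_d(0,c)$ also converge (to a matrix with entries in $[0,1]$, by the $x=0$ instance of the limit above), $\mathcal{H}^{(\omega)}(c)$ is forced to be invertible. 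The case $x<0$ is immediate, since then $\mathcal{W}^{(\omega)}(x,d)=\mathbf{W}^{(\beta)}(x-d)$ and the integral in \eqref{matrixH} vanishes.

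For part~(ii), take $d=0$ in Theorem~\ref{Two-sided}(ii). Since $X$ has no positive jumps, $\sup_{t\le T}X_t<\infty$ a.s.\ for every $T$, so $\tau_c^+\uparrow\infty$ a.s.\ as $c\uparrow\infty$, and as above monotone convergence gives that $\mathbf{B}^{(\omega)}_0(x,c)$ converges, for every $x\ge 0$, to $\mathbb{E}_{x}[e^{-\int_0^{\tau_0^-}\omega_{J_s}(X_s)ds},\,\tau_0^-<\infty,\,J_{\tau_0^-}\,|\,J_0]$. On the other hand Theorem~\ref{Two-sided}(ii) with $d=0$ reads $\mathcal{W}^{(\omega)}(x)\mathcal{W}^{(\omega)}(c)^{-1}\mathcal{Z}^{(\omega)}(c)=\mathcal{Z}^{(\omega)}(x)-\mathbf{B}^{(\omega)}_0(x,c)$, so for a fixed $x_0>0$, invertibility of $\mathcal{W}^{(\omega)}$ gives
\[
\mathcal{W}^{(\omega)}(c)^{-1}\mathcal{Z}^{(\omega)}(c)=\mathcal{W}^{(\omega)}(x_0)^{-1}\bigl(\mathcal{Z}^{(\omega)}(x_0)-\mathbf{B}^{(\omega)}_0(x_0,c)\bigr),
\]
whose right-hand side converges as $c\to\infty$. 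Hence $\mathbf{C}_{\mathcal{W}(\infty)^{-1}\mathcal{Z}(\infty)}:=\lim_{c\to\infty}\mathcal{W}^{(\omega)}(c)^{-1}\mathcal{Z}^{(\omega)}(c)$ exists, is independent of $x_0$, and has finite entries (a fixed finite matrix times the difference between $\mathcal{Z}^{(\omega)}(x_0)$ and a matrix with entries in $[0,1]$); letting $c\to\infty$ in the identity of Theorem~\ref{Two-sided}(ii) and using the continuity of $\mathcal{W}^{(\omega)}$ then gives the stated formula.

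I expect the main obstacle to be the identification in part~(i), i.e.\ the locally uniform convergence of $\mathcal{W}^{(\omega)}(\cdot,d)\mathbf{W}^{(\beta)}(-d)^{-1}$ to $\mathcal{H}^{(\omega)}$. This requires (a) quantitative control, uniform for $x$ in compacts, of $\mathbf{W}^{(\beta)}(x-d)\mathbf{W}^{(\beta)}(-d)^{-1}$ via \eqref{lim_W}, and (b) a bound on $\mathcal{W}^{(\omega)}(\cdot,d)\mathbf{W}^{(\beta)}(-d)^{-1}$ uniform in $d$ on compacts, obtainable from the integral equation by a Gr\"onwall-type estimate, which justifies interchanging limit and convolution before Lemma~\ref{lem unique} is invoked. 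Part~(ii), by contrast, should be comparatively soft, the existence of $\mathbf{C}_{\mathcal{W}(\infty)^{-1}\mathcal{Z}(\infty)}$ being bootstrapped directly from the elementary convergence of the probabilistic quantities.
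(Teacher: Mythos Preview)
Your proposal is correct and follows essentially the same route as the paper: both parts are obtained by passing to the limit in Theorem~\ref{Two-sided}, with part~(i) handled via the representation \eqref{Wxyd} at $\delta=\beta$ (so the integral collapses to $[0,x]$), a right-normalisation to extract a finite limit, and dominated convergence to identify the limiting integral equation. The only cosmetic differences are that the paper normalises by $e^{-\mathbf{R}^{\beta}d}(\mathbf{L}^{\beta})^{-1}$ rather than your $\mathbf{W}^{(\beta)}(-d)^{-1}$ (asymptotically equivalent by \eqref{lim_W}) and, for part~(ii), cites the explicit integral form $\mathbf{C}_{B^{(\omega)}}=\int_0^\infty e^{\mathbf{R}y}\boldsymbol{\omega}(y)\mathbf{B}^{(\omega)}(y)\,dy$ established inside the proof of Theorem~\ref{Two-sided}(ii) to get finiteness, whereas your bootstrap via $\mathcal{W}^{(\omega)}(c)^{-1}\mathcal{Z}^{(\omega)}(c)=\mathcal{W}^{(\omega)}(x_0)^{-1}\bigl(\mathcal{Z}^{(\omega)}(x_0)-\mathbf{B}^{(\omega)}_0(x_0,c)\bigr)$ is a slightly more self-contained way to reach the same conclusion.
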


Next, we present the representation of $\omega$-type resolvents.

\begin{theorem} \textbf{(Resolvents)}\label{Res}
\begin{itemize}
\item[(i)] For $d\le x \leq c$,
 \begin{align*}
\boldsymbol{U}^{(\omega)}_{(d,c)}(x,dy)&:=\int_0^{\infty} \mathbb{E}_{x}\left[\exp\left( -\int_0^{t} \omega_{J_s}(X_s)ds \right),  X_t\in dy,  t<\tau_d^- \wedge
\tau_c^+, J_t|J_0 \right] dt\\
&\ = \left ( \mathcal{W}^{(\omega)}(x,d) \mathcal{W}^{(\omega)}(c,d)^{-1} \mathcal{W}^{(\omega)}(c,y)-\mathcal{W}^{(\omega)}(x,y)\right) dy.
\end{align*}

\item[(ii)] For $ x \geq 0$ and $\lambda>0$ , \begin{align*}
\boldsymbol{U}^{(\omega)}_{(0,\infty)}(x,dy)&:=\int_0^{\infty} \mathbb{E}_{x}\left[\exp\left( -\int_0^{t} \omega_{J_s}(X_s)ds \right),  X_t\in dy,  t<\tau_0^-, J_t|J_0 \right] dt\\
&\ = \left ( \mathcal{W}^{(\omega)}(x) \mathbf{C}_{\mathcal{W}(\infty)^{-1}\mathcal{W}(\infty)}(y) -\mathcal{W}^{(\omega)}(x,y)\right) dy,
\end{align*}
where
\begin{equation*}
\mathbf{C}_{\mathcal{W}(\infty)^{-1}\mathcal{W}(\infty)}(y):=\lim_{c \to \infty} \mathcal{W}^{(\omega)}(c)^{-1}\mathcal{W}^{(\omega)}(c,y)
\end{equation*}
is well defined and finite matrix.

\item[(iii)] For $ x,y\le c$, \begin{align*}
\boldsymbol{U}^{(\omega)}_{(-\infty,c)}(x,dy):&=\int_0^{\infty} \mathbb{E}_{x}\left[\exp\left( -\int_0^{t} \omega_{J_s}(X_s)ds \right),  X_t\in dy,  t< \tau_c^+, J_t|J_0 \right] dt\\
&= \left ( \mathcal{H}^{(\omega)}(x) \mathcal{H}^{(\omega)}(c)^{-1} \mathcal{W}^{(\omega)}(c,y)-\mathcal{W}^{(\omega)}(x,y)\right) dy.
\end{align*}

\item[(iv)] For $x \in \mathbb{R}$, 
\begin{align*}
\boldsymbol{U}^{(\omega)}_{(-\infty,\infty)}(x,dy):&=\int_0^{\infty} \mathbb{E}_{x}\left[\exp\left( -\int_0^{t} \omega_{J_s}(X_s)ds \right),  X_t\in dy, J_t|J_0 \right] dt\\
&= \left ( \mathcal{H}^{(\omega)}(x) \mathbf{C}_{\mathcal{H}(\infty)^{-1}\mathcal{W}(\infty)}(y)-\mathcal{W}^{(\omega)}(x,y)\right) dy,
\end{align*}
where  matrix $\mathbf{C}_{\mathcal{H}(\infty)^{-1}\mathcal{W}(\infty)}= \lim_{c \to \infty} \mathcal{H}^{(\omega)}(c)^{-1}\mathcal{W}^{(\omega)}(c,y)$
exists and has finite entries.
\end{itemize}
\end{theorem}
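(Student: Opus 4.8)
\textbf{Proof plan for Theorem \ref{Res}.}
The strategy is to derive each resolvent from the corresponding two-sided quantity in Theorem \ref{Two-sided}, exploiting the strong Markov property of the MAP at the relevant first passage times, exactly as in the L\'evy setting of \cite{LP16}. For part (i), I would start from the well-known (non-killed) resolvent of the MAP killed on exiting $(d,c)$, written in terms of $\mathbf{W}$: for $d\le x,y\le c$ one has $\int_0^\infty \mathbb{E}_x[X_t\in dy,\,t<\tau_d^-\wedge\tau_c^+,\,J_t|J_0]\,dt = \bigl(\mathbf{W}(x-d)\mathbf{W}(c-d)^{-1}\mathbf{W}(c-y)-\mathbf{W}(x-y)\bigr)dy$. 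The $\omega$-killed version should then be obtained by a perturbation/Dynkin-type argument: denote the claimed right-hand side density by $\mathbf{u}^{(\omega)}(x,y)$ and the true $\omega$-killed resolvent density by $\mathbf{r}^{(\omega)}(x,y)$; expanding $e^{-\int_0^t\omega_{J_s}(X_s)ds}$ via the identity $e^{-\int_0^t\omega} = 1-\int_0^t \omega_{J_s}(X_s)e^{-\int_s^t\omega}\,ds$ and applying Fubini together with the Markov property at time $s$ yields the renewal-type equation
\begin{equation*}
\mathbf{r}^{(\omega)}(x,y)=\mathbf{u}^{(0)}(x,y)-\int_d^c \mathbf{u}^{(0)}(x,z)\,\boldsymbol{\omega}(z)\,\mathbf{r}^{(\omega)}(z,y)\,dz,
\end{equation*}
where $\mathbf{u}^{(0)}$ is the non-killed resolvent density above. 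One then verifies that $\mathbf{u}^{(\omega)}$ defined through $\mathcal{W}^{(\omega)}$ solves the same equation, and invokes Lemma \ref{lem unique} (uniqueness of solutions of $\mathbf{H}=\mathbf{h}+\mathbf{W}*(\boldsymbol\omega\mathbf{H})$, after rearranging the bounded integral operator into that form) to conclude $\mathbf{r}^{(\omega)}=\mathbf{u}^{(\omega)}$. The algebraic check that $\mathcal{W}^{(\omega)}(x,d)\mathcal{W}^{(\omega)}(c,d)^{-1}\mathcal{W}^{(\omega)}(c,y)-\mathcal{W}^{(\omega)}(x,y)$ satisfies this equation uses only the defining convolution equations \eqref{Woxy}, the shift identity \eqref{shiftw}, and the identity $\mathcal{W}^{(\omega)}(c,d)=\mathcal{W}^{(\omega)}(c,z)+(\text{correction})$ type relations that follow by splitting the integral in \eqref{Woxy} at an intermediate point.

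For part (ii), I would pass to the limit $c\to\infty$ in (i) with $d=0$, which requires controlling $\mathcal{W}^{(\omega)}(c)^{-1}\mathcal{W}^{(\omega)}(c,y)$. The existence and finiteness of $\mathbf{C}_{\mathcal{W}(\infty)^{-1}\mathcal{W}(\infty)}(y)$ should be extracted from the asymptotics $\lim_{x\to\infty}\mathbf{W}^{(q)}(x)e^{\mathbf{R}^q x}=\mathbf{L}^q$ recalled in \eqref{lim_W}: writing the defining equation \eqref{Wxyd} for $\mathcal{W}^{(\omega)}$ with a spectral parameter $\delta$ chosen strictly larger than $\lambda$ (so that $\boldsymbol\omega-\delta\mathbf{I}$ has negative spectrum and the $\mathbf{W}^{(\delta)}$-kernel decays), one shows that $e^{\mathbf{R}^\delta c}\mathcal{W}^{(\omega)}(c,y)$ converges as $c\to\infty$, hence the ratio converges. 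The same estimate, applied with the $\mathcal{Z}$-equation \eqref{Wxyd2}, gives the existence of $\mathbf{C}_{\mathcal{W}(\infty)^{-1}\mathcal{Z}(\infty)}$ needed in Corollary \ref{One-sided}(ii); these facts are presumably proved in Appendix \ref{Proof Main} and may be cited.

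For parts (iii) and (iv), I would instead let $d\to-\infty$, which under the standing assumption \eqref{omega_for_corollary} that $\omega_i\equiv\beta$ below $0$ replaces the role of $\mathcal{W}^{(\omega)}(\cdot,d)$ by the one-sided upward object $\mathcal{H}^{(\omega)}$ of \eqref{matrixH} — precisely the limit already used to derive Corollary \ref{One-sided}(i). Concretely, $\mathcal{W}^{(\omega)}(x,d)\mathcal{W}^{(\omega)}(c,d)^{-1}$ should converge to $\mathcal{H}^{(\omega)}(x)\mathcal{H}^{(\omega)}(c)^{-1}$ as $d\to-\infty$; granting this, (iii) follows from (i) by dominated convergence in the probabilistic expression, and (iv) then follows from (iii) by the further limit $c\to\infty$, with $\mathbf{C}_{\mathcal{H}(\infty)^{-1}\mathcal{W}(\infty)}(y)=\lim_{c\to\infty}\mathcal{H}^{(\omega)}(c)^{-1}\mathcal{W}^{(\omega)}(c,y)$ handled by the same decaying-kernel argument as in (ii), now using equation \eqref{matrixH} in which $\boldsymbol\omega-\beta\mathbf{I}$ is bounded and the $\mathbf{W}^{(\beta)}$-kernel is modulated by $e^{-\mathbf{R}^\beta x}$. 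The main obstacle I anticipate is the justification of the two limiting procedures — establishing that the matrix limits defining $\mathbf{C}_{\cdot}$ exist and are finite, and that one may interchange these limits with the time-integral and expectation in the probabilistic definitions; this is where the invertibility hypotheses on $\mathcal{W}^{(\omega)}$ and $\mathcal{H}^{(\omega)}$ and the decay estimates coming from choosing $\delta>\lambda$ (resp.\ the structure of $\mathbf{R}^\beta$) do the real work, whereas the renewal-equation verifications in (i) are essentially bookkeeping with the convolution identities \eqref{Woxy}--\eqref{Wxyd2}.
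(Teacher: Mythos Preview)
Your proposal is correct and follows essentially the same approach as the paper: the renewal equation you write is precisely the content of the paper's Lemma~\ref{lemk} (derived there via a Poisson-marking argument rather than your direct expansion of $e^{-\int\omega}$), and parts (ii)--(iv) are obtained exactly by the limits $c\to\infty$ and $d\to-\infty$ you describe, the latter citing the convergence $\mathcal{W}^{(\omega)}(x,d)\mathcal{W}^{(\omega)}(c,d)^{-1}\to\mathcal{H}^{(\omega)}(x)\mathcal{H}^{(\omega)}(c)^{-1}$ already established for Corollary~\ref{One-sided}(i). The only organizational difference is in (i): rather than verifying the full claimed formula against the renewal equation via your ``correction-type relations'', the paper rewrites the equation as $\boldsymbol{U}^{(\omega)}_{(d,c)}\boldsymbol{f}(x)=\mathbf{W}(x-d)\mathbf{C}_U-\int_d^x\mathbf{W}(x-y)\boldsymbol{f}(y)\,dy+\int_d^x\mathbf{W}(x-y)\boldsymbol{\omega}(y)\boldsymbol{U}^{(\omega)}_{(d,c)}\boldsymbol{f}(y)\,dy$, applies Lemma~\ref{lem unique} to read off the solution as $\mathcal{W}^{(\omega)}(x,d)\mathbf{C}_U-\int_d^x\mathcal{W}^{(\omega)}(x,y)\boldsymbol{f}(y)\,dy$, and then fixes the free constant $\mathbf{C}_U$ by the boundary condition $\boldsymbol{U}^{(\omega)}_{(d,c)}\boldsymbol{f}(c)=\mathbf{0}$, which sidesteps the splitting algebra you anticipate.
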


\section{Dividends in the Omega ruin model}\label{Div}
In this section, we present one application of the previously obtained results on dividend problem.
The optimal dividend problem is very popular in the field of applied mathematics. De Finetti \cite{DF57} was the first who introduced the
dividend model in risk theory. He proposed the model in which company's surplus is described by random walk with increments $\pm 1$.
In his work, it was proved that, under the rule of maximization of expected discounted dividends before the classical ruin occurs (the surplus reaches below level $0$), the optimal strategy is the so-called barrier strategy 
which is described as follows. For a fixed level $c > 0$, whenever the surplus process reaches this level, one reflects the process
and pays all funds above $c$ as dividends. In the literature, there is a rich set of articles in which this problem was studied in the continuous time;
see, e.g., Loeffen \cite{RL08}, Loeffen and  Renaud \cite{LRJFR10} and Avram et al. \cite{APP07} where the value function of
the barrier strategy and the optimal barrier level was described in the terms of the scale functions.

In this paper, we
assume that the company's reserve process is governed by a Markov additive process $(X,J)$.
Moreover, we assume that this company pays dividends according to the barrier strategy until omega ruin time defined in the following way.
Fix an exponential random variable $\textrm{e}_1$ (with mean $1$) and level $-d \leq 0$, and then omega ruin time is defined as
\[
\tau_{\omega}^d=\inf\{t\ge 0:  \int_0^{t}\omega_{J_s}(X_s) dx>\textrm{e}_1 \text{ or } X_t <-d\},
\]
where, for all $i \in E$, $\omega_i(x) \geq 0$ for $x \geq -d$ and $\omega_i(x) = 0$ when $x < -d$ . Thus ruin can occur in two situations. The first is the situation in which the process crosses a fixed level $-d
\leq 0$ $($for $d =0$ we have a case of classical ruin time$)$. The second possibility is when bankruptcy happens in the so-called red zone and the intensity of this bankruptcy
is a function of current level of the additive component $X$ and the Markov chain $J$. For more details related to this omega ruin time, we refer to \cite{GSY12} and \cite{LP16}.

Immediately from the definition of $\tau_{\omega}^{d}$, one can conclude that
\[
\mathbb{P}_x(\tau_{\omega}^d>t)=\mathbb{E}_x\left [ e^{-\int_0^{t} \omega_{J_s}(X_s) ds}, \tau_{-d}^- >t\right].
\]
We denote dividend barrier strategy $($at $c)$ $\pi^{c}$ as follows
\begin{equation}
\nonumber
\pi^c = \lbrace L_s^c : t \geq 0 \rbrace,
\end{equation}
which is a non-decreasing, left-continuous $\bo{F}$-adapted process starting at zero. Random variable $L_t^c$ can be interpreted as the cumulative dividends paid up to time $t$. In the case of the barrier strategy, we have
\[
L_t^c = \sup_{s \leq t}[X_s - c]\vee 0.
\]
In the following theorem, we set $d=0$ and then consider the general $d$ in the corollary.

\begin{theorem} \label{thm41}
Assume that dividends are discounted at a constant force of interest $\delta>0$ and $d=0$.
The expected discounted present value of the dividends paid before omega ruin ($\tau_{\omega}:=\tau_{\omega}^0$)
under a constant dividend barrier $c$ is given by
\begin{equation*}
\mathbf{v}_c(x):=\mathbb{E}_x\left[ \int_0^{\tau_{\omega}} e^{-\delta t}  dL_t, J_{\tau_{\omega}}| J_0\right]=
\begin{cases} \mathcal{W}^{(\delta+\omega)}(x)\mathcal{W}^{(\delta+\omega)\prime}(c)^{-1}, & \textrm{for} \quad  0 < x \leq c, \\ (x-c) + \mathcal{W}^{(\delta+\omega)}(c)\mathcal{W}^{(\delta+\omega)\prime}(c)^{-1}, & \textrm{for} \quad x > c, \end{cases}
\end{equation*}
for the invertible matrix function:
$$\mathcal{W}^{(\delta+\omega)\prime}(c)=\mathbf{W}^{\prime}(c)+\int_0^c \mathbf{W}^{\prime}(c-y)(\mathbf{\omega}(y)+\delta\mathbf{I})\mathcal{W}^{(\delta+\omega)}(y) dy
+\mathbf{W}(0)(\mathbf{\omega}(c)+\delta\mathbf{I})\mathcal{W}^{(\delta+\omega)}(c). $$
\end{theorem}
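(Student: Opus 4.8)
The plan is to reduce the dividend problem to the two-sided exit identities of Theorem \ref{Two-sided} (applied with the killing rate $\delta + \omega$ in place of $\omega$) together with a standard excursion/renewal argument at the barrier level $c$, as in the L\'evy case treated by Avram et al.\ \cite{APP07} and Li--Palmowski \cite{LP16}. First I would observe that under the barrier strategy $\pi^c$ the controlled surplus $U^c_t = X_t - L^c_t$ is the process $X$ reflected from above at $c$, so the only way dividends are paid is during the times when $U^c$ sits at $c$ and $X$ attempts to go above it. For a starting point $x > c$ the process immediately pays out the lump sum $x - c$ and then restarts at $c$, which gives the second branch of the formula once the first branch (the value at $c$) is known. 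So the core of the proof is to compute $\mathbf{v}_c(x)$ for $0 < x \le c$.

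Next, for $0 < x \le c$ I would condition on the first exit of $X$ from the interval $[0,c]$ before the $\omega$-modified killing (with total rate $\delta + \omega$, since discounting at rate $\delta$ is equivalent to an extra independent exponential clock). Writing $e^{-\int_0^t(\delta+\omega_{J_s}(X_s))ds}$ for the combined discount-and-killing factor, the first passage either exits below at $0$ — contributing nothing to dividends, since ruin (omega-ruin) has occurred — or reaches $c$, at which point by the strong Markov property of $(X,J)$ the process restarts at $c$ and the continuation value is $\mathbf{v}_c(c)$ composed on the left with the matrix $\mathbf{A}^{(\delta+\omega)}(x,c) = \mathcal{W}^{(\delta+\omega)}(x)\,\mathcal{W}^{(\delta+\omega)}(c)^{-1}$ from Theorem \ref{Two-sided}(i). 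Dividends accrued strictly before this first passage are zero because $X$ stays below $c$ on $[0,\tau_c^+)$. Hence $\mathbf{v}_c(x) = \mathcal{W}^{(\delta+\omega)}(x)\,\mathcal{W}^{(\delta+\omega)}(c)^{-1}\,\mathbf{v}_c(c)$, so it remains to identify $\mathbf{v}_c(c)$.

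To pin down $\mathbf{v}_c(c)$ I would use the reflection strategy directly: started at $c$, consider the process reflected at $c$ and run until omega-ruin. Over an infinitesimal time interval the reflected process either stays in the red/green zone and pays local-time dividends, or first down-crosses $0$. The standard trick is to take $x \uparrow c$ in a slightly perturbed problem, or equivalently to differentiate the two-sided identity: one shows $\mathbf{v}_c(c)^{-1}$ equals the right-derivative at $c$ of $x \mapsto \mathbf{A}^{(\delta+\omega)}(x,c)$ normalised appropriately, which produces $\mathcal{W}^{(\delta+\omega)\prime}(c)^{-1}$. Concretely, one argues that $\mathbf{v}_c(c) = \mathcal{W}^{(\delta+\omega)}(c)\,\mathcal{W}^{(\delta+\omega)\prime}(c)^{-1}$ by comparing the barrier strategy at $c$ with the barrier strategy at $c + \varepsilon$ started from $c$: with probability governed by the two-sided up-crossing matrix the process reaches $c+\varepsilon$ (earning roughly $\varepsilon$ units on the way via reflection bookkeeping) and otherwise is ruined, and letting $\varepsilon \downarrow 0$ yields a matrix identity whose solution is the claimed expression. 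The derivative formula for $\mathcal{W}^{(\delta+\omega)\prime}(c)$ itself then follows by differentiating the defining integral equation \eqref{Woxy} (with $\boldsymbol\omega$ replaced by $\boldsymbol\omega + \delta\mathbf{I}$, $y = 0$) under the integral sign, using the Leibniz rule: the boundary term at $z = c$ gives $\mathbf{W}(0)(\boldsymbol\omega(c)+\delta\mathbf{I})\mathcal{W}^{(\delta+\omega)}(c)$ and the interior term gives $\int_0^c \mathbf{W}^\prime(c-y)(\boldsymbol\omega(y)+\delta\mathbf{I})\mathcal{W}^{(\delta+\omega)}(y)\,dy$, plus $\mathbf{W}^\prime(c)$ from differentiating the leading term.

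The main obstacle I anticipate is the rigorous justification of the local-time/reflection step that yields $\mathbf{v}_c(c) = \mathcal{W}^{(\delta+\omega)}(c)\,\mathcal{W}^{(\delta+\omega)\prime}(c)^{-1}$ and, along with it, the invertibility of $\mathcal{W}^{(\delta+\omega)\prime}(c)$ — in the MAP setting one must be careful that the background chain $J$ may change state exactly at the reflection epochs, so the excursion-theory argument has to keep track of the state $J_{\tau_c^+}$ as a matrix index, and the differentiability of $\mathcal{W}^{(\delta+\omega)}$ (needed for $\mathcal{W}^{(\delta+\omega)\prime}$ to make sense) has to be inherited from the differentiability of $\mathbf{W}$ away from $0$ via the integral equation. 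A secondary technical point is interchanging the expectation, the time integral defining $L^c$, and the limit $\varepsilon \downarrow 0$; this is handled by monotone/dominated convergence using the boundedness of $\omega$ and $\delta > 0$ to control the discounting. Everything else is bookkeeping with the matrices $\mathcal{W}^{(\delta+\omega)}(x,y)$ and their shift property \eqref{shiftw}.
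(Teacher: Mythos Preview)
Your proposal is correct and follows essentially the same route as the paper: first reduce $\mathbf{v}_c(x)$ for $0<x\le c$ to $\mathbf{A}^{(\delta+\omega)}(x,c)\mathbf{v}_c(c)$ via Theorem~\ref{Two-sided}(i), then identify $\mathbf{v}_c(c)$ by a perturbation argument comparing the barrier at $c$ with the barrier at $c+\varepsilon$ and letting $\varepsilon\downarrow 0$, and finally obtain the expression for $\mathcal{W}^{(\delta+\omega)\prime}(c)$ by Leibniz's rule on the defining integral equation. The paper makes the perturbation step precise by sandwiching $\mathbf{v}_c(c)$ between explicit upper and lower bounds (the lower bound coming from the suboptimal strategy that withholds dividends until $X$ reaches $c+\tfrac{1}{m}$, the upper bound from the same plus two error terms that vanish as $m\to\infty$), which is exactly the rigorous version of the heuristic you describe; the invertibility of $\mathcal{W}^{(\delta+\omega)\prime}(c)$ is handled via the representation \eqref{Sub-stoch}, addressing the obstacle you flagged.
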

\begin{proof}
At the beginning we will treat the case of  $0 < x \leq c$.
Conditioning on reaching the level $c$ first, we have
\[
\mathbf{v}_c(x)=\mathbf{A}^{(\omega)}(x,c)\mathbf{v}_c(c)=\mathcal{W}^{(\delta+\omega)}(x)\mathcal{W}^{(\delta+\omega)}(c)^{-1}\mathbf{v}_c(c).
\]
As a first step we will find a lower bound for $\mathbf{v}_c(c)$. For $m\in \mathbb{N}$, consider that the dividend is not paid until reaching the level $c+\frac{1}{m}$:
\begin{align*}
\mathbf{v}_c (c) \ge & \ \mathbb{E}_c\left [e^{-\int_0^{\tau_{c+1/m}^{+}}(\delta+\omega_{J_s}(X_s))ds} , \tau_{c+\frac{1}{m}}^+<\tau_0^{-}, J_{\tau_{c+\frac{1}{m}}^{+}}|J_{0} \right]
 \mathbf{v}_c\left(c+\frac{1}{m}\right)\\
=& \mathbb{E}_c\left [e^{-\int_0^{\tau_{c+1/m}^{+}}(\delta+\omega_{J_s}(X_s))ds} , \tau_{c+\frac{1}{m}}^+<\tau_0^{-}, J_{\tau_{c+\frac{1}{m}}^{+}}|J_{0}\right]
\left ( \mathbf{v}_c(c)+\frac{1}{m}\mathbf{I}\right),
\end{align*}
where the last equality is due to the dividend of $\frac{1}{m}$ paid immediately and the fact that the drop in surplus will not cause the state transition.

On the other hand, an upper bound can be found as
\begin{align*}
\mathbf{v}_c (c) \leq & \ \mathbb{E}_c\left [e^{-\int_0^{\tau_{c+1/m}^{+}}(\delta+\omega_{J_s}(X_s))ds} , \tau_{c+\frac{1}{m}}^+<\tau_0^{-}, J_{\tau_{c+\frac{1}{m}}^{+}}|J_0 \right] \left( \mathbf{v}_c(c)+\frac{1}{m}\mathbf{I}\right)\\
& +\frac{1}{m}\mathbb{E}_{c}\left[ \int_0^{\tau_{c+1/m}^{+}} e^{-\delta t} dt  \ e^{-\int_0^{\tau_{c+1/m}^{+}}\omega_{J_s}(X_s)ds} , \tau_{c+\frac{1}{m}}^{+}<\tau_0^- , J_{\tau_0^-} | J_0\right]\\
&+\mathbb{E}_{c}\left[ \int_0^{\tau_{\omega}} e^{-\delta t} dL_{t}^c, \tau_{\omega} <\tau_{c+\frac{1}{m}}^{+} , J_{\tau_{\omega}} | J_0\right],
\end{align*}
where $L_t^{c}$ will be bounded by $\frac{1}{m}$ for the process starting from level $c$ to level $c+\frac{1}{m}$, i.e.,
\[
\mathbb{E}_{c}\left[ \int_0^{\tau_{\omega}} e^{-\delta t} dL_{t}^c, \tau_{\omega} <\tau_{c+\frac{1}{m}}^{+} , J_{\tau_{\omega}} | J_0\right]
 \le \frac{1}{m} \mathbb{P}_c\left (\tau_{\omega} <\tau_{c+\frac{1}{m}}^{+} , J_{\tau_{\omega}}| J_0\right).
\]

Note that as $m\rightarrow \infty$, the following two limits approach to $ \mathbf{0}$:
\[
\lim_{m\rightarrow \infty} \mathbb{E}_{c}\left[ \int_0^{\tau_{c+1/m}^{+}} e^{-\delta t} dt
\ e^{-\int_0^{\tau_{c+1/m}^{+}}\omega_{J_s}(X_s)ds} , \tau_{c+\frac{1}{m}}^{+}<\tau_0^- , J_{\tau_0^-} | J_0\right]= \mathbf{0},
\]
and
\[
\lim_{m\rightarrow \infty} \mathbb{P}_c\left (\tau_{\omega} <\tau_{c+\frac{1}{m}}^{+} , J_{\tau_{\omega}} | J_0\right) =\mathbf{0}.
\]
See Renaud and Zhou \cite{RZ07} and Czarna et al. \cite{CLPZ16} for more details.

Therefore, by the upper and lower bounds,
\begin{align*}
\mathbf{v}_c (c)& = \mathbb{E}_c\left [e^{-\int_0^{\tau_{c+1/m}^{+}}(\delta+\omega_{J_s}(X_s))ds} , \tau_{c+\frac{1}{m}}^+<\tau_0^{-}, J_{\tau_{c+\frac{1}{m}}^{+}}|J_0 \right]
\left( \mathbf{v}_c(c)+\frac{1}{m}\mathbf{I}\right)+o\left(\frac{1}{m}\right)\\
&= \mathcal{W}^{(\delta+\omega)}(c)\mathcal{W}^{(\delta+\omega)}(c+\frac{1}{m})^{-1} \left( \mathbf{v}_c(c)+\frac{1}{m}\mathbf{I}\right)+o\left(\frac{1}{m}\right),
\end{align*}
and hence
$$\left( \mathbf{I} -  \mathcal{W}^{(\delta+\omega)}(c)\mathcal{W}^{(\delta+\omega)}(c+\frac{1}{m})^{-1}\right)\mathbf{v}_c (c)=  \frac{1}{m}
\mathcal{W}^{(\delta+\omega)}(c) \mathcal{W}^{(\delta+\omega)}(c+\frac{1}{m})^{-1}+o\left(\frac{1}{m}\right),$$
 $$\frac{1}{m}\left(   \mathcal{W}^{(\delta+\omega)}(c+\frac{1}{m})\mathcal{W}^{(\delta+\omega)}(c)^{-1} - \mathbf{I}  \right) \mathbf{v}_c (c)=
 \mathbf{I}+ o\left(\frac{1}{m}\right),$$
$$\left( \frac{ \mathcal{W}^{(\delta+\omega)}(c+\frac{1}{m}) - \mathcal{W}^{(\delta+\omega)}(c) }{1/m} \right)\mathcal{W}^{(\delta+\omega)}(c)^{-1} \mathbf{v}_c (c)=\mathbf{I}+ o\left(\frac{1}{m}\right).
$$
Letting $m\rightarrow\infty$, it turns out
\[  \mathcal{W}^{(\delta+\omega)\prime}(c) \mathcal{W}^{(\delta+\omega)}(c)^{-1} \mathbf{v}_c (c)=\mathbf{I},
\]
where matrix $$\mathcal{W}^{(\delta+\omega)\prime}(c)=\mathbf{W}^{\prime}(c)+\int_0^c \mathbf{W}^{\prime}(c-y)(\mathbf{\omega}(y)+\delta\mathbf{I})\mathcal{W}^{(\delta+\omega)}(y) dy
+\mathbf{W}(0)(\mathbf{\omega}(c)+\delta\mathbf{I})\mathcal{W}^{(\delta+\omega)}(c) $$
is well-defined since the scale matrix $\mathbf{W}$ is almost everywhere differentiable, see \cite{KP08}.
Furthermore, one can observe that, from representation (\ref{Sub-stoch}), the above matrix is invertible for any $c > 0$ and
then $  \mathbf{v}_c (c)= \WW^{(\delta+\omega)}(c)\WW^{(\delta+\omega)\prime}(c)^{-1} .$

To end this proof, note that for $x > c$, one is immediately paying dividend of size $x-c$ (and this will not cause the state transition), therefore
$$
\mathbf{v}_c(x) = (x-c) + \mathbf{v}_c(c) = (x-c)+ \WW^{(\delta+\omega)}(c)\WW^{(\delta+\omega)\prime}(c)^{-1}.
$$
\end{proof}

Making use of Theorem \ref{thm41} and the shifting argument, we can state the representation for value function for a general $d \ge 0$.
\begin{corollary}
For $\delta>0$, the expected present value of the dividend paid before omega ruin ($\tau_{\omega}^d$) under a constant dividend barrier $c$ is
\begin{equation*}\label{vcd}
\mathbf{v}^d_c(x):=\mathbb{E}_x\left[ \int_0^{\tau_{\omega}^d} e^{-\delta t}  dL_t, J_{\tau_{\omega}^d}| J_0\right]=
\begin{cases} \mathcal{W}^{(\delta+\omega)}(x,-d)\mathcal{W}^{(\delta+\omega)\prime}(c,-d)^{-1} \qquad \textrm{for} \quad -d < x \leq c,
 \\ (x-c) + \mathcal{W}^{(\delta+\omega)}(c,-d)\mathcal{W}^{(\delta+\omega)\prime}(c,-d)^{-1} \quad \textrm{for} \quad x > c. \end{cases}
\end{equation*}
for invertible matrix:
\begin{align*}
\mathcal{W}^{(\delta+\omega)\prime}(c,-d)=&\mathbf{W}^{\prime}(c+d)+\int_{-d}^c \mathbf{W}^{\prime}(c-y)(\mathbf{\omega}(y)+\delta\mathbf{I})
\mathcal{W}^{(\delta+\omega)}(y,-d) dy \\
&+\mathbf{W}(0)(\mathbf{\omega}(c)+\delta\mathbf{I})\mathcal{W}^{(\delta+\omega)}(c,-d).
\end{align*}
\end{corollary}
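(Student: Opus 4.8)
The plan is to derive the general-$d$ formula from Theorem~\ref{thm41} by a spatial shift of the whole Markov additive process. Fix $d\ge 0$ and set $\widetilde X_t:=X_t+d$, keeping the same modulating chain $J$; since the increments of $X$ are unaffected, $(\widetilde X,J)$ is again a spectrally negative MAP with the same matrix exponent $\mathbf{F}$, hence with the same scale matrix $\mathbf{W}$. Under $\mathbb{P}_{x,i}$ the additive component $\widetilde X$ starts at $x+d$, the barrier $c$ for $X$ becomes the barrier $c+d$ for $\widetilde X$, the red zone $\{x<-d\}$ becomes $\{y<0\}$, and the running-supremum dividend process satisfies $L^c_t=\sup_{s\le t}[X_s-c]\vee 0=\sup_{s\le t}[\widetilde X_s-(c+d)]\vee 0$. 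Consequently $\tau^d_\omega$ coincides with the omega ruin time at level $0$ of $\widetilde X$ with intensity $\widetilde\omega_i(y):=\omega_i(y-d)$, which is nonnegative for $y\ge 0$ and vanishes for $y<0$, exactly the admissibility assumed in Theorem~\ref{thm41}, and $\mathbf{v}^d_c(x)$ equals the corresponding value function of $\widetilde X$ evaluated at $x+d$ under barrier $c+d$.

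Applying Theorem~\ref{thm41} to $(\widetilde X,J)$ then gives, for $0<x+d\le c+d$,
\[
\mathbf{v}^d_c(x)=\mathcal{W}^{(\delta+\widetilde\omega)}(x+d)\,\mathcal{W}^{(\delta+\widetilde\omega)\prime}(c+d)^{-1},
\]
and for $x>c$, by the same argument as in Theorem~\ref{thm41} of immediately paying out the overshoot $x-c$ (which causes no transition of $J$), $\mathbf{v}^d_c(x)=(x-c)+\mathbf{v}^d_c(c)$. It remains to re-express the shifted scale matrices through the two-argument ones. Since $\widetilde\omega(\cdot,z)=\omega(\cdot,z-d)$ is precisely the shifted intensity appearing in the identity (\ref{shiftw}) taken with $y=-d$ (and $\delta\mathbf{I}$ is unaffected by the shift), one has $\mathcal{W}^{(\delta+\widetilde\omega)}(x+d)=\mathcal{W}^{(\delta+\omega)}(x,-d)$ for every $x\ge -d$; this handles the numerator and, once $\mathbf{v}^d_c(c)$ is known, the $x>c$ case as well.

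For the derivative factor I would insert the explicit formula for $\mathcal{W}^{(\delta+\widetilde\omega)\prime}(c+d)$ supplied by Theorem~\ref{thm41} and perform the substitution $y\mapsto y+d$ in the convolution integral: using $\widetilde\omega(y+d)=\omega(y)$ and $\mathcal{W}^{(\delta+\widetilde\omega)}(y+d)=\mathcal{W}^{(\delta+\omega)}(y,-d)$, the term $\int_0^{c+d}\mathbf{W}^{\prime}(c+d-y)(\widetilde\omega(y)+\delta\mathbf{I})\mathcal{W}^{(\delta+\widetilde\omega)}(y)\,dy$ becomes $\int_{-d}^{c}\mathbf{W}^{\prime}(c-y)(\omega(y)+\delta\mathbf{I})\mathcal{W}^{(\delta+\omega)}(y,-d)\,dy$, the boundary term becomes $\mathbf{W}(0)(\omega(c)+\delta\mathbf{I})\mathcal{W}^{(\delta+\omega)}(c,-d)$, and the leading term is $\mathbf{W}^{\prime}(c+d)$; this reproduces exactly the matrix $\mathcal{W}^{(\delta+\omega)\prime}(c,-d)$ in the statement (consistently, it is also the $x$-derivative of $\mathcal{W}^{(\delta+\omega)}(x,-d)$ at $x=c$ obtained by differentiating (\ref{Woxy})), and its invertibility for every $c>0$ is inherited from Theorem~\ref{thm41}. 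The only delicate step is the first one, namely verifying that the shift maps $\tau^d_\omega$, the barrier strategy and the sign and admissibility conditions on $\omega$ faithfully onto the hypotheses of Theorem~\ref{thm41}; once that is established the rest is bookkeeping with (\ref{shiftw}) and a change of variables.
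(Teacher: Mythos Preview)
Your proposal is correct and follows precisely the route the paper indicates: the paper does not give a detailed argument but simply states that the corollary follows from Theorem~\ref{thm41} via the shifting argument, and you have faithfully supplied those details using the translation $\widetilde X_t=X_t+d$ together with identity~(\ref{shiftw}). The substitution $y\mapsto y+d$ in the derivative formula and the check that the shifted intensity $\widetilde\omega$ meets the hypotheses of Theorem~\ref{thm41} are exactly the bookkeeping the paper leaves implicit.
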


\section{Examples} \label{sec: examples}
The aim of this section is to give examples of $\omega$-scale matrices when the $\omega$ function is specified.
 We would like to present relations between $\WWomega$ and $\bo{W}^{(q)}$, for some $q \geq 0$, as well as numerical examples which
 help to understand better the nature of explored matrix-valued functions. We will start with short analyse of $\bo{W}^{(q)}$
for Markov modulated Brownian motion, since this model will be a base for more complicated scale matrices.
\subsection{Markov modulated Brownian motion}\label{MMBM_sec}
In this part, we will consider a special case when $(X,J)$ is a Markov modulated Brownian motion.
Our aim is to derive some relations which will be useful in the subsequent examples. Let $X_i$ be a Brownian motion with variance $\sigma_i^2>0$ and
 drift $\mu_i$ for all $i \in E$. Further denote $\boldsymbol{\sigma}$ and $\boldsymbol{\mu}$ as the (column) vectors of $\sigma_i$ and $\mu_i$, and $\Delta_{\boldsymbol{v}}$ as the
diagonal matrix with $\boldsymbol{v}$ on the diagonal. Therefore, the matrix Laplace exponent $\bo{F}(s)$ is given by
\[\bo{F}(s)=\frac{1}{2}\Delta_{\boldsymbol{\sigma}}^2 s^2 +\Delta_{\boldsymbol{\mu}}s +\boldsymbol{Q}.
\]

Despite the case when ${\kappa} := \bo{\pi} \bo{\mu} = 0$ and $q =0$, Ivanovs \cite{I10} gives the representation of the $q$-scale matrix
\begin{equation}\label{MMBM Scale matrix formula}
\mathbf{W}^{(q)}(x)=\left( e^{-\mathbf{\Lambda}^{+}_q x}-e^{\mathbf{\Lambda}^{-}_q x}\right)\bo{\Xi}_q,
\end{equation}
where $\bo{\Xi}_q^{-1}=-\frac{1}{2} \Delta_{\boldsymbol{\sigma}}^2(\Lpq +\Lmq)$ and $\bo{\Lambda}^{\pm}_q$ are the (unique)
right solutions to the matrix integral equation $\bo{F}(\mp\mathbf{\Lambda}^{\pm}_q)=\mathbf{0}$, that is,
\begin{equation}\label{Characteristic equation}
\Delta_{\frac{\bo{\sigma}^2}{2}}(\bo{\Lambda}^{\pm}_q)^{2} \mp \Delta_{\bo{\mu}}\bo{\Lambda}^{\pm}_q + \Bigl(\bo{Q} - q \textbf{I}\Bigr) = \bo{0}.
\end{equation}
In the next lemma, we present relations between $\Lpq$ and $\Lmq$.
\begin{lemma}
For $q \geq 0$, we have
\begin{equation}
\label{Relation C Lp}
\Delta_{\frac{2 \bo{\mu}}{\bo{\sigma}^2}} = \Lpq - \bo{C}_{q}, \quad
\bo{C}_{q}\Lpq = \Delta_{\frac{2}{\bo{\sigma}^{2}}}\Bigl[-\bo{Q} + q \textbf{I}\Bigr]
\end{equation}
and
\begin{equation}
\label{Relation D Lm}
\Delta_{\frac{2 \bo{\mu}}{\bo{\sigma}^2}} = \bo{D}_{q} - \Lm_{q}, \quad \bo{D}_{q}\Lm_{q} = \Delta_{\frac{2}{\sigma^2}}\Bigl[-\bo{Q} + q \textbf{I}\Bigr],
\end{equation}
where
\[ \bo{C}_{q} = (\Lpq+\Lmq)\Lmq(\Lpq+\Lmq)^{-1},\quad
\bo{D}_{q} = \Bigl(\Lp_{q} + \Lm_{q}\Bigr)\Lp_{q}\Bigl(\Lp_{q} + \Lm_{q}\Bigr)^{-1}.
\]
\end{lemma}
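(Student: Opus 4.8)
The plan is to reduce everything to the two characteristic equations in \eqref{Characteristic equation}, together with the elementary identity $\Lpq(\Lpq+\Lmq)-(\Lpq+\Lmq)\Lmq=(\Lpq)^2-(\Lmq)^2$, in which the mixed products $\Lpq\Lmq$ cancel. Throughout, write $\bo{S}:=\Lpq+\Lmq$; this matrix is invertible, which is precisely the invertibility already needed for $\bo{\Xi}_q$ in \eqref{MMBM Scale matrix formula} and hence for $\bo{C}_q$ and $\bo{D}_q$ to be well defined. One should also bear in mind that the diagonal matrix $\Delta_{\frac{2\bo{\mu}}{\bo{\sigma}^2}}$ and the matrix $\bo{Q}$ do \emph{not} commute with $\Lpq$ or $\Lmq$, so every multiplication below must be carried out on the correct side.

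First I would left-multiply both equations in \eqref{Characteristic equation} by the invertible diagonal matrix $\Delta_{\frac{2}{\bo{\sigma}^2}}$ and rearrange, obtaining
\[
(\Lpq)^2=\Delta_{\tfrac{2\bo{\mu}}{\bo{\sigma}^2}}\Lpq+\Delta_{\tfrac{2}{\bo{\sigma}^2}}\bigl(q\mathbf{I}-\bo{Q}\bigr),\qquad
(\Lmq)^2=-\Delta_{\tfrac{2\bo{\mu}}{\bo{\sigma}^2}}\Lmq+\Delta_{\tfrac{2}{\bo{\sigma}^2}}\bigl(q\mathbf{I}-\bo{Q}\bigr).
\]
Subtracting these two, the common constant term drops out and there remains $(\Lpq)^2-(\Lmq)^2=\Delta_{\frac{2\bo{\mu}}{\bo{\sigma}^2}}\bo{S}$.

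Next, expanding $\Lpq\bo{S}-\bo{S}\Lmq$ the cross terms $\Lpq\Lmq$ cancel, so that $\Lpq\bo{S}-\bo{S}\Lmq=(\Lpq)^2-(\Lmq)^2=\Delta_{\frac{2\bo{\mu}}{\bo{\sigma}^2}}\bo{S}$ by the previous step. Right-multiplying by $\bo{S}^{-1}$ and recalling $\bo{C}_q=\bo{S}\Lmq\bo{S}^{-1}$ gives $\Lpq-\bo{C}_q=\Delta_{\frac{2\bo{\mu}}{\bo{\sigma}^2}}$, which is the first identity in \eqref{Relation C Lp}. Moreover $\bo{C}_q+\bo{D}_q=\bo{S}(\Lmq+\Lpq)\bo{S}^{-1}=\bo{S}=\Lpq+\Lmq$, so $\bo{D}_q-\Lmq=\Lpq-\bo{C}_q=\Delta_{\frac{2\bo{\mu}}{\bo{\sigma}^2}}$, which is the first identity in \eqref{Relation D Lm}; in particular the two difference-identities are equivalent.

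Finally, for the product identities I would substitute $\bo{C}_q=\Lpq-\Delta_{\frac{2\bo{\mu}}{\bo{\sigma}^2}}$ just obtained, so that $\bo{C}_q\Lpq=(\Lpq)^2-\Delta_{\frac{2\bo{\mu}}{\bo{\sigma}^2}}\Lpq$, and then insert the formula for $(\Lpq)^2$ displayed above; the two terms $\Delta_{\frac{2\bo{\mu}}{\bo{\sigma}^2}}\Lpq$ cancel and we are left with $\bo{C}_q\Lpq=\Delta_{\frac{2}{\bo{\sigma}^2}}(q\mathbf{I}-\bo{Q})=\Delta_{\frac{2}{\bo{\sigma}^2}}[-\bo{Q}+q\mathbf{I}]$. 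The identity $\bo{D}_q\Lmq=\Delta_{\frac{2}{\bo{\sigma}^2}}[-\bo{Q}+q\mathbf{I}]$ follows in exactly the same way from $\bo{D}_q=\Lmq+\Delta_{\frac{2\bo{\mu}}{\bo{\sigma}^2}}$ and the formula for $(\Lmq)^2$. There is no genuinely difficult step here; the only thing demanding care is the non-commutativity noted at the outset, which is exactly why it matters that the cancellation in $\Lpq\bo{S}-\bo{S}\Lmq$ produces $(\Lpq)^2-(\Lmq)^2$ with no surviving cross terms and that the two characteristic equations share the same constant term $\Delta_{\frac{2}{\bo{\sigma}^2}}(q\mathbf{I}-\bo{Q})$.
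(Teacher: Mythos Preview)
Your proof is correct and follows essentially the same route as the paper: subtract the two characteristic equations to obtain $(\Lpq)^2-(\Lmq)^2=\Delta_{\frac{2\bo{\mu}}{\bo{\sigma}^2}}(\Lpq+\Lmq)$, use the factorization $\Lpq(\Lpq+\Lmq)-(\Lpq+\Lmq)\Lmq=(\Lpq)^2-(\Lmq)^2$, and then feed the resulting difference identity back into the characteristic equation to get the product identity. The only cosmetic difference is that for the $\bo{D}_q$ relations the paper uses the companion factorization $(\Lpq)^2-(\Lmq)^2=(\Lpq+\Lmq)\Lpq-\Lmq(\Lpq+\Lmq)$ directly, whereas you derive them via the neat observation $\bo{C}_q+\bo{D}_q=\Lpq+\Lmq$; both amount to the same computation.
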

\begin{proof}
\\
Using equations  $(\ref{Characteristic equation})$ altogether, one can obtain
\begin{equation}
\nonumber
\Delta_{\frac{\bo{\sigma}^2}{2}}\Bigl( (\Lpq)^{2} - (\Lmq)^{2}\Bigr) =
\Delta_{\bo{\mu}}\Bigl(\Lpq+\Lmq \Bigr),
\end{equation}
hence,
\begin{equation}
\nonumber
\begin{split}
\Delta_{\frac{2 \bo{\mu}}{\bo{\sigma}^{2}}} &= \Bigl( (\Lpq)^{2} - (\Lmq)^{2}\Bigr)\Bigl(\Lpq+\Lmq\Bigr)^{-1}\\&
= \Bigl( \Lpq(\Lpq+\Lmq)-(\Lpq+\Lmq)\Lmq\Bigr)\Bigr(\Lpq + \Lmq\Bigr)^{-1}\\
&= \Lpq - \bo{C}_{q}.
\end{split}
\end{equation}

Now, the above relationship together with $($\ref{Characteristic equation}$)$ gives that:
\begin{equation}
\nonumber
\bo{C}_{q}\Lpq = \Delta_{\frac{2}{\bo{\sigma}^{2}}}\Bigl[-\bo{Q} + q \textbf{I}\Bigr].
\end{equation}
The remaining part of the proof can be done in a similar way by using
\[
(\Lpq)^2 - (\Lmq)^2 = (\Lpq+\Lmq)\Lpq - \Lmq(\Lpq+\Lmq).
\]
\end{proof}
\\
In the special case of $q = 0$ we will write $\Lp$, $\Lm$, $\bo{C}$ and $\bo{D}$ for $\bo{\Lambda}^{+}_{0}$, $\bo{\Lambda}^{-}_{0}$, $\bo{C}_{0}$ and $\bo{D}_{0}$, respectively.

Note that if $(X,J)$ is a MMBM with one single state $($i.e., one dimensional Brownian motion$)$, we have, for $q\ge 0$,  
\[
\mathbf{\Lambda}_q^+= -\rho_2, \quad \mathbf{\Lambda}_q^-=-\rho_1,
\]
where $\rho_1-\rho_2=\frac{2\mu}{\sigma^2}$ and $\rho_1 + \rho_2 = \frac{2\sqrt{\mu^2+2q\sigma^2}}{\sigma^2}$.
In general, for the MMBM, we can only calculate explicit analytical formulas for $\bo{W}^{(q)}(x)$, $\Lp_{q},$ and $\Lm_q$ for some special cases.
 For instance, consider the following parameters
\begin{equation}
\nonumber
\Delta_{\bo{\sigma}} = \left( \begin{array}{cc} \sigma_{1} & 0\\ 0 & \sigma_{2} \end{array}\right), \quad
\Delta_{\bo{\mu}} = \left(\begin{array}{cc} 0&0 \\0&0 \end{array}\right), \quad \bo{Q} = \left(\begin{array}{cc} -q_{11}& q_{11} \\ q_{22}& -q_{22}\end{array}\right) \quad\mbox{and} \quad q > 0,
\end{equation}
for $\sigma_{1}$, $\sigma_{2}$, $q_{11}$, $q_{22} \in \mathbb{R}_{+} $. Then the matrix $\bo{F}(s) - q\textbf{I}$ is of the form
\begin{equation}
\nonumber
\bo{F}(s)-q\textbf{I} = \left(\begin{array}{cc} \frac{\sigma_{1}^2}{2}s^2 - q_{11} - q & q_{11} \\ q_{22} & \frac{\sigma_2^2}{2}s^2 -q_{22} -q\end{array}\right).
\end{equation}
Thus,
\begin{equation}
\nonumber
(\bo{F}(s)-q\textbf{I})^{-1}=\frac{1}{(\frac{\sigma_{1}^2}{2}s^2 -q_{11}-q)(\frac{\sigma_{2}^2}{2}s^2 -q_{22} -q) - q_{11} q_{22}}
\left(\begin{array}{cc} \frac{\sigma_{2}^2}{2}s^2 - q_{22} - q & -q_{11}\\ -q_{22} & \frac{\sigma_{1}^2}{2}s^2 - q_{11} - q\end{array} \right).
\end{equation}
Inversion of the Laplace transform  (\ref{matrix_W_def}) with respect to $s$ gives:
\begin{align}\label{MMBM scale matrix analytical mu = 0}
\nonumber
\bo{W}^{(q)}(x) =&\left(\begin{array}{cc} 2(q_{22}+q)-\alpha_2^2 \sigma_2^2 & 2 q_{11}
\\[10pt] 2 q_{22} & 2(q_{11}+q)-\alpha_2^2 \sigma_1^2 \end{array}\right)\frac{e^{\alpha_2 x}-e^{-\alpha_2 x}}
{(\alpha_1^2-\alpha_2^2)\alpha_2\sigma_1^2\sigma_2^2} \\[10pt]& -\left(\begin{array}{cc} 2(q_{22}+q)-\alpha_{1}^2 \sigma_2^2 &  2 q_{11} \\[10pt] 2 q_{22} &2(q_{11}+q)-\alpha_1^2 \sigma_1^2 \end{array}\right)\frac{e^{\alpha_{1} x} - e^{- \alpha_{1} x}}{(\alpha_1^2-\alpha_2^2)\alpha_1 \sigma_1^2 \sigma_2^2},
\end{align}
where
\begin{align*}
\nonumber
&\alpha_1 = \frac{\sqrt{M_{q}+\sqrt{(M_q)^2 - 4\sigma_1^2\sigma_2^2K_q}}}{\sigma_1 \sigma_2},
\quad \alpha_2 = \frac{\sqrt{M_q-\sqrt{(M_q)^2 - 4\sigma_1^2\sigma_2^2K_q}}}{\sigma_1 \sigma_2},\\&
 M_{q}=\sigma_1^2(q_{22}+q)+\sigma_2^2(q_{11}+q), \quad K_{q}=(q_{11}+q_{22}+q)q.
\end{align*}
It is straightforward that
\begin{align*}
\nonumber
\bo{W}^{(q)\prime}(x) =&\left(\begin{array}{cc} 2(q_{22}+q)-\alpha_2^2 \sigma_2^2 & 2 q_{11} \\[10pt] 2 q_{22} & 2(q_{11}+q)-\alpha_2^2 \sigma_1^2
\end{array}\right)\frac{e^{\alpha_2 x}+e^{-\alpha_2 x}}{(\alpha_1^2-\alpha_2^2)\sigma_1^2\sigma_2^2} \\[10pt]&- \left(\begin{array}{cc} 2(q_{22}+q)-\alpha_{1}^2 \sigma_2^2 &  2 q_{11} \\[10pt] 2 q_{22} &2(q_{11}+q)-\alpha_1^2 \sigma_1^2 \end{array}\right)\frac{e^{\alpha_{1} x} + e^{- \alpha_{1} x}}{(\alpha_1^2-\alpha_2^2) \sigma_1^2 \sigma_2^2}.
\end{align*}
Our last step is to derive the formulas for $\Lpq$ and $\Lmq$. First, note that $\Lpq = \Lmq$ due to the assumption of $\mu_{1} = \mu_{2} = 0$ and equation $(\ref{Characteristic equation})$. Then  $(\ref{Relation C Lp})$ becomes
\begin{equation}
\nonumber
(\Lpq)^2 = \Delta_{\frac{2}{\bo{\sigma}^2}}\Bigl[-\bo{Q} + q\textbf{I}\Bigr].
\end{equation}
Since $-\alpha_1$ and $-\alpha_2$ are eigenvalues of $\Lpq$, thus after some basic algebra, we get that
\begin{equation}
\nonumber
\Lpq = \Lmq = \left( \begin{array}{cc} \frac{-\sqrt{2\sigma_2^2(\alpha_1+\alpha_2)^2(q_{11}+q)-4q_{11}q_{22}}}{\sigma_1 \sigma_2} &
 \frac{2 q_{11}}{\sigma_1^2} \\[10pt] \frac{2 q_{22}}{\sigma_2^2} & \frac{-\sqrt{2\sigma_1^2 (\alpha_1+\alpha_2)^2(q_{22}+q)- 4 q_{11}q_{22}}}{\sigma_1 \sigma_2 }
\end{array}\right) \frac{1}{\alpha_1+\alpha_2}.
\end{equation}
Finally, we will provide a graphical example of the scale matrix. Consider the following setting of the parameters
\begin{equation}
\nonumber
\Delta_{\bo{\sigma}} = \left(\begin{array}{cc} 1 & 0 \\ 0 & 1.2 \end{array}\right), \quad \Delta_{\bo{\mu}} = \left(\begin{array}{cc} 0& 0 \\ 0&0 \end{array}\right), \quad \bo{Q} = \left(\begin{array}{cc} -0.05 & 0.05\\ 0.1 & -0.1 \end{array} \right), \quad\mbox{and}\quad  q = 0.05.
\end{equation}

\begin{figure}[H]
\centering
 \includegraphics[width=16cm]{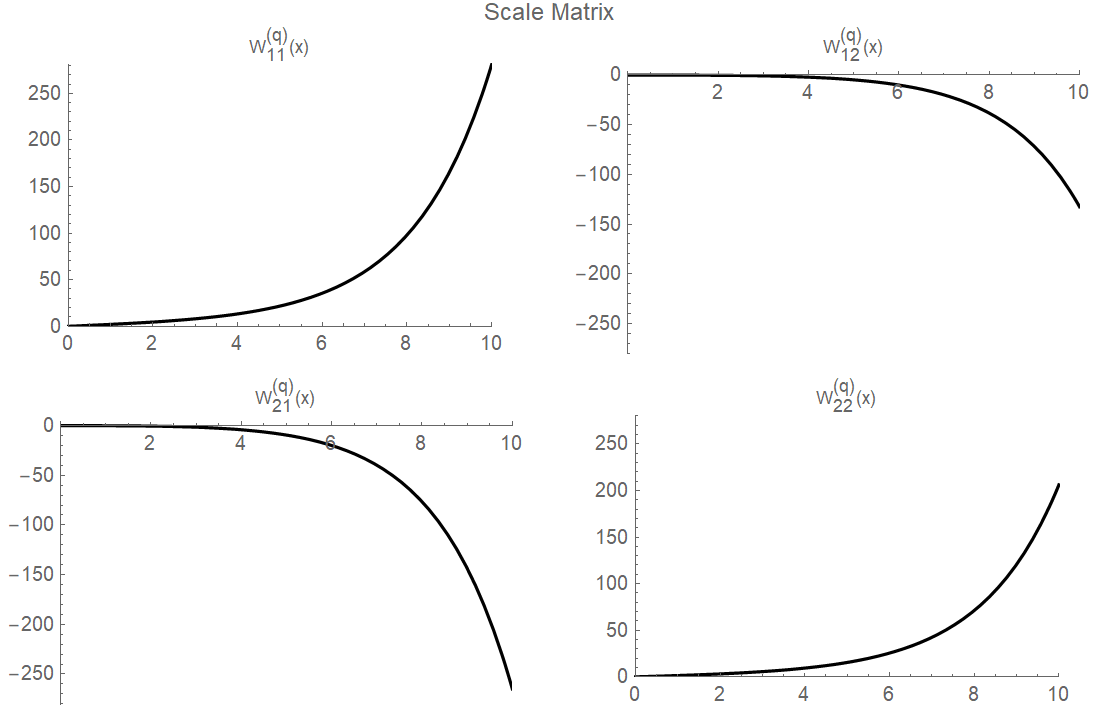}
 \caption{Entries of scale matrix function $\bo{W}^{(q)}$} \label{fig1}
 \end{figure}
Using the formula $($\ref{MMBM scale matrix analytical mu = 0}$)$, the scale matrix $\bo{W}^{(q)}$ is plotted in Figure \ref{fig1}. We can see that the diagonal cells of this matrix have the same shape as the one dimensional scale functions, where off-diagonal ones are reflected in shape. In the subsequent examples, we will provide plots of omega-matrices to compare them to these traditional ones.

\subsection{Constant state-dependent discount rates}
Consider the special case where $\omega_i(x)\equiv \omega_i$ is a constant for all $x \in \mathbb{R}$ and $i \in E$. Therefore, the discounting structure depends on the state of
the chain $J$ only. Before calculating $\omega$-scale matrix, let us state the following proposition.
\begin{proposition}
Let $\omega_i(x)\equiv \omega_i$ for all $x \in \mathbb{R}$ and $i \in E$. The $\omega$-scale matrix has the Laplace transform
\[
\widetilde{\mathcal{W}}^{(\omega)}(s)=(\mathbf{F}(s)-\boldsymbol{\omega})^{-1}.
\]
\end{proposition}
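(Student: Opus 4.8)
The plan is to take the Laplace transform of the defining integral equation for $\mathcal{W}^{(\omega)}$ and solve the resulting algebraic equation. Recall that when $\omega_i(x)\equiv\omega_i$ is constant, the matrix $\boldsymbol{\omega}(x)=\boldsymbol{\omega}$ is a constant diagonal matrix, so the convolution term in \eqref{W} simplifies: $\mathbf{W}*(\boldsymbol{\omega}\mathcal{W}^{(\omega)})(x)=\mathbf{W}*\mathcal{W}^{(\omega)}(x)\,\boldsymbol{\omega}$ is not quite right since matrices do not commute, but more carefully $\mathbf{W}*(\boldsymbol{\omega}\mathcal{W}^{(\omega)})(x)=\int_0^x\mathbf{W}(x-y)\boldsymbol{\omega}\mathcal{W}^{(\omega)}(y)\,dy$, whose Laplace transform factors as $\widetilde{\mathbf{W}}(s)\,\boldsymbol{\omega}\,\widetilde{\mathcal{W}}^{(\omega)}(s)$ by the convolution theorem for matrix-valued functions. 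So applying $\widetilde{\cdot}$ to \eqref{W} gives
\[
\widetilde{\mathcal{W}}^{(\omega)}(s)=\widetilde{\mathbf{W}}(s)+\widetilde{\mathbf{W}}(s)\,\boldsymbol{\omega}\,\widetilde{\mathcal{W}}^{(\omega)}(s).
\]

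Next I would use the known Laplace transform of the ordinary scale matrix. By \eqref{matrix_W_def} with $q=0$, $\widetilde{\mathbf{W}}(s)=\mathbf{F}(s)^{-1}$ for $s$ large enough. Substituting and rearranging:
\[
\bigl(\mathbf{I}-\mathbf{F}(s)^{-1}\boldsymbol{\omega}\bigr)\widetilde{\mathcal{W}}^{(\omega)}(s)=\mathbf{F}(s)^{-1},
\]
hence $\widetilde{\mathcal{W}}^{(\omega)}(s)=\bigl(\mathbf{I}-\mathbf{F}(s)^{-1}\boldsymbol{\omega}\bigr)^{-1}\mathbf{F}(s)^{-1}=\bigl(\mathbf{F}(s)(\mathbf{I}-\mathbf{F}(s)^{-1}\boldsymbol{\omega})\bigr)^{-1}=(\mathbf{F}(s)-\boldsymbol{\omega})^{-1}$, which is the claimed identity. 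One should note for completeness that $\mathcal{W}^{(\omega)}$ is of bounded exponential growth (so its Laplace transform exists in a right half-plane): this follows from Lemma \ref{lem unique} together with the exponential bound on $\mathbf{W}$, or directly from the series representation obtained by iterating \eqref{W}.

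The main obstacle, such as it is, is purely a matter of justifying the manipulations rather than any deep difficulty: one must check that $\widetilde{\mathbf{W}}(s)$ and $\widetilde{\mathcal{W}}^{(\omega)}(s)$ are both defined (finite) on a common right half-plane, that the convolution theorem applies to these matrix-valued functions, and that $\mathbf{I}-\mathbf{F}(s)^{-1}\boldsymbol{\omega}$ is invertible for $s$ large — the latter because $\mathbf{F}(s)^{-1}\to\mathbf{0}$ as $s\to\infty$ (entrywise, since $\psi_i(s)\to\infty$), so the relevant matrix tends to $\mathbf{I}$. Once these routine points are in place, the displayed algebra is immediate, and uniqueness of the Laplace transform identifies $\widetilde{\mathcal{W}}^{(\omega)}(s)$ with $(\mathbf{F}(s)-\boldsymbol{\omega})^{-1}$ on the half-plane, which suffices.
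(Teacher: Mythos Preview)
Your proposal is correct and follows essentially the same approach as the paper: take the Laplace transform of the defining equation \eqref{W}, use $\widetilde{\mathbf{W}}(s)=\mathbf{F}(s)^{-1}$, and solve the resulting matrix equation to obtain $(\mathbf{F}(s)-\boldsymbol{\omega})^{-1}$. The paper's proof is just the two displayed lines you derive, without the additional justification you supply about existence of the transforms and invertibility of $\mathbf{I}-\mathbf{F}(s)^{-1}\boldsymbol{\omega}$.
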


\begin{proof}
Taking the Laplace transform on both sides of (\ref{W}), we have
\[
\widetilde{\mathcal{W}}^{(\omega)}(s)=\widetilde{\mathbf{W}}(s)+\widetilde{\mathbf{W}}(s)\boldsymbol{\omega}\widetilde{\mathcal{W}}^{(\omega)}(s),
\]
which gives
\[
\widetilde{\mathcal{W}}^{(\omega)}(s)=\left(\mathbf{I}- \widetilde{\mathbf{W}}(s)\boldsymbol{\omega}\right)^{-1} \widetilde{\mathbf{W}}(s)=(\mathbf{F}(s)-\boldsymbol{\omega})^{-1}.
\]
\end{proof}

As a example of such $\omega$-scale matrix, we take again the model of Markov modulated Brownian motion with the following parameters: $ \omega_{1}(x) = \omega_{1}$, $\omega_{2}(x) = \omega_{2}$,
\begin{equation}
\nonumber
\Delta_{\bo{\sigma}} = \left(\begin{array}{cc} \sigma_1 & 0 \\ 0 & \sigma_2 \end{array}\right), \quad \Delta_{\bo{\mu}} =
\left(\begin{array}{cc} 0 & 0 \\ 0 & 0 \end{array}\right), \quad\mbox{and}\quad \bo{Q} = \left(\begin{array}{cc} -q_{11} & q_{11} \\ q_{22} & -q_{22}
\end{array} \right).
\end{equation}
Using the same method as in the previous subsection, we will obtain analytical formula for the $\omega$-killed matrix.
Taking the inverse of $\bo{F}(s) - \boldsymbol{\omega}$, one has
\begin{equation}
\nonumber
(\bo{F}(s) - \boldsymbol{\omega})^{-1} = \frac{1}{(\frac{\sigma_1^2}{2}s^2 - q_{11}-\omega_{1})(\frac{\sigma_2^2}{2}s^2 - q_{22} - \omega_{2})-q_{11}q_{22}}
\left(\begin{array}{cc} \frac{\sigma_2^2}{2}s^2 - q_{22}-\omega_{2} & -q_{11}\\ -q_{22} & \frac{\sigma_1^2}{2}s^2-q_{11}-\omega_{1} \end{array}\right),
\end{equation}
whose Laplace inversion gives
\begin{align*}
\WW^{(\omega)}(x) =& \left(\begin{array}{cc} 2(q_{22}+\omega_2)-\alpha_2^2\sigma_2^2 & 2q_{11} \\ 2q_{22} & 2(q_{11}+\omega_1)
 - \alpha_2^2\sigma_1^2 \end{array}\right)\frac{e^{\alpha_2 x}-e^{-\alpha_2 x}}{(\alpha_1^2-\alpha_2^2)\alpha_2 \sigma_1^2 \sigma_2^2} \\[10pt]&- \left(\begin{array}{cc} 2(q_{22}+q_2)-\alpha_1^2\sigma_2^2 & 2q_{11} \\ 2q_{22} & 2(q_{11}+\delta_1) - \alpha_1^2 \sigma_1^2\end{array}\right) \frac{e^{\alpha_1 x } - e^{-\alpha_1 x}}{(\alpha_1^2-\alpha_2^2)\alpha_1 \sigma_1^2 \sigma_2^2},
\end{align*}
where
\begin{equation}
\nonumber
\begin{split}
& \alpha_1 = \frac{\sqrt{M_{\omega} + \sqrt{(M_{\omega})^2- 4\sigma_1^2\sigma_2^2 K_{\omega}}}}{\sigma_1 \sigma_2}, \quad
 \alpha_2 = \frac{\sqrt{M_{\omega} - \sqrt{(M_{\omega})^2- 4\sigma_1^2\sigma_2^2 K_{\omega}}}}{\sigma_1 \sigma_2},\\
& M_{\omega}=\sigma_1^2(q_{22}+\omega_2) + \sigma_2^2(q_{11}+\omega_1), \quad K_{\omega}=q_{11}\omega_2+\omega_1q_{22} + \omega_1\omega_2.
\end{split}
\end{equation}
Note that, for $\omega_1=\omega_2=q$, the result is consistent with the previous result for the ($q$)-scale matrix $\bo{W}^{(q)}$ in (\ref{MMBM scale matrix analytical mu = 0}). Now, consider the following setting of the parameters
\begin{equation}
\nonumber
\Delta_{\bo{\sigma}} = \left(\begin{array}{cc} 1 & 0 \\ 0 & 1.2 \end{array} \right),\quad
\Delta_{\bo{\mu}} = \left(\begin{array}{cc} 0 & 0 \\ 0 & 0 \end{array}\right),\quad
\bo{Q} = \left(\begin{array}{cc} -0.05 & 0.05 \\ 0.1 & -0.1 \end{array}\right), \quad \omega_{1}(x) = 0.05, \quad \omega_{2}(x) = 0.25,
\end{equation}
which results in the plots of $\omega$-scale matrix in Figure \ref{Fig2}. In Figure \ref{Fig2} one can observe that $\omega$-scale matrix has similar shape as $\bo{W}^{(q)}$.

\begin{figure}[H]
\centering
\includegraphics[width=16cm]{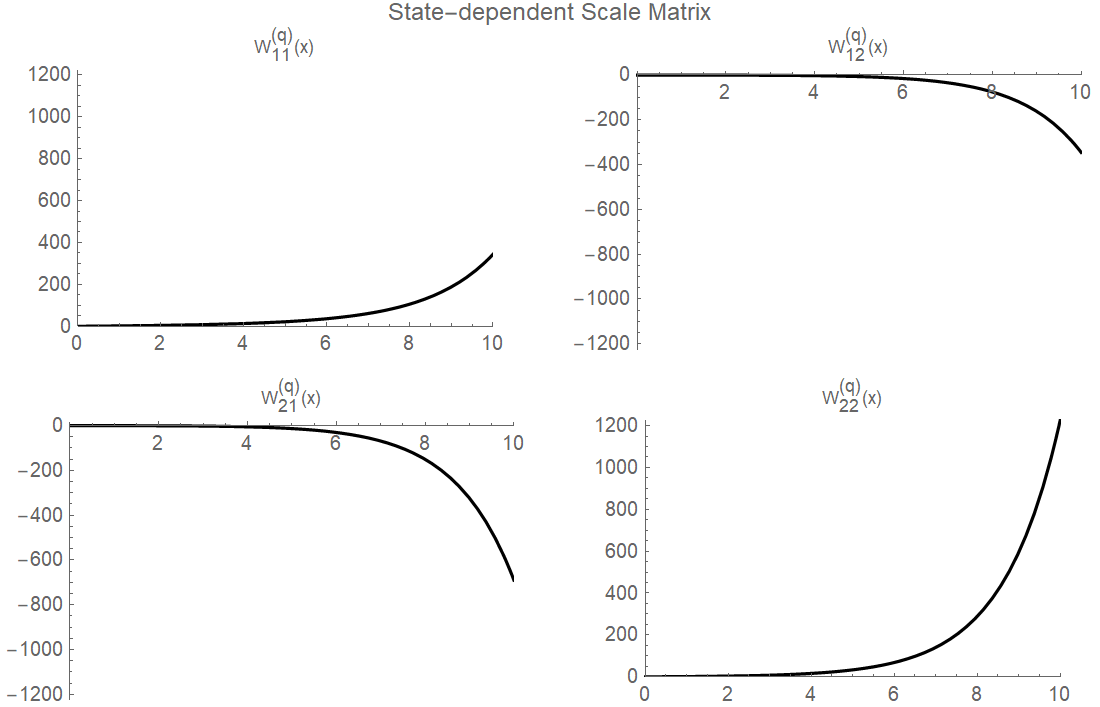}
\caption{Entries of $\omega$-scale matrix function $\WWomega$ for state-dependent $\omega$ function}\label{Fig2}
\end{figure}

\subsection{Step $\omega$-scale matrix}
In this example, we consider omega function as a positive step function which depends only on the position of the process $X$.
Such an assumption is motivated by the situation where the company has the discounting structure depending on its current financial status.
Li and Palmowski \cite{LP16} showed that, in the case of spectrally negative L\'evy processes, such $\omega$-scale functions
have recurrent nature. Same observation holds true for MAPs.

\begin{proposition}\label{Prop_step_omega}
Assume that omega function is of the form
\begin{equation}
\nonumber
 \omega(i,x):=\omega(x)=p_0+\sum_{j=1}^n(p_j-p_{j-1})1_{\{x>x_j\}}, \quad \text{ for all i} \in E,
\end{equation}
where $n \in \mathbb{N}$, $\{p_j\}_{j=0}^n$ is a fixed sequence and $\{x_j\}_{j=1}^n$ is an increasing sequence dividing $\mathbb{R}$ into $(n+1)$ parts. Then the omega matrix $\mathcal{W}^{(\omega)}(x,y)$  satisfies
\[
\mathcal{W}^{(\omega)}(x,y)=\mathcal{W}_n^{(\omega)}(x,y),
\]
for $x>y$, where $\mathcal{W}_n^{(\omega)}(x,y)$ is defined recursively as follows:
\[
\mathcal{W}_0^{(\omega)}(x,y)=\mathbf{W}^{(p_0)}(x-y),
\]
and
\[
\mathcal{W}_{k+1}^{(\omega)}(x,y)=\mathcal{W}_{k}^{(\omega)}(x,y)+(p_{k+1}-p_{k})\int_{x_{k+1}}^{x} \mathbf{W}^{(p_{k+1})}(x-z)\mathcal{W}_k^{(\omega)}(z,y)dz,
\]
for $x>x_{k+1}$ and $k=0, 1, \ldots, n-1$.
\end{proposition}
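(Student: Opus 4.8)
The plan is to argue by induction on $k\in\{0,1,\dots,n\}$ that $\mathcal{W}^{(\omega^{(k)})}(x,y)=\mathcal{W}^{(\omega)}_{k}(x,y)$ for all $x>y$, where $\omega^{(k)}(\cdot):=p_0+\sum_{j=1}^{k}(p_j-p_{j-1})1_{\{\cdot>x_j\}}$ denotes $\omega$ truncated after its $k$-th jump; the case $k=n$ is then the assertion. We work in the relevant range $y<x_1$ (e.g. $y=0$, or $y=-d$ with all break-points above $y$), the general case following by the shift (\ref{shiftw}); note that $\omega^{(k+1)}$ agrees with $\omega^{(k)}$ on $(-\infty,x_{k+1}]$, whereas $\omega^{(k)}\equiv p_{k}$ and $\omega^{(k+1)}\equiv p_{k+1}$ on $(x_{k+1},\infty)$. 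The base case $k=0$ is immediate: since $\omega^{(0)}\equiv p_0$ is constant, the Laplace-transform identity for constant $\omega$ established above gives $\widetilde{\mathcal{W}}^{(\omega^{(0)})}(s)=(\mathbf{F}(s)-p_0\mathbf{I})^{-1}=\widetilde{\mathbf{W}}^{(p_0)}(s)$ by (\ref{matrix_W_def}), hence $\mathcal{W}^{(\omega^{(0)})}(x)=\mathbf{W}^{(p_0)}(x)$ and, via (\ref{shiftw}), $\mathcal{W}^{(\omega^{(0)})}(x,y)=\mathbf{W}^{(p_0)}(x-y)=\mathcal{W}^{(\omega)}_0(x,y)$.

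For the inductive step, fix $k\le n-1$ and assume $\mathcal{W}^{(\omega^{(k)})}(\cdot,y)=\mathcal{W}^{(\omega)}_{k}(\cdot,y)$ on $(y,\infty)$. The key ingredient is a \emph{localization} property: $\mathcal{W}^{(\omega)}(x,y)$ depends on $\omega$ only through its restriction to $[y,x]$. Indeed, restricting the defining equation (\ref{Woxy}) to arguments in $[y,x]$ produces a self-contained Volterra-type equation for $\mathcal{W}^{(\omega)}(\cdot,y)$ on $[y,x]$ whose kernel involves only $\omega|_{[y,x]}$, and after the shift (\ref{shiftw}) its solution is unique by Lemma \ref{lem unique}. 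Since $\omega^{(k+1)}=\omega^{(k)}$ on $(-\infty,x_{k+1}]$, localization yields $\mathcal{W}^{(\omega^{(k+1)})}(x,y)=\mathcal{W}^{(\omega^{(k)})}(x,y)=\mathcal{W}^{(\omega)}_{k}(x,y)$ for $y<x\le x_{k+1}$, which is exactly the asserted recursion on that range (the integral term being empty there). It remains to treat $x>x_{k+1}$.

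For $x>x_{k+1}$ the crucial move is to use the $\delta$-shifted integral equation (\ref{Wxyd}) with the choice $\delta=p_{k+1}$, for both $\mathcal{W}^{(\omega^{(k+1)})}$ and $\mathcal{W}^{(\omega^{(k)})}$. For $\mathcal{W}^{(\omega^{(k+1)})}$ the matrix $\boldsymbol{\omega}^{(k+1)}(z)-p_{k+1}\mathbf{I}$ vanishes for $z>x_{k+1}$, so the integral in (\ref{Wxyd}) collapses to $\int_y^{x_{k+1}}$; there $\omega^{(k+1)}=\omega^{(k)}$ and, by localization again, $\mathcal{W}^{(\omega^{(k+1)})}(z,y)=\mathcal{W}^{(\omega^{(k)})}(z,y)$, so this contribution coincides with the corresponding piece of the equation for $\mathcal{W}^{(\omega^{(k)})}$. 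In the equation for $\mathcal{W}^{(\omega^{(k)})}$, on the complementary interval $(x_{k+1},x]$ one has $\boldsymbol{\omega}^{(k)}(z)-p_{k+1}\mathbf{I}=(p_k-p_{k+1})\mathbf{I}$. Subtracting the two identities, the common contributions over $[y,x_{k+1}]$ cancel and one is left with
\[
\mathcal{W}^{(\omega^{(k+1)})}(x,y)-\mathcal{W}^{(\omega^{(k)})}(x,y)=(p_{k+1}-p_k)\int_{x_{k+1}}^{x}\mathbf{W}^{(p_{k+1})}(x-z)\,\mathcal{W}^{(\omega^{(k)})}(z,y)\,dz.
\]
Invoking the induction hypothesis $\mathcal{W}^{(\omega^{(k)})}(z,y)=\mathcal{W}^{(\omega)}_{k}(z,y)$ and comparing with the definition of $\mathcal{W}^{(\omega)}_{k+1}$ gives $\mathcal{W}^{(\omega^{(k+1)})}(x,y)=\mathcal{W}^{(\omega)}_{k+1}(x,y)$, closing the induction.

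I expect the localization property to be the only genuine obstacle; once it is in hand the remainder is bookkeeping with (\ref{Woxy}) and (\ref{Wxyd}), exploiting that the choice $\delta=p_{k+1}$ kills the integrand of the $\omega^{(k+1)}$-equation above $x_{k+1}$. Making localization rigorous amounts to checking that the restriction of (\ref{Woxy}) to $[y,x]$ really is closed and that its matrix Volterra kernel is locally bounded, so that the uniqueness in Lemma \ref{lem unique} applies verbatim after the shift. An alternative, more probabilistic derivation is also available: decompose the trajectory at the first passage over $x_{k+1}$ by the strong Markov property, using that above $x_{k+1}$ the killing rate is the constant $p_{k+1}$---which generates the factor $\mathbf{W}^{(p_{k+1})}$ through Theorem \ref{Two-sided}(i)---while below $x_{k+1}$ the excursions are governed by the $\omega^{(k)}$-killed process; this reproduces the same recursion, along the lines of \cite{LP16}, at the price of a more delicate justification of the excursion decomposition.
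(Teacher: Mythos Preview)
Your proof is correct and follows essentially the same route as the paper: both identify $\mathcal{W}^{(\omega)}_k$ with $\mathcal{W}^{(\omega^{(k)})}$, invoke the $\delta$-shifted equation (\ref{Wxyd}) with $\delta=p_{k+1}$ so that the integrand vanishes above $x_{k+1}$ for $\omega^{(k+1)}$, use the uniqueness in Lemma~\ref{lem unique} to obtain the agreement on $(y,x_{k+1}]$ (what you call ``localization''), and then subtract to isolate the recursion on $(x_{k+1},x]$. Your write-up is slightly more explicit about the inductive structure and the role of localization, but the ideas and tools are the same.
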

\begin{proof}

Denote $\omega^{(k)}(x):=p_0+\sum_{j=1}^k(p_j-p_{j-1})1_{\{x>x_j\}}$ with $\omega^{(0)}(x)=p_0$. From Equation (\ref{Wxyd}), we get that
\begin{align}
\mathcal{W}^{(\omega)}_{k}(x,y)&=\mathbf{W}^{(p_{k+1})}(x-y)+\int_{y}^{x} (\omega^{(k)}(z)-p_{k+1})\mathbf{W}^{(p_{k+1})}(x-z)\mathcal{W}_k^{(\omega)}(z,y)dz, \label{Wk} \\
\mathcal{W}^{(\omega)}_{k+1}(x,y)&=\mathbf{W}^{(p_{k+1})}(x-y)+\int_{y}^{x} (\omega^{(k+1)}(z)-p_{k+1})\mathbf{W}^{(p_{k+1})}(x-z)\mathcal{W}_{k+1}^{(\omega)}(z,y)dz. \label{Wk+1}
\end{align}
Note that $\omega^{(k+1)}(z)-p_{k+1}=0$ for $z>{x_{k+1}}$ and $\omega^{(k+1)}(z)=\omega^{(k)}(z)$ for $z\le x_{k+1}$. Thus from Lemma \ref{lem unique}, we have
\[
\mathcal{W}^{(\omega)}_{k+1}(x,y)=\mathcal{W}^{(\omega)}_{k}(x,y),
\]
for $x\le x_{k+1}$. Equation (\ref{Wk+1}) could be rewritten as
\begin{align*}
\mathcal{W}^{(\omega)}_{k+1}(x,y)
&=\mathbf{W}^{(p_{k+1})}(x-y)+\int_{y}^{x_k} (\omega^{(k+1)}(z)-p_{k+1})\mathbf{W}^{(p_{k+1})}(x-z)\mathcal{W}_{k+1}^{(\omega)}(z,y)dz\\
&=\mathbf{W}^{(p_{k+1})}(x-y)+\int_{y}^{x_k} (\omega^{(k)}(z)-p_{k+1})\mathbf{W}^{(p_{k+1})}(x-z)\mathcal{W}_{k+1}^{(\omega)}(z,y)dz\\
&=\mathcal{W}^{(\omega)}_{k}(x,y)-\int_{x_k}^{x} (\omega^{(k)}(z)-p_{k+1})\mathbf{W}^{(p_{k+1})}(x-z)\mathcal{W}_k^{(\omega)}(z,y)dz,
\end{align*}
where the last step uses (\ref{Wk}). The proof is completed by noticing that $\omega^{(k)}(z)-p_{k+1}=p_{k}-p_{k+1}$ for $z>x_{k+1}$.
\end{proof}

Note also that the similar considerations will lead to the same result for the second $\omega$-scale matrix $\mathcal{Z}^{(\omega)}$.

In the next proposition, we will compute the matrix $\mathcal{W}^{(\omega)}$ for one particular case.
\begin{proposition}
Let $(X,J)$ be a Markov modulated Brownian motion with $\mu_i \in \mathbb{R}$ and $\sigma_{i}^2 > 0$ for all $i \in E$.
Assume that ($n=1$) $\lbrace p_{j} \rbrace_{j=0}^{n} = \lbrace p_{0},p_{1}\rbrace$ and $\lbrace x_{j} \rbrace_{j = 1}^{n} = \lbrace x_{1} \rbrace$ with $p_0,p_1,x_1$ being positive numbers. Then for $x \leq x_1$,
\begin{equation}
\nonumber
\WWomega (x,y) = \bo{W}^{(p_0)}(x-y),
\end{equation}
and for $x > x_1$,
\begin{align*}
\nonumber
\WWomega (x,y) =& \Bigl(e^{-\Lp_{p_{1}}(x-x_{1})}\Bigl(\Lp_{p_{1}}+\Lm_{p_{1}}\Bigr)^{-1}\Lm_{p_{1}} + e^{\Lm_{p_{1}}(x-x_{1})}\Bigl(\Lp_{p_{1}}+\Lm_{p_{1}}\Bigr)^{-1} \Lp_{p_{1}}\Bigr)\bo{W}^{(p_{0})}(x_{1}-y) \\&- \bo{W}^{(p_{1})}(x-x_{1})\Delta_{\frac{\sigma^2}{2}}\bo{W}^{(p_{0})'}(x_{1}-y).
\end{align*}
\end{proposition}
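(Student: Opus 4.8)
The plan is to turn the claim, for $x>x_{1}$, into an explicitly solvable linear second-order matrix ODE with initial data at $x=x_{1}$, and then to match the fundamental solutions of that ODE against the MMBM representation $(\ref{MMBM Scale matrix formula})$. The case $x\le x_{1}$ is immediate: Proposition~$\ref{Prop_step_omega}$ with $n=1$ gives $\WWomega(x,y)=\mathcal{W}_{1}^{(\omega)}(x,y)$, and from the recursion $\mathcal{W}_{1}^{(\omega)}(x,y)=\mathcal{W}_{0}^{(\omega)}(x,y)=\bo{W}^{(p_{0})}(x-y)$ whenever $y<x\le x_{1}$. For $x>x_{1}$ the same proposition yields the integral representation
\[
G(x):=\WWomega(x,y)=\bo{W}^{(p_{0})}(x-y)+(p_{1}-p_{0})\int_{x_{1}}^{x}\bo{W}^{(p_{1})}(x-z)\,\bo{W}^{(p_{0})}(z-y)\,dz,
\]
and it is this $G$ that I would analyse.

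First I would record what is needed about MMBM scale matrices. From $(\ref{matrix_W_def})$, $(\mathbf{F}(s)-q\mathbf{I})\widetilde{\bo{W}}^{(q)}(s)=\mathbf{I}$ with $\mathbf{F}(s)=\frac{1}{2}\Delta_{\bo{\sigma}}^{2}s^{2}+\Delta_{\bo{\mu}}s+\bo{Q}$; by $(\ref{MMBM Scale matrix formula})$ together with $\bo{\Xi}_{q}^{-1}=-\frac{1}{2}\Delta_{\bo{\sigma}}^{2}(\bo{\Lambda}_{q}^{+}+\bo{\Lambda}_{q}^{-})$, the matrix $\bo{W}^{(q)}$ is $C^{2}$ on $(0,\infty)$ with $\bo{W}^{(q)}(0)=\mathbf{0}$ and $\bo{W}^{(q)\prime}(0)=\Delta_{2/\bo{\sigma}^{2}}$. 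Inverting the transform then yields
\[
\tfrac{1}{2}\Delta_{\bo{\sigma}}^{2}\bo{W}^{(q)\prime\prime}(x)+\Delta_{\bo{\mu}}\bo{W}^{(q)\prime}(x)+(\bo{Q}-q\mathbf{I})\bo{W}^{(q)}(x)=\mathbf{0},\qquad x>0.
\]
Differentiating the integral representation of $G$ twice by the Leibniz rule (the first boundary term vanishes since $\bo{W}^{(p_{1})}(0)=\mathbf{0}$, the second produces $\bo{W}^{(p_{1})\prime}(0)=\Delta_{2/\bo{\sigma}^{2}}$, with $\frac{1}{2}\Delta_{\bo{\sigma}}^{2}\Delta_{2/\bo{\sigma}^{2}}=\mathbf{I}$), then substituting the above ODE for $\bo{W}^{(p_{0})}$ and for $\bo{W}^{(p_{1})}$ and using $\bo{Q}-p_{1}\mathbf{I}=(\bo{Q}-p_{0}\mathbf{I})-(p_{1}-p_{0})\mathbf{I}$, I expect all terms to cancel, leaving
\[
\tfrac{1}{2}\Delta_{\bo{\sigma}}^{2}G''(x)+\Delta_{\bo{\mu}}G'(x)+(\bo{Q}-p_{1}\mathbf{I})G(x)=\mathbf{0}\quad\text{on }(x_{1},\infty),
\]
together with $G(x_{1})=\bo{W}^{(p_{0})}(x_{1}-y)$ and (again using $\bo{W}^{(p_{1})}(0)=\mathbf{0}$) $G'(x_{1})=\bo{W}^{(p_{0})\prime}(x_{1}-y)$.

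Next I would solve this initial value problem. By $(\ref{Characteristic equation})$, both $x\mapsto e^{-\bo{\Lambda}_{p_{1}}^{+}(x-x_{1})}$ and $x\mapsto e^{\bo{\Lambda}_{p_{1}}^{-}(x-x_{1})}$ solve the homogeneous equation, and since $\bo{\Lambda}_{p_{1}}^{+}+\bo{\Lambda}_{p_{1}}^{-}$ is invertible (it appears in $\bo{\Xi}_{p_{1}}^{-1}$), their columns form a fundamental system; hence $G(x)=e^{-\bo{\Lambda}_{p_{1}}^{+}(x-x_{1})}\mathbf{M}_{1}+e^{\bo{\Lambda}_{p_{1}}^{-}(x-x_{1})}\mathbf{M}_{2}$ for constant matrices $\mathbf{M}_{1},\mathbf{M}_{2}$. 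Imposing $\mathbf{M}_{1}+\mathbf{M}_{2}=\bo{W}^{(p_{0})}(x_{1}-y)$ and $-\bo{\Lambda}_{p_{1}}^{+}\mathbf{M}_{1}+\bo{\Lambda}_{p_{1}}^{-}\mathbf{M}_{2}=\bo{W}^{(p_{0})\prime}(x_{1}-y)$ gives
\[
\mathbf{M}_{1}=(\bo{\Lambda}_{p_{1}}^{+}+\bo{\Lambda}_{p_{1}}^{-})^{-1}\big(\bo{\Lambda}_{p_{1}}^{-}\bo{W}^{(p_{0})}(x_{1}-y)-\bo{W}^{(p_{0})\prime}(x_{1}-y)\big),\quad
\mathbf{M}_{2}=(\bo{\Lambda}_{p_{1}}^{+}+\bo{\Lambda}_{p_{1}}^{-})^{-1}\big(\bo{\Lambda}_{p_{1}}^{+}\bo{W}^{(p_{0})}(x_{1}-y)+\bo{W}^{(p_{0})\prime}(x_{1}-y)\big).
\]
Substituting back, the terms proportional to $\bo{W}^{(p_{0})}(x_{1}-y)$ reassemble exactly into the first bracket of the asserted formula, while the terms proportional to $\bo{W}^{(p_{0})\prime}(x_{1}-y)$ collapse to $\big(e^{\bo{\Lambda}_{p_{1}}^{-}(x-x_{1})}-e^{-\bo{\Lambda}_{p_{1}}^{+}(x-x_{1})}\big)(\bo{\Lambda}_{p_{1}}^{+}+\bo{\Lambda}_{p_{1}}^{-})^{-1}\bo{W}^{(p_{0})\prime}(x_{1}-y)$; writing $e^{-\bo{\Lambda}_{p_{1}}^{+}u}-e^{\bo{\Lambda}_{p_{1}}^{-}u}=\bo{W}^{(p_{1})}(u)\,\bo{\Xi}_{p_{1}}^{-1}$ and using $\bo{\Xi}_{p_{1}}^{-1}(\bo{\Lambda}_{p_{1}}^{+}+\bo{\Lambda}_{p_{1}}^{-})^{-1}=-\Delta_{\bo{\sigma}^{2}/2}$ then identifies this contribution with the last term of the asserted formula (keeping track of the sign conventions of $(\ref{MMBM Scale matrix formula})$), which gives the stated expression for $x>x_{1}$.

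The main obstacle is the middle step: justifying the differentiation under the integral sign and carrying through the cancellation that turns the integral representation into the clean ODE with the stated boundary data — this uses $\bo{W}^{(p_{1})}(0)=\mathbf{0}$, $\bo{W}^{(p_{1})\prime}(0)=\Delta_{2/\bo{\sigma}^{2}}$ and $\frac{1}{2}\Delta_{\bo{\sigma}}^{2}\Delta_{2/\bo{\sigma}^{2}}=\mathbf{I}$ repeatedly. Once that is settled, solving the constant-coefficient ODE and the final regrouping are routine linear algebra in the MMBM building blocks $\bo{\Lambda}_{p_{1}}^{\pm}$, $\bo{\Xi}_{p_{1}}$ and $\bo{W}^{(p_{1})}$; a secondary point worth care is confirming the invertibility of $\bo{\Lambda}_{p_{1}}^{+}+\bo{\Lambda}_{p_{1}}^{-}$ for $p_{1}>0$ and the precise signs in the last identification.
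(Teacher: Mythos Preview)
Your argument is correct and takes a genuinely different route from the paper. The paper expands both factors in the convolution
\[
(p_{1}-p_{0})\int_{x_{1}}^{x}\bo{W}^{(p_{1})}(x-z)\bo{W}^{(p_{0})}(z-y)\,dz
\]
via $(\ref{MMBM Scale matrix formula})$, producing four integrals of the form $\int_{x_{1}}^{x}e^{A(x-z)}\Xi_{p_{1}}e^{B(z-y)}dz$; each is identified by guessing an ansatz $\bo{C}e^{B(x-y)}-e^{A(x-x_{1})}\bo{C}e^{B(x_{1}-y)}$, which forces $\bo{C}$ to solve a Sylvester equation $A\bo{C}-\bo{C}B=\Xi_{p_{1}}$, and the paper then verifies the four explicit candidates using the relations $(\ref{Relation C Lp})$--$(\ref{Relation D Lm})$ before recombining everything. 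Your approach bypasses all of this: you differentiate the integral representation and use the second-order ODE satisfied by $\bo{W}^{(q)}$ to show directly that $G$ solves the same ODE as $\bo{W}^{(p_{1})}$ with explicit data at $x_{1}$, then match against the fundamental pair $e^{-\bo{\Lambda}_{p_{1}}^{+}(x-x_{1})}$, $e^{\bo{\Lambda}_{p_{1}}^{-}(x-x_{1})}$. This is cleaner --- no Sylvester equations, no guess-and-check --- and it generalises more transparently to longer step functions (each new interval just resets the constant $p_{k}$ in the ODE with new data at $x_{k}$). The paper's route, on the other hand, keeps closer contact with the explicit matrices $\bo{\Lambda}_{p_{0}}^{\pm}$ and makes the dependence on both levels $p_{0},p_{1}$ visible throughout.

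One point worth flagging: carrying your computation through gives
\[
(e^{\bo{\Lambda}_{p_{1}}^{-}(x-x_{1})}-e^{-\bo{\Lambda}_{p_{1}}^{+}(x-x_{1})})(\bo{\Lambda}_{p_{1}}^{+}+\bo{\Lambda}_{p_{1}}^{-})^{-1}\bo{W}^{(p_{0})\prime}(x_{1}-y)
=+\,\bo{W}^{(p_{1})}(x-x_{1})\Delta_{\bo{\sigma}^{2}/2}\bo{W}^{(p_{0})\prime}(x_{1}-y),
\]
since $\bo{\Xi}_{p_{1}}^{-1}(\bo{\Lambda}_{p_{1}}^{+}+\bo{\Lambda}_{p_{1}}^{-})^{-1}=-\Delta_{\bo{\sigma}^{2}/2}$. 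This is the sign that matches the initial condition $G'(x_{1})=\bo{W}^{(p_{0})\prime}(x_{1}-y)$; the minus sign printed in the statement appears to be a typo (one can check in one dimension that the displayed formula gives $G'(x_{1})=-\bo{W}^{(p_{0})\prime}(x_{1}-y)$). The paper's own intermediate computation, if completed, yields the same $+$ sign, so your hedging about ``sign conventions'' is well placed.
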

\begin{proof}
Note that the case when $x \leq x_{1}$ is a straightforward conclusion from Proposition \ref{Prop_step_omega}. For $x > x_{1}$, from previous proposition and (\ref{MMBM Scale matrix formula}), we have
\begin{align}
\mathcal{W}_{1}(x,y) =& \mathcal{W}_{0}(x,y) + (p_{1}-p_{0})\int_{x_{1}}^{x} \bo{W}^{(p_{1})}(x-z)\mathcal{W}_{0}(z,y)dz\nonumber
\\=&  \bo{W}^{(p_{0})}(x-y) + (p_{1}-p_{0})\int_{x_{1}}^{x}\Bigl(e^{-\Lp_{p_{1}} (x-z)}\Xi_{p_{1}}e^{-\Lp_{p_{0}}(z-y)} \label{Step omega-scale W1 integral formula}
\\ &- e^{-\Lp_{p_{1}}(x-z)}\Xi_{p_{1}}e^{\Lm_{p_{0}} (z-y)} - e^{\Lm_{p_{1}}(x-z)}\Xi_{p_{1}}e^{-\Lp_{p_{0}}(z-y)} + e^{\Lm_{p_{1}}(x-z)}\Xi_{p_{1}}e^{\Lm_{p_{0}}(z-y)}\Bigr)dz \;\Xi_{p_{0}}.\nonumber
\end{align}
We start from identifying the following integral appearing in Equation \eqref{Step omega-scale W1 integral formula}:
\begin{equation}\label{Step omega : M integral}
\int_{x_{1}}^{x} \Bigl(e^{-\Lp_{p_{1}} (x-z)}\Xi_{p_{1}}e^{-\Lp_{p_{0}}(z-y)}\Bigr)dz.
\end{equation}
Consider $($\ref{Step omega : M integral}$)$ as a function $M_{1}:A \rightarrow \mathbb{R}^{N \times N}$, where
\begin{equation}
\nonumber
A = \lbrace (x,y) : x \geq x_{1}, x> y\rbrace,
\end{equation} and $N$ is the size of the matrix $\bo{W}^{(p_{0})}$. Then
\begin{equation}
\nonumber
M_{1}(x,y) = \int_{x_{1}}^{x} \Bigl(e^{-\Lp_{p_{1}} (x-z)}\Xi_{p_{1}}e^{-\Lp_{p_{0}}(z-y)}\Bigr)dz=e^{-\Lp_{p_{1}} x}\int_{x_{1}}^{x} \Bigl(e^{\Lp_{p_{1}} z}\Xi_{p_{1}}e^{-\Lp_{p_{0}}z}\Bigr)dz \ e^{\Lp_{p_{0}}y}.
\end{equation}
Let \begin{equation}
\nonumber
\quad K_{1}(x):=M_{1}(x,x_{1}) =   \int_{x_{1}}^{x} \Bigl(e^{-\Lp_{p_{1}} (x-z)}\Xi_{p_{1}}e^{-\Lp_{p_{0}}(z-x_{1})}\Bigr)dz.
\end{equation}
The derivative of $K_{1}(x)$ equals
\begin{equation}\label{K prime}
K^{\prime}_{1}(x) = -\Lp_{p_{1}} K_{1}(x) + \Xi_{p_{1}}e^{-\Lp_{p_{0}} (x-x_{1})} \quad \textrm{with the boundary condition}
\quad K_{1}(x_{1}) = \bo{0}.
\end{equation}
We will prove that the solution of above differential equation is of the form
\begin{equation}\label{Hyphothesis for K}
K_{1}(x) = \bo{C}e^{-\Lp_{p_0}(x-x_{1})} - e^{-\Lp_{p_1}(x-x_{1})}\bo{C},
\end{equation}
where $\bo{C}$ is some constant matrix. To do this, we need to put our proposition for $K_1(x)$ into $(\ref{K prime})$ and after some calculation we get that (\ref{Hyphothesis for K}) is indeed our solution if the following equation holds true:
\begin{equation}\label{Sylvester eq C}
\Lp_{p_{1}}\bo{C} - \bo{C} \Lp_{p_{0}} =\Xi_{p_{1}}.
\end{equation}
The above equality is an example of well known Sylvester equation. Usually to solve equations of this type we must use numerical methods, however in this case we can guess the formula for $\bo{C}$:
\begin{equation}
\nonumber
\bo{C} = -\Bigl(\Lp_{p_{1}}+\Lm_{p_{1}}\Bigr)^{-1} \Bigl(\Lp_{p_{0}}+\Lm_{p_{1}}\Bigr)\cdot \frac{1}{p_{1}-p_{0}}.
\end{equation}
We need to check if such formula for $\bo{C}$ is indeed correct. Therefore, from equation (\ref{Sylvester eq C}) one can get that
\begin{equation}
\nonumber
\begin{split}
&-\Lp_{p_{1}} \Bigl(\Lp_{p_{1}}+\Lm_{p_{1}}\Bigr)^{-1} \Bigl(\Lp_{p_{0}}+\Lm_{p_{1}}\Bigr)\cdot \frac{1}{p_{1}-p_{0}} + \Bigl(\Lp_{p_{1}}+\Lm_{p_{1}}\Bigr)^{-1} \Bigl(\Lp_{p_{0}}+\Lm_{p_{1}}\Bigr)\cdot \frac{1}{p_{1}-p_{0}} \Lp_{p_{0}} = \Xi_{p_{1}}\\
&\Bigl[\Bigl(\Lp_{p_{1}} + \Lm_{p_{1}}\Bigr)\Lp_{p_{1}}\Bigl(\Lp_{p_{1}} + \Lm_{p_{1}}\Bigr)^{-1} \Bigl(\Lp_{p_{0}}+\Lm_{p_{1}}\Bigr) - \Bigl(\Lp_{p_{0}} + \Lm_{p_{1}}\Bigr)\Lp_{p_{0}}\Bigr]\cdot\frac{1}{p_{1}-p_{0}} = \Delta_{\frac{2}{\sigma^2}} \\
&\Bigl[\Bigl(\Delta_{\frac{2\mu}{\sigma^2}} + \Lm_{p_{1}}\Bigr)\Bigl(\Lp_{p_{0}} + \Lm_{p_{1}}\Bigr) - (\Lp_{p_{0}})^2 - \Lm_{p_{1}}\Lp_{p_{0}}\Bigr]\cdot \frac{1}{p_{1}-p_{0}} = \Delta_{\frac{2}{\sigma^2}}\\
&\Bigl[\Delta_{\frac{2\mu}{\sigma^2}}\Lp_{p_{0}} + \Delta_{\frac{2\mu}{\sigma^2}}\Lm_{p_{1}} + (\Lm_{p_{1}})^2 - (\Lp_{p_{0}})^2\Bigr]\cdot \frac{1}{p_{1}-p_{0}} = \Delta_{\frac{2}{\sigma^2}}\\
&\Bigl[\Bigl(\bo{Q}-p_{0}\textbf{I}\Bigr)
-\Bigl(\bo{Q}-p_{1}\textbf{I}\Bigr)\Bigr]\cdot \frac{1}{p_{1}-p_{0}} = \textbf{I}\\
&\textbf{I} = \textbf{I}.
\end{split}
\end{equation}
In the second line of above calculations we used the definition of $\Xi_{p_1}$. Third equality follows from the second by the relation (\ref{Relation D Lm}). Finally, to get the fifth equation we used (\ref{Relation C Lp}) and again (\ref{Relation D Lm}).\\
Therefore, $K_1(x)$ is a solution to differential equation \ref{K prime}.
Returning to $M_{1}(x,y)$ it is now straightforward to guess and check the formula for $M_{1}$, namely
\begin{equation}
\nonumber
M_{1}(x,y) = \bo{C} e^{-\Lp_{p_{0}}(x-y)} - e^{-\Lp_{p_{1}}(x-x_{1})}\bo{C} e^{-\Lp_{p_{0}}(x_{1}-y)}.
\end{equation}
Now, using similar reasoning as for deriving $M_{1}$ we can identify other integrals appearing in Equation \eqref{Step omega-scale W1 integral formula}:
\begin{equation}
\begin{split}
\nonumber
&M_{2}(x,y) = \int_{x_{1}}^{x} e^{-\Lp_{p_{1}}(x-z)}\Xi_{p_{1}}e^{\Lm_{p_{0}}(z-y)}dz, \\
&M_{3}(x,y) = \int_{x_{1}}^{x} e^{\Lm_{p_{1}}(x-z)}\Xi_{p_{1}}e^{-\Lp_{p_{0}}(z-y)}dz, \\
&M_{4}(x,y) = \int_{x_{1}}^{x} e^{\Lm_{p_{1}}(x-z)}\Xi_{p_{1}}e^{\Lm_{p_{0}}(z-y)}dz.
\end{split}
\end{equation}
Precisely,
\begin{equation}
\nonumber
\begin{split}
&M_{1}(x,y) =  \bo{C} e^{-\Lp_{p_{0}}(x-y)} - e^{-\Lp_{p_{1}}(x-x_{1})}\bo{C} e^{-\Lp_{p_{0}}(x_{1}-y)},\\
&M_{2}(x,y) = \bo{D} e^{\Lm_{p_{0}}(x-y)} - e^{-\Lp_{p_{1}}(x-x_{1})}\bo{D}e^{\Lm_{p_{0}}(x_{1}-y)}, \\
& M_{3}(x,y) = \bo{E} e^{-\Lp_{p_{0}}(x-y)} - e^{\Lm_{p_{1}}(x-x_{1})}\bo{E}e^{-\Lp_{p_{0}}(x_{1}-y)}, \\
& M_{4}(x,y)= \bo{F}e^{\Lm_{p_{0}} (x-y)} - e^{\Lm_{p_{1}}(x-x_{1})}\bo{F}e^{\Lm_{p_{0}}(x_{1}-y)},
\end{split}
\end{equation}
where matrices $\bo{C},\bo{D},\bo{E},\bo{F}$ are given by
\begin{equation}
\nonumber
\begin{split}
&\bo{C} = -\Bigl(\Lp_{p_{1}} + \Lm_{p_{1}}\Bigr)^{-1}\Bigl(\Lp_{p_{0}}+\Lm_{p_{1}}\Bigr)\cdot \frac{1}{p_{1}-p_{0}}, \\
&\bo{D} = \Bigl(\Lp_{p_{1}} + \Lm_{p_{1}}\Bigr)^{-1}\Bigl(\Lm_{p_{0}}-\Lm_{p_{1}}\Bigr)\cdot \frac{1}{p_{1}-p_{0}},\\
&\bo{E} = -\Bigl(\Lp_{p_{1}} + \Lm_{p_{1}}\Bigr)^{-1}\Bigl(\Lp_{p_{0}}-\Lp_{p_{1}}\Bigr)\cdot \frac{1}{p_{1}-p_{0}},\\
&\bo{F} = \Bigl(\Lp_{p_{1}} + \Lm_{p_{1}}\Bigr)^{-1}\Bigl(\Lm_{p_{0}}+\Lp_{p_{1}}\Bigr)\cdot \frac{1}{p_{1}-p_{0}}.
\end{split}
\end{equation}

Thus from \eqref{Step omega-scale W1 integral formula}, for $x > x_{1}$,
\begin{align*}
\WW_{1}^{(\omega)}(x,y) =& \Bigl(e^{-\Lp_{p_{0}}(x-y)}-e^{\Lm_{p_{0}}(x-y)}\Bigr)\Xi_{p_{0}}\\
& + (p_{1}-p_{0})\Bigl(M_{1}(x,y)-M_{2}(x,y) -  M_{3}(x,y)
+ M_{4}(x,y)\Bigr) \Xi_{p_{0}} \\
=& \Bigl[\Bigl(\textbf{I} - (p_{1}-p_{0})\Bigl(\bo{E}-\bo{C}\Bigr)\Bigr)e^{-\Lp_{p_{0}}(x-y)} -\Bigl(\textbf{I} - (p_{1}-p_{0})\Bigl(\bo{D}+\bo{F}\Bigr)\Bigr) e^{\Lm_{p_{0}}(x-y)} \\
&+ (p_{1}-p_{0})\Bigl( e^{\Lm_{p_{1}}(x-x_{1})}
\Bigl(\bo{E}e^{-\Lp_{p_{0}}(x_{1}-y)} - \bo{F} e^{\Lm_{p_{0}}(x_{1}-y)}\Bigr) \\
&- e^{-\Lp_{p_{1}}(x-x_{1})}
\Bigl(\bo{C}e^{-\Lp_{p_{0}}(x_{1}-y)} - \bo{D} e^{\Lm_{p_{0}}(x_{1}-y)}\Bigr)\Bigr)\Bigr]\Xi_{p_{0}}\\ =&
\Bigl[e^{-\Lp_{p_{1}}(x-x_{1})}\Bigl(\Lp_{p_{1}}+\Lm_{p_{1}}\Bigr)^{-1}\Lm_{p_{1}}
+ e^{\Lm_{p_{1}}(x-x_{1})}\Bigl(\Lp_{p_{1}}+\Lm_{p_{1}}\Bigr)^{-1} \Lp_{p_{1}}\Bigr]\bo{W}^{(p_{0})}(x_{1}-y)
\\&- \bo{W}^{(p_{1})}(x-x_{1})\Delta_{\frac{\sigma^2}{2}}\bo{W}^{(p_{0})\prime}(x_{1}-y),
\end{align*}
where we notice the facts that
\begin{equation}
\nonumber
(p_{1}-p_{0})\Bigl(\bo{E}-\bo{C}\Bigr) = \textbf{I}, \quad (p_{1}-p_{0})\Bigl(\bo{D}+\bo{F}\Bigr) = \textbf{I}.
\end{equation}
This completes the proof of this proposition. Note that the uniqueness of this result is straightforward conclusion from Lemma \ref{lem unique}
\end{proof}
\begin{remark}
\rm In general, if we choose to divide $\mathbb{R}$ into more intervals,
similar idea could be used for the computations of $\omega$-scale matrix.
\end{remark}

We take the following parameters for the numerical analysis:
\begin{align*}
\nonumber
&\Delta_{\bo{\sigma}} = \left(\begin{array}{cc} 0.7 & 0 \\ 0 & 0.85 \end{array}\right), \quad\Delta_{\bo{\mu}} = \left(\begin{array}{cc} 0.1 & 0 \\ 0 & -0.1 \end{array}\right), \quad \bo{Q} = \left(\begin{array}{cc} -0.1 & 0.1 \\ 0.3 & -0.3 \end{array}\right),\\& p_{0} = 0.25,\quad p_{1} = 0.03, \quad x_{1} = 4.
\end{align*}

Note that we do not assume that $\Delta_{\bo{\mu}} \neq \bo{0}$ and thus we cannot use the formula $(\ref{MMBM scale matrix analytical mu = 0})$.
Therefore for the computations, we used numerical package \cite{I11} instead. 

From Figure \ref{Fig3} one can see that in every cell we have interesting relation that $\WW^{(\omega)}$ lies between $\bo{W}^{(p_0)}$ and $\bo{W}^{(p_1)}$ and this functions are similar in shape.

\begin{figure}[H]
\centering
\includegraphics[width=16cm]{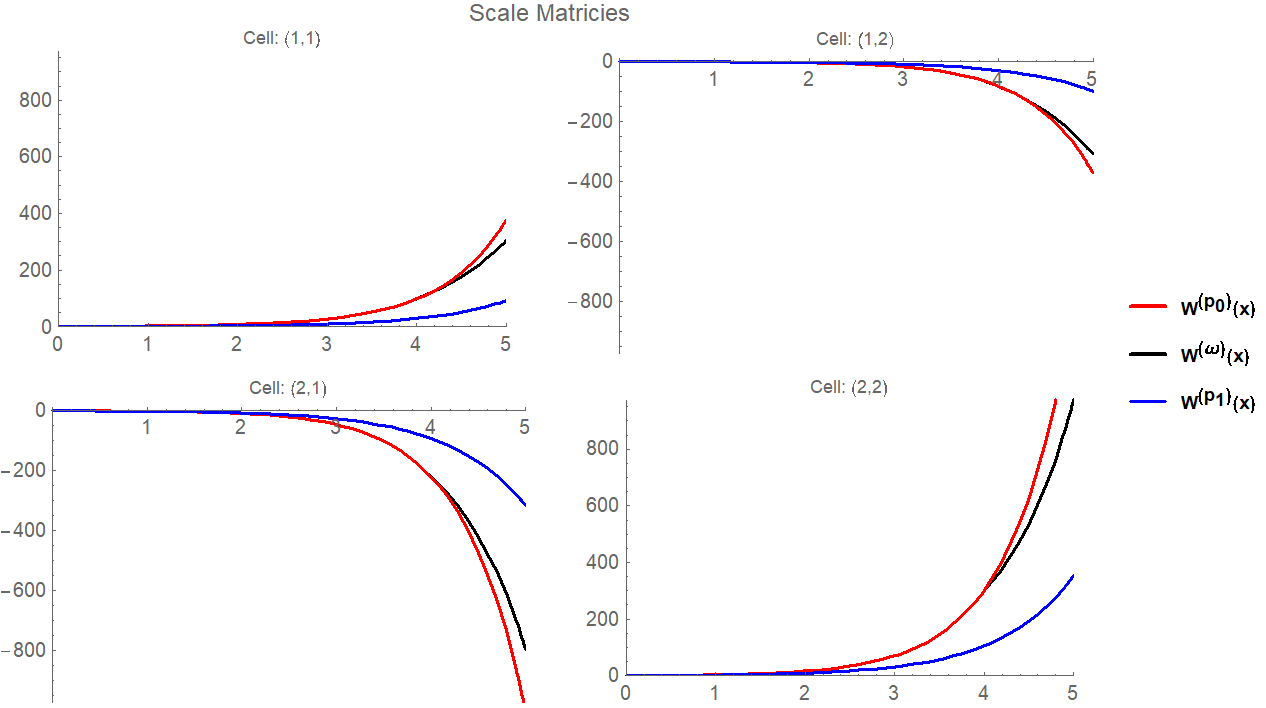}
\caption{Comparison between entries of scale matrix $\bo{W}^{(p_0)}$,$\bo{W}^{(p_1)}$ and entries of $\omega$-scale matrix function $\WWomega$}\label{Fig3}
\end{figure}

\subsection{Omega model}
In Section \ref{Div}, we considered dividend problem in the general Markov additive model and we derived the formula for the value function in the terms of $\omega$-scale matrix.
In this subsection, we will analyze it for the specific choice of $\omega$ function:
\[
\omega(i,x) := \omega(x)=\Bigl (\gamma_0+\gamma_1(x+d) \Bigr)1_{\{-d\le x\le 0\}}, \quad \text{for all } i \in E,
\]
and for MAP being a Markov modulated Brownian motion.
Similar model for the L\'evy-risk process was analyzed in Li and Palmowski \cite{LP16}.

Fix a constant force of interest $\delta \geq 0$. Using (\ref{Woxy}) we obtain that  $\mathcal{W}^{(\omega + \delta)}$ satisfy following equation: for $x\in [-d,0]$,
\begin{align*}
\WW^{(\omega+\delta)}(x,-d)&=\mathbf{W}(x+d)+\int_{-d}^{x} (\omega(z)+\delta)\mathbf{W}(x-z)\mathcal{W}^{(\omega)}(z,-d)dz\\
&=\mathbf{W}(x+d)+\int_{0}^{x+d} (\omega(y-d)+\delta)\mathbf{W}(x+d-y)\mathcal{W}^{(\omega)}(y-d,-d)dy \\
&=\mathbf{W}^{(\gamma_0 +\delta)}(x+d)+ \gamma_1 \int_{0}^{x+d}y\mathbf{W}^{(\gamma_0 + \delta)}(x+d-y)\mathcal{W}^{(\omega)}(y-d,-d)dy. 
\end{align*}

Now, let $z=x+d\ge 0$, $\omega_1(z) := \omega(x+d) = (\gamma_0 + \gamma_1 z ) \textbf{1}_{\lbrace 0 \leq z \leq d \rbrace}$ and
\begin{align}\label{G eq}
\mathbf{G}(z):=\WW^{(\omega + \delta)}(z-d,-d)=\WW^{(\omega + \delta)}(x,-d),
\end{align}
Then we  can rewrite equation for $\WW^{(\omega + \delta)}$ as
\begin{equation*}
\bo{G}(z) = \bo{W}^{( \gamma_0 + \delta)}(z) + \gamma_1 \int_{0}^{z}  y \bo{W}^{(\gamma_0 + \delta)}(z-y)\bo{G}(y)dy.
\end{equation*}
From equation $(\ref{MMBM Scale matrix formula})$ for $\bo{W}^{( \gamma_0 + \delta)}$ we obtain that
%
\begin{equation}\label{diffScaleZero}
\Bigl(\frac{d}{dz} - \bo{C}_{\gamma_{0}+\delta}\Bigr) \Bigl(\frac{d}{dz} + \Lambda_{\gamma_{0}+\delta}^{+}\Bigr) \bo{W}^{(\gamma_0 + \delta)}(z) = \bo{0},
\end{equation}
where $\bo{C}_{\gamma_{0}+\delta} = (\Lpgd + \Lmgd)\Lmgd(\Lpgd+\Lmgd)^{-1}$.

Starting from $(\ref{G eq})$, for $z \in [0,d]$ $($ or equivalently for $x \in [-d,0])$ we have
\begin{equation}\label{diffMain}
\Bigl(\frac{d}{dz} - \bo{C}_{\gamma_{0}+\delta}\Bigr) \Bigl(\frac{d}{dz} + \Lambda_{\gamma_{0}+\delta}^{+}\Bigr)\bo{G}(z) = \gamma_{1}z\Delta_{\frac{2}{ \bo{\sigma}^{2}}}\bo{G}(z)
\end{equation}
with the boundary conditions
\begin{equation*}
\bo{G}(0) = \bo{0}, \qquad \bo{G^{\prime}}(0) = \Delta_{\frac{2}{\bo{\sigma}^{2}}}.
\end{equation*}
For better understanding of the nature of the above differential matrix equation we rewrite it into the following form
\begin{equation*}\label{Finall Eq}
\bo{G}^{\prime \prime}(z) + \Bigl(\Lpgd - C_{\gamma_{0}+\delta}\Bigr)\bo{G}^{\prime}(z) - \Bigl(\bo{C}_{\gamma_{0}+\delta}\Lpgd + 2\gamma_{1}z\Delta_{1 / \bo{\sigma}^{2}}\Bigr)\bo{G}(z) = \bo{0},
\end{equation*}
which could be simplified to, by $($\ref{Relation C Lp}$)$,\\
\begin{equation*}\label{Finall Eq convinent form}
\Delta_{\frac{\bo{\sigma}^2}{2}}\bo{G}^{\prime \prime}(z) + \Delta_{\bo{\mu}}\bo{G}^{\prime}(z) + \bo{Q}\bo{G}(z) - (\omega_1(z)+\delta)\bo{G}(z) = \bo{0}, \quad \text{for } z \in [0,d].
\end{equation*}

Now, we will treat the case of $z \geq d$ (or equivalently for $x \geq 0$). We first rewrite formula
\begin{equation*}\label{WWomega when z greater than d}
\WW^{(\omega + \delta)}(x;-d) = \bo{W}(x+d) + \int_{0}^{x+d}\omega(y-d)\bo{W}(x+d-y)\WW^{(\omega + \delta)}(y-d;-d)dy, \quad \text{for } x\geq 0,
\end{equation*}
in the terms of matrix $\bo{G}(z)$ with respect to $z \geq d$:
\begin{equation}
\nonumber
\bo{G}(z) = \bo{W}(z) + \int_{0}^{d}(\delta+ (\gamma_0 + \gamma_1 y))\bo{W}(z-y)\bo{G}(y)dy + \delta \int_{d}^{z} \bo{W}(z-y) \bo{G}(y) dy.
\end{equation}
Similar to $($\ref{diffScaleZero}$)$ and $($\ref{diffMain}$)$, we have, respectively
\begin{equation}
\nonumber
\Bigl(\frac{d}{dz}- \bo{C}\Bigr)\Bigl(\frac{d}{dz} + \Lp\Bigr)\bo{W}(z) = \bo{0},
\end{equation}
and 
\begin{equation*}
\Bigl(\frac{d}{dz}- \bo{C}\Bigr)\Bigl(\frac{d}{dz} + \Lp\Bigr)\bo{G}(z) = \delta \Delta_{\frac{2}{\bo{\sigma}^2}}\bo{G}(z), \quad \text{for } z \geq d,
\end{equation*}
where $\bo{C} = (\Lp + \Lm)\Lm(\Lp+\Lm)^{-1}$.
Using $($\ref{Relation C Lp}$)$ for $q = 0$, we get that
\begin{equation*}
\Delta_{\frac{\bo{\sigma}^2}{2}}\bo{G}^{\prime \prime}(z) + \Delta_{\bo{\mu}}\bo{G}^{\prime}(z) + \bo{Q}\bo{G}(z) - \delta\bo{G}(z) = \bo{0}, \quad \text{for } z > d.
\end{equation*}
Summarizing, $\bo{G}(z)$ satisfies the following differential equations:
\begin{equation*} \begin{array}{rl}
 \Delta_{\frac{\bo{\sigma}^2}{2}}\bo{G}^{\prime \prime}(z) + \Delta_{\bo{\mu}}\bo{G}^{\prime}(z) + \bo{Q}\bo{G}(z) - (\omega_1(z)+\delta) \bo{G}(z) = \bo{0},& \textrm{for $z \in [0,d]$}
\\ \Delta_{\frac{\bo{\sigma}^2}{2}}\bo{G}^{\prime \prime}(z) + \Delta_{\bo{\mu}}\bo{G}^{\prime}(z) + \bo{Q}\bo{G}(z) - \delta \bo{G}(z)= \bo{0}, & \textrm{for $z>d$}
\end{array} .
\end{equation*}
with the boundary conditions $\bo{G}(0) = \bo{0},$ and $\bo{G}^{\prime}(0) = \Delta_{\frac{2}{\bo{\sigma^2}}}$.

Therefore from \eqref{G eq} for $x \in [-d,0]$ we obtain:
\begin{equation}
\nonumber
\Delta_{\frac{\bo{\sigma}^2}{2}}\WW^{(\omega + \delta)\prime \prime}(x,-d) + \Delta_{\bo{\mu}}\WW^{(\omega +\delta)\prime}(x,-d)
-((\omega(x+d)+\delta)\bo{I}-\bo{Q})\WW^{(\omega + \delta)}(x,-d)=\bo{0},
\end{equation}
and for $x > 0$,
\begin{equation}
\nonumber
\Delta_{\frac{\bo{\sigma}^2}{2}}\WW^{(\omega + \delta)\prime \prime}(x,-d) + \Delta_{\bo{\mu}}\WW^{(\omega +\delta)\prime}(x,-d)
-(\delta\bo{I}-\bo{Q})\WW^{(\omega + \delta)}(x,-d)=\bo{0},
\end{equation}
with the boundary conditions $\WW^{(\omega + \delta)} (-d,-d) = \bo{0}$ and $\WW^{(\omega+\delta)\prime}(-d,-d) = \Delta_{\frac{2}{\bo{\sigma}^{2}}}$.

Before we proceed to the numerical example, we recall that $N$ is the cardinality of the state space $E$ and $\WW^{(\omega + \delta)}$ maps $\mathbb{R}$ into $\mathbb{R}^{N\times N}$. Thus, we can see that differential equations for $\WW^{(\omega+\delta)}$ can be treat as (2$\cdot$N)th-order system of second-order initial-value problems. For second-order initial-value problems we can introduce new unknown functions being derivative of remaining functions. Then we get
(4$\cdot$N)th-order system of first-order initial-value problems for which there exist rich collection of iterative algorithms. Let us focus on the uniqueness and existence in the general case. Namely, recall that every $m$th-order system of first-order initial-value problems can be written in the form of
\begin{equation}
\nonumber
\begin{array}{c}
\frac{dy_1}{dt} = g_1(t,y_1,y_2,...,y_m) \\[9pt]
\frac{dy_2}{dt} = g_2(t,y_1,y_2,...,y_m)\\[9pt]
\vdots\\[9pt]
\frac{dy_m}{dt} = g_m(t,y_1,y_2,...,y_m)
\end{array}
\end{equation}
where for all $i \in \lbrace 1,2,...,m\rbrace$, $g_i$ is assumed to be defined on some set
\[
D_i = \lbrace (t,y_1,...,y_m) : a \leq t \leq b , -\infty < y_k < \infty, \forall k = 1,2,...,m \rbrace.
\]
Then the system has a unique solution $y_1(t),y_2(t),...,y_m(t)$, for $a \leq t \leq b$ if all $g_i$'s are continuous on $D_i$ and satisfy a Lipschitz condition with respect to $(y_1,y_2,...,y_m)$.

In the framework of this section, we choose $a = -d$ and $b = t_{max}$ as a upper limit of our approximation. It is also clear that if we choose $\omega$ to be continuous then above sufficient condition holds true. Set the following parameters
\begin{equation}
\nonumber
\begin{split}
&\Delta_{\bo{\sigma}} = \left(\begin{array}{cc} 1.2 & 0 \\ 0 & 2 \end{array}\right), \quad\Delta_{\bo{\mu}} = \left(\begin{array}{cc} 1.75 & 0 \\ 0 & 1.25 \end{array}\right), \quad
\bo{Q} = \left(\begin{array}{cc} -0.4 & 0.4 \\ 0.2 & -0.2 \end{array}\right),\\[10pt]& \gamma_0 = 0.5,\quad \gamma_1 = -0.1, \quad d = 5, \quad t_{max} = 10 \quad \text{ and } \quad \delta = 0.04.
\end{split}
\end{equation}
Figures \ref{Fig31} and \ref{Fig41}  present entries of the numerical approximations of the matrices $\WW^{(\omega + \delta)}$ and $\bo{v}_c^d (x)$ respectively.
\begin{figure}[H]
\centering
\includegraphics[width=12cm]{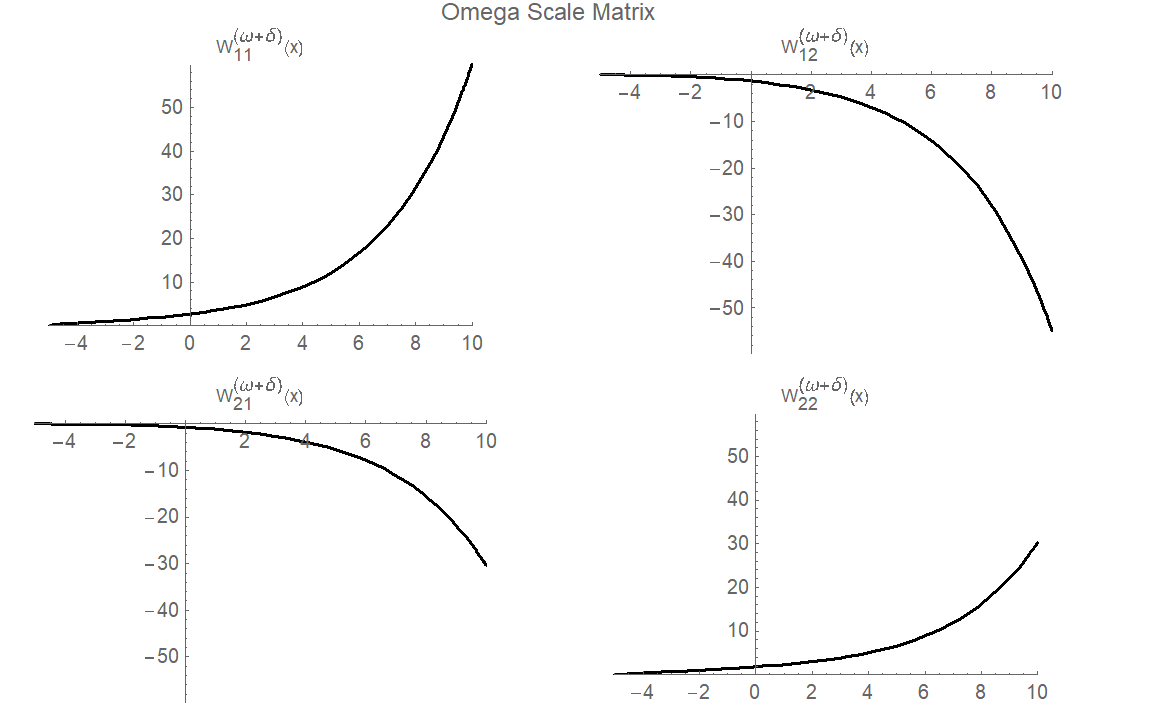}
\caption{Entries of $\omega$-scale matrix function $\WW^{(\omega+\delta)}$}\label{Fig31}
\end{figure}

\begin{figure}[H]
\centering
\includegraphics[width=12cm]{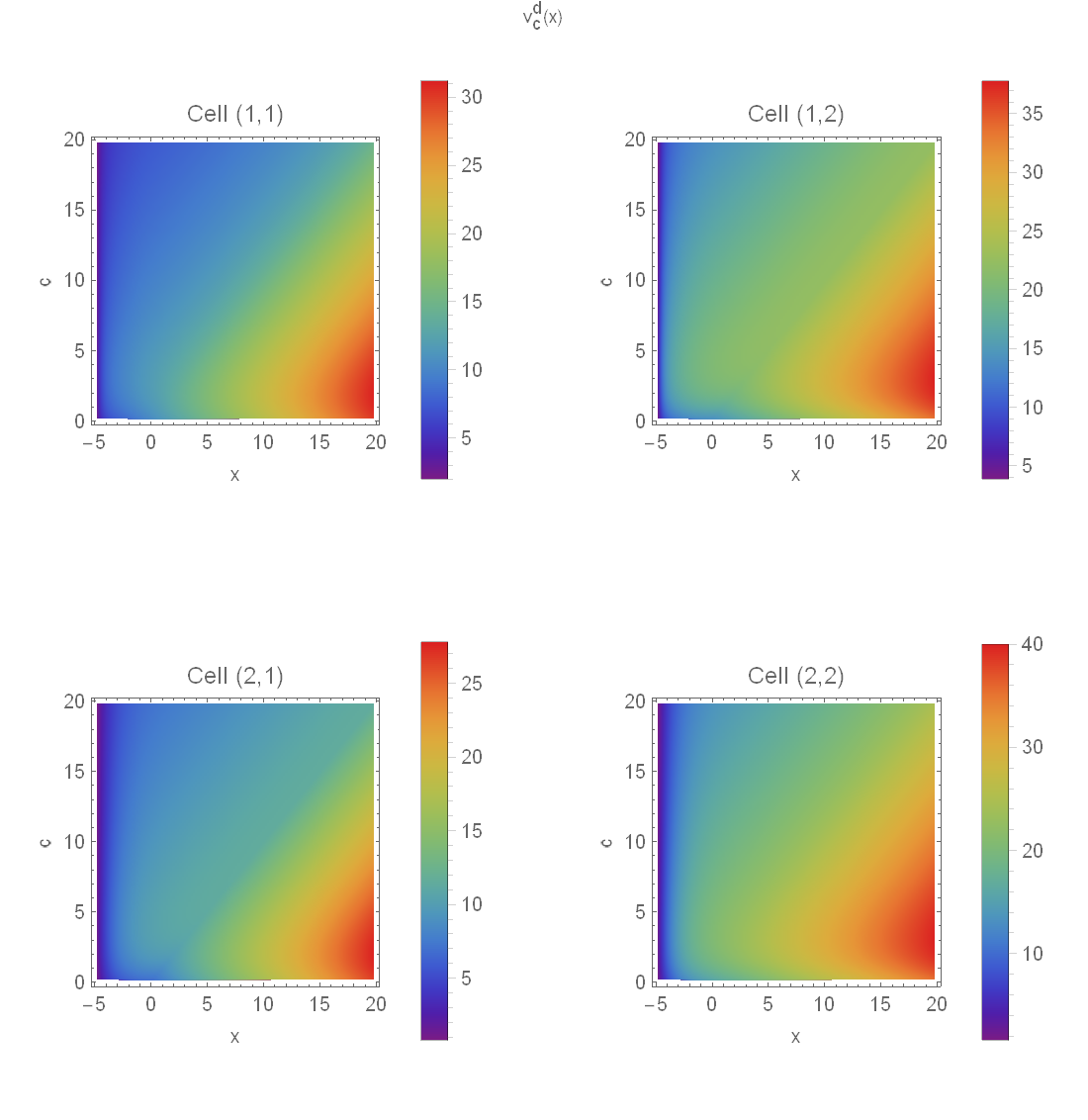}
\caption{Entries of the value matrix function $\bo{v}_c^d(x)$}\label{Fig41}
\end{figure}
\appendix
\section{Proof of Lemma \ref{lem unique}}\label{Lemma2.1}
To prove the uniqueness of the solution, we will show that $\mathbf{H}(x)=\mathbf{0}$ is the only solution to
\begin{equation}
\mathbf{H}(x)=\int_0^{x}\mathbf{W}(x-y)\boldsymbol{\omega}(y)\mathbf{H}(y)dy. \label{HHH}
\end{equation}
Taking the Laplace transform on both sides of (\ref{HHH}) (with an argument $s_0$), we get
\[
\widetilde{\mathbf{H}}(s_0) = \widetilde{\mathbf{W}}(s_0)\  \widetilde{\boldsymbol{\omega} \mathbf{H}}(s_0).
\]
Recall that $\lambda$ is the upper bound of $|\omega_i(y)|$ on $[0,\infty)$ for all $i \in E$. Using (\ref{matrix_W_def}), we
obtain that the matrix norm of $\widetilde{\mathbf{H}}(s_0) $ fulfills the inequality

\begin{equation}
\lVert \widetilde {\mathbf{H}}(s_0) \rVert \le \lambda \lVert \widetilde {\mathbf{W}}(s_0) \rVert  \lVert \widetilde {\mathbf{H}}(s_0) \rVert
=\lambda \lVert {\mathbf{F}}^{-1}(s_0) \rVert  \lVert \widetilde {\mathbf{H}}(s_0) \rVert. \label{inequal}
\end{equation}
Next we will show that there exists $s_0$ such that
\begin{equation}
 \lVert {\mathbf{F}}(s) ^{-1}\rVert< \frac{1}{2\lambda}, \qquad \text{ for all $s\ge s_0$}. \label{bound}
\end{equation}
To do so, we recall the expression for $\mathbf{F}(\alpha)$:
\[
\mathbf{F}(\alpha)=\text{diag}(\psi_1(\alpha),\dots,\psi_N(\alpha) )+\mathbf{Q}\circ \mathbb{E}(e^{\alpha U_{ij}}).
\]
Observe that its diagonal goes to infinity, as $\alpha$ goes to infinity, and each element (entry-wise) other than the diagonal is bounded by the (fixed) $q_{ij}$.

We now prove that (using the induction argument with respect to the dimension of $\mathbf{F}(\alpha)$)
\begin{equation*}
{\mathbf{F}}^{-1}(\alpha) \rightarrow \mathbf{0},\text{ as } \alpha \rightarrow \infty. \label{F-1}
\end{equation*}
Define a series sub-matrices of $\mathbf{F}(\alpha)$, for $m=1,2,\ldots, N$,
\[
\mathbf{F}_{m}(\alpha)^{-1}:=\mathbf{F}(\alpha)^{-1}_{m\times m}=\left(\left \{F_{ij}(\alpha)\right\}_{i,j=1}^m\right)^{-1},
\]
and in what follows, we will show that
\begin{equation}
{\mathbf{F}}_m^{-1}(\alpha) \rightarrow \mathbf{0}_{m\times m},\text{ as } \alpha \rightarrow \infty. \label{Finverse}
\end{equation}
Clearly, $\mathbf{F}_{N}(\alpha)^{-1}=\mathbf{F}(\alpha)^{-1}$.

When $m=1$, $\mathbf{F}_{1}(\alpha)^{-1}=\frac{1}{\psi_1(s_0)+q_{11}}$, which makes (\ref{Finverse}) hold obviously, and $s_0$ in (\ref{bound}) is chosen such that $\frac{1}{\psi_1(s_0)+q_{11}} < \frac{1}{2\lambda}$.
Assume (\ref{Finverse}) holds for the dimension $m=k-1$. Then in the dimension $m=k$, we have
\begin{align*}
\mathbf{F}_{k}(\alpha)^{-1}=& \begin{pmatrix}
  \mathbf{A} & \mathbf{B}\\
  \mathbf{C} & \mathbf{D}
 \end{pmatrix}^{-1},
\end{align*}
where
\[
\mathbf{A}_{(k-1)\times (k-1)}=\mathbf{F}_{k-1}(\alpha),
\]
\[
\mathbf{B}_{(k-1)\times 1}=(q_{1 k} \mathbb{E}(e^{\alpha U_{1k}}), \ldots, q_{(k-1) k} \mathbb{E}(e^{\alpha U_{(k-1)k}}))^T,
\]
\[
\mathbf{C}_{1\times(k-1)}= (q_{ k1} \mathbb{E}(e^{\alpha U_{k1}}), \ldots, q_{k(k-1) } \mathbb{E}(e^{\alpha U_{k(k-1)}})),
\]
and
\[
\mathbf{D}_{1\times1}=\psi_k(\alpha)-q_{kk}.
\]

Using the property for the inverse of the block matrix
\[
 \begin{pmatrix}
  \mathbf{A} & \mathbf{B}\\
  \mathbf{C} & \mathbf{D}\\
 \end{pmatrix}^{-1}
 = \begin{pmatrix}
  \mathbf{A}^{-1}+\mathbf{A}^{-1}\mathbf{B}(\mathbf{D}-\mathbf{C}\mathbf{A}^{-1}\mathbf{B})^{-1}\mathbf{C}\mathbf{A}^{-1} & -\mathbf{A}^{-1}\mathbf{B}(\mathbf{D}-\mathbf{C}\mathbf{A}^{-1}\mathbf{B})^{-1}\\
  -(\mathbf{D}-\mathbf{C}\mathbf{A}^{-1}\mathbf{B})^{-1}\mathbf{C}\mathbf{A}^{-1} & (\mathbf{D}-\mathbf{C}\mathbf{A}^{-1}\mathbf{B})^{-1}\\
 \end{pmatrix},
\]
it is easy to see that each block for $\mathbf{F}_{k}(\alpha)^{-1}$ goes to $\mathbf{0}$ as $\alpha\rightarrow \infty$, since
\[
\mathbf{A}^{-1}=\mathbf{F}_{k-1}(\alpha)^{-1}\rightarrow \mathbf{0}_{(k-1)\times(k-1)},
\]
\[
(\mathbf{D}-\mathbf{C}\mathbf{A}^{-1}\mathbf{B})^{-1}=\frac{1}{\psi_k(\alpha)-q_{kk}-\mathbf{C}\mathbf{A}^{-1}\mathbf{B}} \rightarrow 0,
\]
and $\mathbf{B}$, $\mathbf{C}$ have bounded (non-negative) elements.

This completes the proof of \eqref{bound}.

Plugging (\ref{bound}) into (\ref{inequal}) gives:
\[
\lVert \widetilde {\mathbf{H}}(s_0) \rVert =0, \text{ i.e., } \mathbf{H}(x)=\mathbf{0},
\]
which completes the proof of uniqueness of the solution of Equation \eqref{Hh}.

To prove the existence of solution of of Equation \eqref{Hh}, we construct a series of matrices $\{\mathbf{H}_m\}$ which converge to the unique solution. Define the operator $\mathcal{G}$ on a matrix: for $z>0$,
\[
\mathcal{G}\widetilde{\mathbf{K}}(z)=\int_0^{\infty} e^{-zx}\int_0^{x} e^{-s_0(x-y)}\mathbf{W}(x-y) \boldsymbol{\omega}(y)\mathbf{K}(y)dydx= \widetilde {\mathbf{W}}(s_0+z)\  \widetilde {\boldsymbol{\omega} \mathbf{K}}(z) ,
\]
\[
\mathcal{G}^{(m+1)} \widetilde{\mathbf{K}}(z) =\mathcal{G} (\mathcal{G}^{(m)})\widetilde{\mathbf{K}}(z),
\]
\[
\widetilde{\mathbf{H}}_0(z)=\int_{0}^{\infty} e^{-zx}e^{-s_0x}\mathbf{h}(x)dx=\widetilde{\mathbf{h}}_0(s_0+z), \qquad \widetilde{\mathbf{H}}_{m+1}(z)=\widetilde{\mathbf{H}}_0(z)+\mathcal{G}\widetilde{\mathbf{H}}_m(z).
\]

Then we have $\mathcal{G}$ is a linear operator such that $\lVert\mathcal{G}\widetilde{\mathbf{K}}(z) \rVert < \frac{1}{2}\lVert \widetilde{\mathbf{K}}(z) \rVert $ for $z>0$.
Therefore, for $m>l$, we have
\[
\lVert \widetilde{\mathbf{H}}_m(z) - \widetilde{\mathbf{H}}_l(z) \rVert = \lVert \sum_{k=l+1}^{m}\mathcal{G}^{(k)}\widetilde{\mathbf{H}}_0(z) \rVert < 2^{-l}  \lVert \widetilde{\mathbf{H}}_0(z) \rVert,
\]
which means $\{\widetilde{\mathbf{H}}_{m}(z),z>0\}_{m\ge0}$ forms a Cauchy sequence (entry-wise) that admits a limit $\widetilde{\mathfrak{H}}(z)$ for any $z>0$ satisfying:
\[
\widetilde{\mathfrak{H}}(z)=\widetilde{\mathbf{H}}_0(z)+\mathcal{G}\widetilde{\mathfrak{H}}(z)=\widetilde{\mathbf{h}}_0(s_0+z)+ \widetilde {\mathbf{W}}(s_0+z)\  \widetilde {\boldsymbol{\omega} \mathfrak{H}}(z).
\]

Using the uniqueness of Laplace transform, we have
\[
{\mathfrak{H}}(x)=e^{-s_0x}{\mathbf{h}}_0(x)+  \int_0^{x} e^{-s_0(x-y)}\mathbf{W}(x-y)\boldsymbol{ \omega}(y)\mathfrak{H}(y)dy,
\]
which shows that $\mathbf{H}(x)=e^{s_0x}{\mathfrak{H}}(x)$ is the solution to (\ref{Hh}).

As for the second statement in this lemma, we see that if $\mathbf{H}$ satisfies (\ref{Hhdelta}), by letting $\delta=0$, we obtain (\ref{Hh}) immediately. Now we only need to show that if $\mathbf{H}$  is the solution to (\ref{Hh}), it is also the solution to (\ref{Hhdelta}).
We convolute both sides of (\ref{Hh}) with $\delta \mathbf{W}^{(\delta)}$ (on the left),
\begin{align*}
\delta \mathbf{W}^{(\delta)}* \mathbf{H}(x)&=\delta \mathbf{W}^{(\delta)}*\mathbf{h}(x)+\delta \mathbf{W}^{(\delta)}* \mathbf{W} *(\boldsymbol{\omega}\mathbf{H})(x)\\
&=\delta \mathbf{W}^{(\delta)}*\mathbf{h}(x)+( \mathbf{W}^{(\delta)}- \mathbf{W})*(\boldsymbol{\omega}\mathbf{H})(x)
\end{align*}
where the last step using the identity $ \mathbf{W}^{(\delta)}- \mathbf{W}= \delta \mathbf{W}^{(\delta)}* \mathbf{W}$ (which can be easily seen from the Laplace transform). Therefore,
\[
\mathbf{H}(x)=\mathbf{h}(x)+\delta \mathbf{W}^{(\delta)}*\mathbf{h}(x)+ \mathbf{W}^{(\delta)}* ((\boldsymbol{\omega}-\delta\mathbf{I})\mathbf{H})(x),
\]
which completes the proof.
\halmos

\section{Proofs of main results}\label{Proof Main}

\subsection{Proof of Theorem \ref{Two-sided}}
\subsubsection{Proof of the case $(i)$}\label{AppendixB11}

In what follows, we prove the case of $d=0$, and then the general result holds true using the shifting argument as well as the identity (\ref{shiftw}).

First, applying the strong Markov property of $X$ at $\tau_y^+$ and using the fact that $X$ has no positive jumps, we get that:
\begin{equation}
\mathbf{A}^{(\omega)}(x,z)=\mathbf{A}^{(\omega)}(x,y)\mathbf{A}^{(\omega)}(y,z), \label{AA}
\end{equation}
for all $0 \le x\le y\le z.$

Following the similar argument as in Li and Palmowski \cite{LP16}, we recall that $\lambda>0$
is the arbitrary upper bound of $\omega_i(x)$ (for all $x \in \mathbb{R}$ and $1\le i \le N$). Let $\Psi=\{\Psi_{t},t\ge 0\}$ be a Poisson point process with a characteristic measure
$\mu(dt,dy)=\lambda dt \; \frac{1}{\lambda}1_{\{[0,\lambda]\}}(y)dy$. Hence $\Psi=\{(T_k,M_k),k=1,2,\dots\}$ is a doubly stochastic marked Poisson process with jump intensity
$\lambda$, jumps epochs $T_k$ and marks $M_k$ being uniformly distributed on $[0,\lambda]$.
Moreover, we construct $\Psi$ to be independent of $X$. Therefore, for $T^{\omega}:=\inf{\{T_k>0: M_k < \omega_{J_{T_k}}(X_{T_k}); \mbox{ for } k \geq 1 \}}$, we have
\begin{align*}
A_{ij}^{(\omega)}(x,c)
=& \mathbb{P}_{x,i}\left( \tau_c^+< \tau_0^- \wedge T^{\omega}, J_{\tau_c^+}=j  \right)\\
=& \mathbb{P} _{x,i}\left(\#_{k} \{ M_{k}<\omega_{J_{T_k}} (X_{T_{k}} ) \mbox{ for } T_{k}< \tau_c^+ ,\tau_c^+<\tau_0^- , J_{\tau_c^+}=j   \}=0 \right).
\end{align*}
In this case, there are two scenarios following:
either there is no $T_k$ which occurs before reaching level $c$ or the first jump time $T_1$ occurs in state $m$ and the process renews from state $m$.
Hence:
\begin{align*}
A_{ij}^{(\omega)}(x,c)=& \mathbb{P} _{x,i}(T_1>\tau_c^+, \tau_c^+<\tau_0^-, J_{\tau_c^+}=j)\\
&+\sum_{m=1}^{N}\mathbb{E}_{x,i} \left[ A_{mj}^{(\omega)}(X_{T_1},c), T_1<\tau_c^+\wedge \tau_0^-, M_1>\omega_m(X_{T_1}), J_{T_1}=m \right]\\
=&\mathbb{E} _{x,i}[e^{-\lambda \tau_c^+}; \tau_c^+<\tau_0^-, J_{\tau_c^+}=j]\\
&+\int_0^{\infty} \sum_{m=1}^{N} \mathbb{E}_{x,i} \left[ X_{T_1}\in dy, T_1<\tau_c^+\wedge \tau_0^-, J_{T_1}=m \right]   \frac{\lambda-\omega_m(y)}{\lambda}
A_{mj}^{(\omega)}(y,c),
\end{align*}
which is equivalent to
\begin{align*}
\mathbf{A}^{(\omega)}(x,c)&=\mathbb{E} _{x}[e^{-\lambda \tau_c^+}, \tau_c^+<\tau_0^-, J_{\tau_c^+}]\\&+\int_0^{c}
\mathbb{E}_{x} \left[ X_{T_1}\in dy, T_1<\tau_c^+\wedge \tau_0^-, J_{T_1} \right]\frac{1}{\lambda}  \left( \lambda\mathbf{I}-\boldsymbol{\omega}(y) \right)
\mathbf{A}^{(\omega)}(y,c),
\end{align*}
where
\[
\mathbb{E} _{x}[e^{-\lambda \tau_c^+}, \tau_c^+<\tau_0^-, J_{\tau_c^+}]=\mathbf{W}^{(\lambda)}(x)\mathbf{W}^{(\lambda)}(c)^{-1}
\]
and
\[
\frac{1}{\lambda}\mathbb{E}_{x} \left[ X_{T_1}\in dy, T_1<\tau_c^+\wedge \tau_0^-, J_{T_1} \right] =
\left(\mathbf{W}^{(\lambda)}(x)\mathbf{W}^{(\lambda)}(c)^{-1}\mathbf{W}^{(\lambda)}(c-y)-\mathbf{W}^{(\lambda)}(x-y)\right) dy,
\]
are given in Ivanovs and Palmowski \cite{IP12} and Ivanovs \cite{I14}, respectively.

Taking the last increment to the other side of the above equality and applying relation (\ref{AA}) gives
\begin{align}\label{relation}
&\left(\mathbf{I}+\int_0^{x} \mathbf{W}^{(\lambda)}(x-y)  \left( \lambda\mathbf{I}-\boldsymbol{\omega}(y) \right)\mathbf{A}^{(\omega)}(y,x)dy\right)\mathbf{A}^{(\omega)}(x,c)\\ \nonumber
=&\mathbf{W}^{(\lambda)}(x)\mathbf{W}^{(\lambda)}(c)^{-1}\left( \mathbf{I}+\int_0^{c}\mathbf{W}^{(\lambda)}(c-y)
\left( \lambda\mathbf{I}-\boldsymbol{\omega}(y) \right)\mathbf{A}^{(\omega)}(y,c)dy\right).
\end{align}

Define
\begin{equation}
\mathcal{W}^{(\omega)}(x)^{-1}:=\mathbf{W}^{(\lambda)}(x)^{-1}\left(\mathbf{I}+\int_0^{x}  \mathbf{W}^{(\lambda)}(x-y) \left( \lambda\mathbf{I}-\boldsymbol{\omega}(y) \right)
\mathbf{A}^{(\omega)}(y,x)dy\right) \label{AW}
\end{equation}


and then we obtain the required identity
\[
\mathbf{A}^{(\omega)}(x,c)=\mathcal{W}^{(\omega)}(x)\mathcal{W}^{(\omega)}(c)^{-1}.
\]

The proof of the invertibility of matrix $\mathcal{W}^{(\omega)}(x)^{-1}$ is deferred to Proposition \ref{invertibility}.

After replacing $\mathbf{A}^{(\omega)}(y,x)=\mathcal{W}^{(\omega)}(y)\mathcal{W}^{(\omega)}(x)^{-1}$ in (\ref{AW}), we have
\begin{align*}
\mathbf{W}^{(\lambda)}(x)=&\left(\mathbf{I}+\int_0^{x}\mathbf{W}^{(\lambda)}(x-y) \left( \lambda\mathbf{I}-\boldsymbol{\omega}(y) \right)\mathbf{A}^{(\omega)}(y,x)dy\right)
\mathcal{W}^{(\omega)}(x)\\
=&\mathcal{W}^{(\omega)}(x)+\int_0^{x} \mathbf{W}^{(\lambda)}(x-y) \left( \lambda\mathbf{I}-\boldsymbol{\omega}(y) \right)\mathcal{W}^{(\omega)}(y)dy
\end{align*}

Now using the identity
\[
\mathbf{W}^{(\delta)}-\mathbf{W}=\delta\mathbf{W}* \mathbf{W} ^{(\delta)},
\]
it is easy to show
\begin{align*}
\mathcal{W}^{(\omega)}(x)=\mathbf{W}(x)+\int_0^{x}\mathbf{W}(x-y)  \boldsymbol{\omega}(y)\mathcal{W}^{(\omega)}(y)\;dy.
\end{align*}
\halmos

\begin{proposition} \label{invertibility}
The matrix $\mathcal{W}^{(\omega)}(x)^{-1}$ is invertible for any $x > 0$.
\end{proposition}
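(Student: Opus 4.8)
The plan is to reduce the claim to the invertibility of the matrix sitting in parentheses in $(\ref{AW})$. Write $\boldsymbol{M}(x):=\mathbf{I}+\int_0^{x}\mathbf{W}^{(\lambda)}(x-y)(\lambda\mathbf{I}-\boldsymbol{\omega}(y))\mathbf{A}^{(\omega)}(y,x)\,dy$, so that $\mathcal{W}^{(\omega)}(x)^{-1}=\mathbf{W}^{(\lambda)}(x)^{-1}\boldsymbol{M}(x)$ by definition. Since $\mathbf{W}^{(\lambda)}(x)$ is invertible for every $x>0$ by \cite{KP08}, it is enough to prove that $\boldsymbol{M}(x)$ is invertible for every $x>0$. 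First I would check invertibility near $0$: on a fixed bounded interval the integrand defining $\boldsymbol{M}(x)-\mathbf{I}$ is uniformly bounded — $\mathbf{W}^{(\lambda)}$ is continuous, $\lambda\mathbf{I}-\boldsymbol{\omega}(\cdot)$ is a diagonal matrix with entries in $[0,\lambda]$, and $\mathbf{A}^{(\omega)}(y,x)$ has nonnegative entries with row sums at most one — so $\lVert\boldsymbol{M}(x)-\mathbf{I}\rVert=O(x)$ as $x\downarrow 0$ and hence $\det\boldsymbol{M}(x)\neq 0$ for $x$ in some interval $(0,\varepsilon)$.

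The crucial auxiliary fact is that the exit matrix tends to the identity when the starting point approaches the upper barrier. Because $0\le\omega_{J_s}(X_s)\le\lambda$, one has $e^{-\lambda\tau_c^+}\le e^{-\int_0^{\tau_c^+}\omega_{J_s}(X_s)\,ds}\le 1$ on $\{\tau_c^+<\tau_0^-\}$, which yields the entrywise sandwich
\[
\mathbf{W}^{(\lambda)}(x)\mathbf{W}^{(\lambda)}(c)^{-1}\ \le\ \mathbf{A}^{(\omega)}(x,c)\ \le\ \mathbf{W}(x)\mathbf{W}(c)^{-1},\qquad 0<x\le c,
\]
by $(\ref{two-sided_W})$ with $q=\lambda$ and $q=0$. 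For fixed $c>0$ the two bounding matrices tend to $\mathbf{I}$ as $x\uparrow c$, since $\mathbf{W}^{(\lambda)}$ and $\mathbf{W}$ are continuous and invertible at $c$; hence $\mathbf{A}^{(\omega)}(x,c)\to\mathbf{I}$ as $x\uparrow c$.

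Then I would propagate invertibility by a connectedness argument. Suppose $\boldsymbol{M}$ is singular somewhere on $(0,\infty)$ and set $x^{\ast}:=\inf\{x>0:\det\boldsymbol{M}(x)=0\}$; by the first paragraph $x^{\ast}\ge\varepsilon>0$, and by continuity of $x\mapsto\boldsymbol{M}(x)$ we have $\det\boldsymbol{M}(x^{\ast})=0$ while $\boldsymbol{M}(x)$ is invertible on $(0,x^{\ast})$. Evaluating $(\ref{relation})$ at $c=x^{\ast}$ gives, for $x\in(0,x^{\ast})$,
\[
\boldsymbol{M}(x)\,\mathbf{A}^{(\omega)}(x,x^{\ast})=\mathbf{W}^{(\lambda)}(x)\,\mathbf{W}^{(\lambda)}(x^{\ast})^{-1}\,\boldsymbol{M}(x^{\ast}).
\]
The right-hand side has rank $\operatorname{rank}\boldsymbol{M}(x^{\ast})<N$, and $\boldsymbol{M}(x)$ on the left is invertible, so $\operatorname{rank}\mathbf{A}^{(\omega)}(x,x^{\ast})<N$, i.e.\ $\det\mathbf{A}^{(\omega)}(x,x^{\ast})=0$, for every $x\in(0,x^{\ast})$. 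Letting $x\uparrow x^{\ast}$ and using the previous paragraph gives $\det\mathbf{I}=0$, a contradiction. Consequently $\boldsymbol{M}(x)$, hence $\mathcal{W}^{(\omega)}(x)^{-1}=\mathbf{W}^{(\lambda)}(x)^{-1}\boldsymbol{M}(x)$, is invertible for every $x>0$.

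The main obstacle is the second paragraph — that $\mathbf{A}^{(\omega)}(x,c)\to\mathbf{I}$ as $x\uparrow c$ — which ultimately relies on the invertibility of the classical scale matrices $\mathbf{W}^{(q)}$ at positive arguments, a structural input taken from \cite{KP08,IP12}. The remaining points, namely the uniform bound behind the $O(x)$ estimate and the joint continuity of $(y,x)\mapsto\mathbf{A}^{(\omega)}(y,x)$ (needed to locate $x^{\ast}$ and to make $\boldsymbol{M}$ continuous), are routine but should be recorded carefully.
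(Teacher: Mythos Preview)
Your proof is correct and shares the same skeleton as the paper's: both reduce to the invertibility of the parenthesised matrix in $(\ref{AW})$ (the paper calls it $\mathbf{P}(x)$, you call it $\boldsymbol{M}(x)$), both exploit relation $(\ref{relation})$ to link the value at a putative first singular point $x^{\ast}$ with values on $(0,x^{\ast})$, and both finish with continuity at $0$. The one genuine difference is the auxiliary fact about $\mathbf{A}^{(\omega)}$ used to extract a contradiction. The paper invokes the exponential representation $\mathbf{A}^{(\omega)}(x,c)=\exp\bigl(\int_x^c \boldsymbol{\Lambda}^{\omega,\ast}(y)\,dy\bigr)$ from \cite{KP08} (Equation $(\ref{Sub-stoch})$) to get that $\mathbf{A}^{(\omega)}(x,c)$ is \emph{always} invertible, so the left side of $(\ref{relation})$ is invertible whenever $\boldsymbol{M}(x)$ is. You instead squeeze $\mathbf{A}^{(\omega)}(x,c)$ between $\mathbf{W}^{(\lambda)}(x)\mathbf{W}^{(\lambda)}(c)^{-1}$ and $\mathbf{W}(x)\mathbf{W}(c)^{-1}$ to obtain $\mathbf{A}^{(\omega)}(x,x^{\ast})\to\mathbf{I}$ as $x\uparrow x^{\ast}$, and use that to contradict the forced rank deficiency of $\mathbf{A}^{(\omega)}(x,x^{\ast})$. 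Your route is more elementary---it relies only on continuity and invertibility of the classical scale matrices rather than on the time-inhomogeneous intensity representation---at the cost of establishing a weaker (but sufficient) statement about $\mathbf{A}^{(\omega)}$. The paper's route yields the stronger side fact that $\mathbf{A}^{(\omega)}(x,c)$ is invertible everywhere, which is independently useful later (e.g.\ in the dividend argument). Both arguments are complete once continuity of $x\mapsto\boldsymbol{M}(x)$ is recorded, which, as you note, follows from the local boundedness of the integrand.
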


\begin{proof}
From (\ref{AW}), one can see that it is enough to prove that the matrix
$$\mathbf{P}(x):= \mathbf{I}+\int_0^{x}  \mathbf{W}^{(\lambda)}(x-y) \left( \lambda\mathbf{I}-\boldsymbol{\omega}(y)\right)\mathbf{A}^{(\omega)}(y,x)dy$$  is invertible
for every $x\geq 0$. 
Using similar argument as in \cite{KP08}, note that for all $y > 0$ there exists some $N \times N$ sub-stochastic
invertible intensity matrix $\bo{\Lambda}^{\omega, *}(y)$ such that
\begin{equation}\label{Sub-stoch}
\mathbb{P}_x\left( \tau_c^+< \tau_0^- \wedge T^{\omega}, J_{\tau_c^+}| J_0  \right)= \exp\left(\int_x^c \bo{\Lambda}^{\omega, *}(y)\; dy\right).
\end{equation}
This observation implies that the matrix $\mathbf{A}^{(\omega)}(x,c)$ is invertible for any $x,c \in \mathbb{R}_{+}$ such that $0<x\leq c$.
The matrix $\mathbf{A}^{(\omega)}(x,c)$ is also continuous (entry wise) with respect of $c$.
Now, assume that there exists $c>0$ such that matrix $\mathbf{P}(x)$ is invertible for some  $0<x<c$
and is singular for $x=c$. Then from relation (\ref{relation}) we get contradiction, because the left-hand side of it is
invertible (as a product of invertible matrices) and the right-hand side is singular from the assumption.
Hence, only two scenarios are possible: the matrix $\mathbf{P}(x)$ is invertible for all $x > 0$ or it is singular for all $x> 0$.
Finally, since $\mathbf{P}(0)=\mathbf{I}$ and $\mathbf{P}(x)$ is continuous in $x \geq 0$ we obtain that $\mathbf{P}(x)$ must be invertible for
all $x\geq 0$.
\end{proof}\\

\subsubsection{Proof of the case $(ii)$}

Let $\{(X_t,J_t)\}_{t\ge 0 }$ be a MAP with the lifetime $\xi$, transition probabilities and $q$-resolvent measures, given,
respectively by
\[
Q_{t,ij} f_j(x)=\mathbb{E}_{x,i}\left[ f_j(X_t), t < \xi, J_t=j \right]
\]
and
\[
K^{(q)}_{ij} f_j(x)= \int_0^{\infty} e^{-qt} Q_{t,ij}f_j(x) dt,
\]
where $\{f_j\}_{j=1}^N$ is a set of nonnegative, bounded, continuous functions on $\mathbb{R}$ such that \\
$\sup_{i,j} K^{(0)}_{ij}f_j(x)<\infty$. 
 Then the $\omega$-type resolvent ${K}^{(\omega)}_{ij}$ is defined by
\[
K^{(\omega)}_{ij} f_j(x):=\int_0^{\infty} Q_{t,ij}^{(\omega)} f_j(x)dt,
\]
where
\[
Q_{t,ij}^{(\omega)} f_j(x):=\mathbb{E}_{x,i} \left[ \exp\left ( -\int_0^{t} \omega_{J_s} (X_s) ds\right) f_{j}(X_t);t<\xi, J_{t}=j \right].
\]

The next lemma is a helpful tool used further to get the representation of the matrix $\bo{B}^{(\omega)}(x,c)$.

\begin{lemma} \label{lemk}
The matrix $\mathbf{K}^{(\omega)}\bo{f}(x)=\{K^{(\omega)}_{ij} f_j(x)\}_{i,j=1}^N$ satisfies the following equality:
\[
\mathbf{K}^{(\omega)} \bo{f}(x)= \mathbf{K}^{(0)}\left ( \boldsymbol{f} - \boldsymbol{\omega}\mathbf{K}^{(\omega)}\bo{f}\right)(x),
\]
where $\boldsymbol{f}=\textrm{diag}(f_1,...,f_N)$.
\end{lemma}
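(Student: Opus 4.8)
The plan is to obtain the identity from the elementary pathwise relation
\[
e^{-\int_0^t \omega_{J_s}(X_s)\,ds}=1-\int_0^t \omega_{J_u}(X_u)\,e^{-\int_u^t \omega_{J_s}(X_s)\,ds}\,du ,
\]
which holds for every $t\ge 0$ and (a.e.) every sample path: it follows by noting that $u\mapsto e^{-\int_u^t\omega_{J_s}(X_s)ds}$ is absolutely continuous on $[0,t]$ with derivative $\omega_{J_u}(X_u)e^{-\int_u^t\omega_{J_s}(X_s)ds}$, so integrating over $u\in[0,t]$ and using that the left endpoint value is $e^{-\int_0^t\omega_{J_s}(X_s)ds}$ and the right endpoint value is $1$ gives the displayed identity. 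Substituting it into $Q_{t,ij}^{(\omega)}f_j(x)$ and integrating in $t$ splits $K^{(\omega)}_{ij}f_j(x)$ into two pieces: the first is exactly $K^{(0)}_{ij}f_j(x)$, and the second is
\[
\int_0^\infty\mathbb{E}_{x,i}\!\left[\int_0^t \omega_{J_u}(X_u)\,e^{-\int_u^t\omega_{J_s}(X_s)ds}\,f_j(X_t)\,du\,;\,t<\xi,\,J_t=j\right]dt .
\]

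Next I would apply Tonelli's theorem to interchange the $du$ and $dt$ integrals (legitimate, the integrand being nonnegative), using $\{t<\xi\}\subseteq\{u<\xi\}$ for $u\le t$, rewriting the second piece as
\[
\int_0^\infty\mathbb{E}_{x,i}\!\left[\omega_{J_u}(X_u)\Big(\int_u^\infty e^{-\int_u^t\omega_{J_s}(X_s)ds}\,f_j(X_t)\,\mathbf{1}_{\{t<\xi,\,J_t=j\}}\,dt\Big);\,u<\xi\right]du .
\]
Conditioning on $\mathcal F_u$ and using the Markov property of the MAP $(X,J)$ at the deterministic time $u$ — on $\{u<\xi,\ J_u=m\}$ the shifted process $(X_{u+\cdot}-X_u,J_{u+\cdot})$ is distributed as $(X,J)$ started from state $m$, and after substituting $t=u+r$ the inner time-integral takes exactly the form defining $K^{(\omega)}$ — the inner integral equals $K^{(\omega)}_{mj}f_j(X_u)$. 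Hence the second piece becomes $\sum_m\int_0^\infty\mathbb{E}_{x,i}\big[\omega_m(X_u)\,K^{(\omega)}_{mj}f_j(X_u);u<\xi,J_u=m\big]du=\sum_m K^{(0)}_{im}\big(\omega_m\,K^{(\omega)}_{mj}f_j\big)(x)$, i.e.\ the $(i,j)$ entry of $\mathbf K^{(0)}(\boldsymbol\omega\mathbf K^{(\omega)}\boldsymbol f)$. Combining the two pieces and recalling $\boldsymbol f=\mathrm{diag}(f_1,\dots,f_N)$ yields $\mathbf K^{(\omega)}\boldsymbol f(x)=\mathbf K^{(0)}\boldsymbol f(x)-\mathbf K^{(0)}(\boldsymbol\omega\mathbf K^{(\omega)}\boldsymbol f)(x)=\mathbf K^{(0)}\big(\boldsymbol f-\boldsymbol\omega\mathbf K^{(\omega)}\boldsymbol f\big)(x)$, which is the claim.

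The only genuine obstacle is the bookkeeping that makes the Tonelli interchange and the Markov step rigorous and keeps every quantity finite; this is handled by the standing hypotheses $\sup_{i,j}K^{(0)}_{ij}f_j(x)<\infty$ and $0\le\omega_i\le\lambda$, together with the telescoping bound $\int_0^t\omega_{J_u}(X_u)e^{-\int_u^t\omega_{J_s}(X_s)ds}\,du=1-e^{-\int_0^t\omega_{J_s}(X_s)ds}\le 1$, which shows the second piece is dominated by $K^{(0)}_{ij}f_j(x)<\infty$, and likewise $K^{(\omega)}_{mj}f_j\le K^{(0)}_{mj}f_j$. Some care with the lifetime $\xi$ is needed when applying the Markov property, but since $u$ is deterministic and $\{t<\xi\}\subseteq\{u<\xi\}$ this is routine. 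As an alternative to the pathwise identity one could mimic the proof of Theorem~\ref{Two-sided}(i) and use the Poissonian-marks construction, interpreting $e^{-\int_0^t\omega_{J_s}(X_s)ds}$ as $\mathbb P(T^{\omega}>t\mid X,J)$ and decomposing according to whether the first $\omega$-mark falls before or after time $t$; this leads to the same recursion.
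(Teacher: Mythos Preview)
Your proof is correct and takes a genuinely different route from the paper. The paper does not use the pathwise integration-by-parts identity $e^{-\int_0^t\omega}=1-\int_0^t\omega\,e^{-\int_u^t\omega}\,du$; instead it carries out exactly the Poisson-marks decomposition you mention as an alternative at the end. Concretely, the paper conditions on whether the first mark $T_1$ of the auxiliary rate-$\lambda$ Poisson process falls before or after time $t$, obtaining first the relation
\[
\mathbf{K}^{(\omega)}\boldsymbol f(x)=\mathbf{K}^{(\lambda)}\boldsymbol f(x)+\mathbf{K}^{(\lambda)}\bigl((\lambda\mathbf I-\boldsymbol\omega)\mathbf{K}^{(\omega)}\boldsymbol f\bigr)(x),
\]
and only then passes to $\mathbf K^{(0)}$ via the classical resolvent identity $\lambda\,\mathbf K^{(0)}\mathbf K^{(\lambda)}=\mathbf K^{(0)}-\mathbf K^{(\lambda)}$. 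Your argument is more direct: it lands on $\mathbf K^{(0)}$ in one step, needs no auxiliary Poisson process and no resolvent identity, and the finiteness/Tonelli bookkeeping you outline is exactly what is needed. The paper's detour through $\mathbf K^{(\lambda)}$ buys consistency with the Poisson-marks machinery already set up in the proof of Theorem~\ref{Two-sided}(i), but your approach is the more elementary of the two. One small wording point: when you invoke the Markov property you really use that $(X,J)$ is a Markov process (so $(X_{u+\cdot},J_{u+\cdot})$ given $\mathcal F_u$ has the law of the process started at $(X_u,J_u)$), not merely the additive increment property you stated; this is of course implied, and your conclusion $K^{(\omega)}_{mj}f_j(X_u)$ is correct.
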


\begin{proof}
As before without loss of generality, we assume that $\omega_i(x)$ is bounded by some $\lambda>0$ for all $x \in \mathbb{R}$ and $i \in E$.
The finiteness of $K^{(\omega)}_{ij} f_j(x)$ comes from the fact that $K^{(\omega)}_{ij} f_j(x)<K^{(0)}_{ij} f_j(x)$ for all $1\le i \le N$.
Using similar arguments as in the proof in \ref{AppendixB11}, we have
\begin{align*}
Q_{t,ij}^{(\omega)} f_j(x)=& \mathbb{E}_{x,i}\left [ f_{J_t}(X_t); t< \xi \mbox{ and } M_k> \omega_{J_{T_k}}(X_{T_k}) \mbox{ for all }T_k<t, J_{t}=j\right]\\
=& \mathbb{E}_{x,i}\left [ f_{J_t}(X_t); t< \xi, T_1>t, J_{t}=j\right]\\
&+\sum_{l=1}^{N} \int_0^{t} \mathbb{E}_{x,i}\left [ Q_{t-s,lj}^{(\omega)} f_j(X_s), M_1> \omega_{l}(X_{s}), J_s=l\right] \mathbb{P}(T_1 \in ds)  \\
=& \mathbb{E}_{x,i}\left [ e^{-\lambda t}f_{j}(X_t); t< \xi, J_t=j\right]\\
&+\sum_{l=1}^{N} \int_0^{t} \mathbb{E}_{x,i}\left [ (\lambda -\omega_l (X_s))Q_{t-s,lj}^{(\omega)} f_j(X_s), J_s=l\right] e^{-\lambda s}ds \\
=& Q_{t,ij}^{(\lambda)} f_{j}(x)+ \sum_{l=1}^{N}\int_0^{t} Q_{s,il}^{(\lambda)} \left ((\lambda-\omega_l)Q_{t-s,lj}^{(\omega)} f_j\right)(x)ds.
\end{align*}
Note that the superscript $\lambda$ denotes a counterpart for fixed $\omega_i(x)\equiv\lambda$.
Equivalently, in a matrix form, we have
\[
\mathbf{Q}_{t}^{(\omega)} \bo{f}(x)= \mathbf{Q}_t^{(\lambda)}\boldsymbol{f}(x)+ \int_0^{t} \mathbf{Q}_s^{(\lambda)}\left((\lambda\mathbf{I}-\boldsymbol{\omega})\mathbf{Q}_{t-s}^{(\omega)}\bo{f}\right)(x) ds,
\]
where by matrix compounding, we mean $\left (\mathbf{A}(\mathbf{B})(x)\right)_{ij}=\sum_{m=1}^N A_{im} B_{mj}(x)$.

Thus,
\begin{equation}
\mathbf{K}^{(\omega)}\bo{f}(x)=\int_0^{\infty} \mathbf{Q}_{t}^{(\omega)} \bo{f}(x)dt=\mathbf{K}^{(\lambda)}\boldsymbol{f}(x)+ \mathbf{K}^{(\lambda)}\left((\lambda\mathbf{I}-\boldsymbol{\omega})
\mathbf{K}^{(\omega)}\bo{f}\right)(x). \label{Komega}
\end{equation}
Using the resolvent identity $\lambda \mathbf{K}^{(0)}(\mathbf{K}^{(\lambda)})= \mathbf{K}^{(0)}- \mathbf{K}^{(\lambda)}$, we have
\begin{align}
\lambda \mathbf{K}^{(0)} \left(\mathbf{K}^{(\omega)}\bo{f} \right) (x)=& \lambda \mathbf{K}^{(0)}( \mathbf{K}^{(\lambda)}\boldsymbol{f})(x)+
\lambda \mathbf{K}^{(0)}\left( \mathbf{K}^{(\lambda)}
\left((\lambda\mathbf{I}-\boldsymbol{\omega})\mathbf{K}^{(\omega)}\bo{f}\right)\right)(x) \nonumber\\
=&(\mathbf{K}^{(0)}- \mathbf{K}^{(\lambda)})\boldsymbol{f}(x)+ (\mathbf{K}^{(0)}- \mathbf{K}^{(\lambda)}) \left((\lambda\mathbf{I}-\boldsymbol{\omega})\mathbf{K}^{(\omega)}\bo{f}\right)(x).\label{Kome}
\end{align}

Comparing (\ref{Komega}) with (\ref{Kome}) completes the proof.
\end{proof}

Now we can proceed the proof of the case $(ii)$. Again we prove the case of $d=0$, and then the general result holds true using the shifting argument as well as the identity (\ref{shiftw}).

For $i,j \in E$, define
\begin{equation}
B_{ij}^{(\omega)}(x):=\lim_{c\rightarrow\infty}B_{ij}^{(\omega)}(x,c)=
\mathbb{E}_{x,i} \left[e^{-\int_0^{\tau_0^-}\omega_{J_s}(X_s)ds},\tau_0^- < \infty, J_{\tau_0^-}=j  \right]. \label{Bx}
\end{equation}
Note that for any $i,j \in E$ and $x,c \in \mathbb{R}$ such that $x<c$ matrix function $B_{ij}^{(\omega)}(x,c)$ is monotone in $c$, and it is bounded by
$0 \leq B_{ij}^{(\omega)}(x,c)\leq \mathbb{P}_{x,i} \left(\tau_0^- < \tau_c^+, J_{\tau_0^-}=j\right)\leq 1$,
so the limit in (\ref{Bx}) exists and is finite.
The strong Markov property and spectrally negativity of $X$ give that
\begin{equation}
\mathbf{B}^{(\omega)}(x,c)=\mathbf{B}^{(\omega)}(x)-\mathbf{A}^{(\omega)}(x,c)\mathbf{B}^{(\omega)}(c).\label{Bxc}
\end{equation}

To identify $\mathbf{B}^{(\omega)}(x)$, we use lemma \ref{lemk} with $\xi=\tau_0^-$ and $\boldsymbol{f}(\cdot)=\boldsymbol{\omega}(\cdot)$. Hence

\begin{align}\label{Invert_B}\nonumber
\mathbf{I}(x)-\mathbf{B}^{(\omega)}(x)=& \mathbb{E}_{x} \left[ \int_0^{\tau_0^-} \omega_{J_t}(X_t) \exp\left( -\int_0^t \omega_{J_s}(X_s) ds\right)dt, t<\tau_0^-, J_{t}\right]\\\nonumber
=&\int_0^{\infty}\mathbb{E}_{x} \left[  \omega_{J_t}(X_t) \exp\left( -\int_0^t \omega_{J_s}(X_s) ds\right), t<\tau_0^- ,J_{t}\right]dt\\
=&\int_0^{\infty} \left( \mathbf{W}(x)e^{\mathbf{R}y}-\mathbf{W}(x-y) \right) \left [ \boldsymbol{\omega}(y)-\boldsymbol{\omega}(\mathbf{I}-\mathbf{B}^{(\omega)})(y)\right]dy,
\end{align}

where the potential measure
$$\mathbf{K}^{(0)} \left(\ind_{(0,\infty)}(X_t \in dy)\right)(x)=\boldsymbol{U}_{(0,\infty)}(x,dy)=\left( \mathbf{W}(x)e^{\mathbf{R} y}-
\mathbf{W}(x-y) \right)dy,$$ was obtained in \cite{I14} with $\mathbf{R}=\mathbf{R}^0$. We may rewrite it as
\begin{equation}\label{nowylabel}
\mathbf{B}^{(\omega)}(x)=\mathbf{I}(x)- \mathbf{W}(x) \mathbf{C}_{B^{(\omega)}} +\int_0^{x} \mathbf{W}(x-y)\boldsymbol{ \omega}(y)\mathbf{B}^{(\omega)}(y)dy,
\end{equation}
where \begin{equation}\label{CBomega} \mathbf{C}_{B^{(\omega)}}=\int_0^{\infty} e^{\mathbf{R}y}\boldsymbol{\omega}(y)\mathbf{B}^{(\omega)}(y)dy.\end{equation}
Note that $0\leq B^{(\omega)}_{ij}(y)\leq 1$ and recall that $0\leq \omega_i(x)\leq\lambda$.
Hence last increment on the right hand side of equation \eqref{nowylabel} is finite and then matrix $\mathbf{C}_{B^{(\omega)}}$ is well defined and finite.

From the definitions of $\omega$-scale matrices
we have
\begin{equation}\label{B_omega}
\mathbf{B}^{(\omega)}(x)=\mathcal{Z}^{(\omega)}(x)-\mathcal{W}^{(\omega)}(x)\mathbf{C}_{B^{(\omega)}}.
\end{equation}
Equation (\ref{Bxc}) completes the proof.
\halmos

\subsection{Proof of Corollary \ref{One-sided}}
\subsubsection{Proof of the case $(i)$}
First we will prove that
\begin{equation}\label{one-sided_(i)}
\lim_{d \to -\infty} \mathcal{W}^{(\omega)}(x,d) \mathcal{W}^{(\omega)}(c,d)^{-1}= \mathcal{H}^{(\omega)}(x)\mathcal{H}^{(\omega)}(c)^{-1}.
\end{equation}

Then the result will follow from Theorem \ref{Two-sided}(i). Recall that for $x\geq d$ and any fixed $\beta \geq 0$ we have:
$$\mathcal{W}^{(\omega)}(x,d)=\mathbf{W}^{(\beta)}(x-d)+\int_{0}^{x} \mathbf{W}^{(\beta)}(x-z) (\boldsymbol{\omega}(z)- \beta \mathbf{I})\mathcal{W}^{(\omega)}(z,d)dz. $$
Moreover, for $x=0$, 
$$ \mathcal{W}^{(\omega)}(0,d)e^{-\mathbf{R}^{\beta}d}=  \mathbf{W}^{(\beta)}(-d) e^{-\mathbf{R}^{\beta}d}.$$
Hence from \eqref{lim_W} we have
$$\lim_{d \to -\infty}\mathcal{W}^{(\omega)}(0,d)e^{-\mathbf{R}^{\beta}d}=\lim_{d \to -\infty}\mathbf{W}^{(\beta)}(-d)e^{-\mathbf{R}^{\beta}d}=  \mathbf{L^{\beta}} . $$
From Theorem \ref{Two-sided}(i), for $x>0$,
$$\mathbb{E}\left[e^{-\int_0^{\tau_x^+}\omega_{J_s}(X_s)ds}, \tau_x^+<\tau_d^-, J_{\tau_x^+}|J_0 \right]\mathcal{W}^{(\omega)}(x,d)= \mathcal{W}^{(\omega)}(0,d).$$
Since the above expectation is increasing with respect to $d$ the following limit is well-defined and finite for every $x>d$:
\begin{align*}
\lim_{d \to -\infty} &\mathbb{E}\left[e^{-\int_0^{\tau_x^+}\omega_{J_s}(X_s)ds}, \tau_x^+<\tau_d^-, J_{\tau_x^+}|J_0 \right]
 \mathcal{W}^{(\omega)}(x,d)e^{-\mathbf{R}^{\beta}d}\\=&
\mathbb{E}\left[e^{-\int_0^{\tau_x^+}\omega_{J_s}(X_s)ds}, \tau_x^+<\infty, J_{\tau_x^+}|J_0 \right]\lim_{d \to -\infty}
\mathcal{W}^{(\omega)}(x,d)e^{-\mathbf{R}^{\beta}d}=\mathbf{L^{\beta}}.
\end{align*}
Note also that, since matrix $\mathbf{L^{\beta}}$ is invertible as it was note above Equation \eqref{lim_W},
from above equation it follows that the matrix
$\lim_{d \to -\infty}
\mathcal{W}^{(\omega)}(x,d)e^{-\mathbf{R}^{\beta}d}$ is also invertible.
Taking
$$\mathcal{H}^{(\omega)}(x):=\lim_{d \to -\infty} \mathcal{W}^{(\omega)}(x,d) e^{-\mathbf{R}^{\beta}d} (\mathbf{L^{\beta}})^{-1}.$$
completes the proof of the first part of the corollary.
To show that the above form of $\mathcal{H}^{(\omega)}(x)$ satisfies (\ref{matrixH}), note that
$$ \mathcal{W}^{(\omega)}(x,d) e^{-\mathbf{R}^{\beta}d}=\left(\mathbf{W}^{(\beta)}(x-d)+\int_{0}^{x}
\mathbf{W}^{(\beta)}(x-z) (\boldsymbol{\omega}(z)- \beta \mathbf{I})\mathcal{W}^{(\omega)}(z,d)dz \right) e^{-\mathbf{R}^{\beta}d}. $$
Then by taking the limit $d \to -\infty$ and applying the dominated convergence theorem the result follows.
\halmos

\subsubsection{Proof of the case $(ii)$}
The proof follows by taking the limit (\ref{Bx}), which exists and is finite. 
Moreover, 
the limit
$$\lim_{c \to \infty} \mathcal{W}^{(\omega)}(c)^{-1}\mathcal{Z}^{(\omega)}(c)
=\mathbf{C}_{\mathcal{W}(\infty)^{-1}\mathcal{Z}(\infty)}=\mathbf{C}_{B^{(\omega)}} $$
is by \eqref{CBomega} finite.
This completes the proof.
\halmos

\subsection{Proof of Theorem \ref{Res}}
\subsubsection{Proof of the case $(i)$}
 Using Lemma \ref{lemk}, we have
\begin{align}
\boldsymbol{U}^{(\omega)}_{(d,c)}\boldsymbol{f}(x):&=\int_0^{\infty} \mathbb{E}_{x}\left[f_{J_t}(X_t)\exp\left( -\int_0^{t} \omega_{J_s}(X_s)ds \right), t<\tau_d^- \wedge \tau_c^+, J_t|J_0 \right] dt \nonumber\\
&=\int_d^{c}  \boldsymbol{U}_{(d,c)}(x,dy) \left ( \boldsymbol{f}(y)-\boldsymbol{\omega}(y)\boldsymbol{U}_{(d,c)}^{(\omega)}\boldsymbol{f}(y)  \right), \label{Uwf}
\end{align}
where $\boldsymbol{U}_{(d,c)}(x,dy)$ is the potential measure of the MAP without $\omega$-killing, given in Theorem 1 of Ivanovs \cite{I14}:
\[
\boldsymbol{U}_{(d,c)}(x,dy)=\left( \mathbf{W}(x-d)\mathbf{W}(c-d)^{-1}\mathbf{W}(c-y)-\mathbf{W}(x-y) \right)dy.
\]

Hence, we can rewrite Equation (\ref{Uwf}) as
\[
\boldsymbol{U}^{(\omega)}_{(d,c)}\boldsymbol{f}(x)= \mathbf{W}(x-d) \mathbf{C}_U -\int_d^x \mathbf{W}(x-y)\boldsymbol{f}(y) dy
+\int_d^x \mathbf{W}(x-y) \boldsymbol{\omega}(y)\boldsymbol{U}_{(d,c)}^{(\omega)}\boldsymbol{f}(y)dy,
\]
where
\[
\mathbf{C}_U=\int_d^c \mathbf{W}(c-d)^{-1}\mathbf{W}(c-y)  \left (  \boldsymbol{f}(y)- \boldsymbol{\omega}(y)\boldsymbol{U}_{(d,c)}^{(\omega)}\boldsymbol{f}(y)  \right)dy.
\]

Multiplying Equation (\ref{Woxy}) by $\mathbf{C}_U$ gives that
\[
\mathcal{W}^{(\omega)}(x,d)\mathbf{C}_U =\mathbf{W}(x-d)\mathbf{C}_U+\int_d^{x} \mathbf{W}(x-y)\boldsymbol{\omega}(y)\mathcal{W}^{(\omega)}(y,d)\mathbf{C}_U dy,
\]
and define the operator $\mathcal{R}^{(\omega)} \boldsymbol{f}(x):=\int_d^{x}\mathcal{W}^{(\omega)}(x,y)\boldsymbol{f}(y)dy$, which leads to
\[
\mathcal{R}^{(\omega)} \boldsymbol{f}(x)=\int_d^x \mathbf{W}(x-y)\boldsymbol{f}(y) dy+\int_{d}^{x} \mathbf{W}(x-y)\boldsymbol{\omega}(y)\mathcal{R}^{(\omega)} \boldsymbol{f}(y) dy.
\]
Therefore, by the uniqueness property in Lemma \ref{lem unique}, we have
\[
\boldsymbol{U}_{(d,c)}^{(\omega)}\boldsymbol{f}(x)=\mathcal{W}^{(\omega)}(x,d)\mathbf{C}_U-\mathcal{R}^{(\omega)} \boldsymbol{f}(x).
\]
To find the constant matrix $\mathbf{C}_U$, we use the boundary condition $\boldsymbol{U}^{(\omega)}_{(d,c)}\boldsymbol{f}(c)=0$.
One completes the proof by denoting the density of $\boldsymbol{U}^{(\omega)}_{(d,c)}\boldsymbol{f}(x)$ as $\boldsymbol{U}^{(\omega)}_{(d,c)}(x,dy)$.
\halmos

\subsubsection{Proof of the case $(ii)$}
This identity follows directly from Theorem \ref{Res} $(i)$ by taking the limit and using (\ref{lim_W})
together with the dominated convergence theorem.
\halmos

\subsubsection{Proof of the case $(iii)$}
The formula follows by taking the limit $\lim_{d \to -\infty}$ in Theorem \ref{Res} $(i)$ and then using (\ref{one-sided_(i)}).
\halmos

\subsubsection{Proof of the case $(ii)$}
This identity follows from Theorem \ref{Res} $(iii)$ by taking the limit $c \to \infty$.
Since $\mathcal{H}^{(\omega)}(c)^{-1}\mathcal{W}^{(\omega)}(c,y)$ is monotonic of $c$ then the result holds.
\halmos

\section*{Acknowledgements}
I. Czarna is partially supported by the National Science Centre Grant No. 2015/19/D/ST1/01182.
A. Kaszubowski is partially supported by the National Science Centre Grant \linebreak No. 2015/17/B/ST1/01102.
Z. Palmowski is partially supported by the National Science Centre Grant No. 2016/23/B/HS4/00566.
S. Li acknowledges the support from a start-up grant from the University of Illinois at Urbana-Champaign.

\end{document}